\newcommand{\tuple}[1]{\vec{#1}}
\newcommand {\indep}[3] {#2 ~\bot_{#1}~ #3}
\newcommand {\indepc}[2] {#1 ~\bot~ #2}
\newcommand{\Dom}{\textrm{Dom}}
\newcommand{\Fr}{\rm Fr}
\newcommand{\Q}{\mathbb{Q}}
\newcommand{\Po}{\mathcal{P}}
\newcommand{\M}{\mathcal{M}}
\newcommand{\on}{\exists}
\newcommand{\ja}{\wedge}
\newcommand{\tai}{\vee}
\renewcommand{\a}{\alpha}
\renewcommand{\b}{\beta}
\newcommand{\g}{\gamma}
\newcommand{\D}{\mathcal{D}}
\def\dep{=\!\!}
\def\dep{=\!\!}
\newcommand{\FO}{{\rm FO}}
\newcommand{\ESO}{{\rm ESO}}
\theoremstyle{plain}
\newtheorem{thm}[equation]{Theorem}
\newtheorem{lem}[equation]{Lemma}
\newtheorem{prop}[equation]{Proposition}
\newtheorem{cor}[equation]{Corollary}
\newtheorem{defi}[equation]{Definition}
\newcommand{\ESOfarity}[1]{{\rm ESO}_f({#1}\mbox{\rm-ary})}
\newcommand{\ESOfvar}[1]{{\rm ESO}_f({#1}\forall)}
\newcommand{\dforall}[1]{\mathcal{D}({#1}\forall)}
\newcommand{\ddep}[1]{\mathcal{D}({#1}\mbox{\rm-dep})}
\newcommand{\RAM}{{\rm RAM}}
\newcommand{\NTIME}{{\rm NTIME}}
\newcommand{\PTIME}{{\rm PTIME}}
\newcommand{\ind}[1]{\FO (\bot_{\rm c})({#1}\mbox{\rm-ind})}
\newcommand{\indNR}[1]{\FO (\bot)({#1}\mbox{\rm-ind})}
\newcommand{\inc}[1]{\FO (\subseteq)({#1}\mbox{\rm-inc})}
\newcommand{\indNRforall}[1]{\FO (\bot) ({#1}\forall)}
\newcommand{\indNRincforall}[1]{\FO (\bot,\subseteq) ({#1}\forall)}
\newcommand{\indlogic}{\FO (\bot_{\rm c})}
\newcommand{\inclogic}{\FO (\subseteq)}
\newcommand{\indNRlogic}{\FO (\bot)}
\newcommand{\indNRinclogic}{\FO (\bot,\subseteq)}
\begin{document}

\author{Pietro Galliani\thanks{Department of Mathematics and Statistics, University of Helsinki, Finland. \texttt{pgallian@gmail.com}} \and Miika Hannula \thanks{Department of Mathematics and Statistics, University of Helsinki, Finland. \texttt{miika.hannula@helsinki.fi}} \and Juha Kontinen\thanks{Department of Mathematics and Statistics, University of Helsinki, Finland. \texttt{juha.kontinen@helsinki.fi}}}
\title{Hierarchies in independence logic\thanks{The authors were supported by grant 264917 of the Academy of Finland.}}

\maketitle

\begin{abstract}
We study the expressive power of  fragments of inclusion and independence logic defined either by restricting the number of universal quantifiers or the arity of inclusion and independence atoms in formulas. Assuming the so-called lax semantics for these logics,  we relate these fragments of inclusion and independence logic to familiar sublogics of existential second-order logic. We also show that, with respect to the stronger strict semantics, inclusion logic is equivalent to existential second-order logic. 
\end{abstract}

\section{Introduction}

Independence logic \cite{gradel10} and inclusion logic \cite{galliani12} are recent variants of dependence logic. Dependence logic \cite{vaananen07}  extends first-order logic by dependence atomic formulas 
\begin{equation}\label{da}\dep(x_1,\ldots,x_n)
\end{equation} the meaning of which is that the value of $x_n$ is completely determined by the values of $x_1,\ldots, x_{n-1}$. 
The semantics of dependence logic is defined using sets of assignments rather than a single assignment as in first-order logic.  
Independence logic replaces the dependence atoms by independence atoms 
$\vec{y}\bot_{\vec{x}} \vec{z}$,
 the intuitive meaning of which is that, with respect to any fixed value of  $ \vec x$, the variables $\vec y$  are totally independent of the variables $\vec z$.
In inclusion logic dependence atoms are replaced by inclusion atoms
$ \vec{x}\subseteq \vec{y},$
meaning that all the values of $\vec{x}$ appear also as values for $\vec{y}$. We study the expressive power of the syntactic fragments of these logics defined either by restricting the number of universal quantifiers or the arity of the independence and inclusion atoms in sentences. These results are proved with respect to lax semantics. We also show that, under strict semantics, inclusion logic is equivalent to existential second-order logic $\ESO$ while, by a recent result of Hella and Galliani \cite{gallhella13}, with lax semantics inclusion logic is equivalent to greatest fixed point logic, and hence to LFP (and $\PTIME$) over finite (ordered) structures.

Since the introduction of dependence logic ($\D$) in 2007 many interesting variants of it have been introduced. In fact the  team semantics of dependence logic has turned  into a general framework for logics in which various notions of dependence and independence can be formalized. Dependence logic has a very intimate and well understood connection to $\ESO$  dating back to the results of \cite{henkin61,enderton70, walkoe70} on Henkin quantifiers. For some of the  new variants and concepts in this area the correspondence to $\ESO$ does not hold. We briefly mention some related work on the  complexity theoretic aspects of these logics:

\begin{itemize}
\item The extension of dependence logic by so-called intuitionistic implication $\rightarrow$ (introduced in \cite{abramsky09}) increases the expressive power of dependence logic to full second-order logic \cite{yang13}. 
\item The model checking problem of full dependence logic, and many of its variants, was recently shown to be NEXPTIME-complete. In fact, for any variant of dependence logic whose atoms are PTIME-computable, the corresponding model checking problem is contained in NEXPTIME  \cite{gradel12}. 
\item The non-classical interpretation of disjunction in dependence logic has the effect that the model checking problems of $\phi_1:=\ \dep(x,y)\vee\dep(u,v)$ and $\phi_2:=\ \dep(x,y)\vee\dep(u,v)\vee \dep(u,v)$ are already  NL-complete and NP-complete, respectively \cite{kontinenj13}. 
\end{itemize}
While dependence logic and independence logic are both equivalent to $\ESO$ in expressive power \cite{vaananen07,gradel10}, for inclusion logic only containment in $\ESO$ has been shown \cite{galliani12}. Furthermore, the expressive power of various natural syntactic fragments of independence and inclusion logics is not understood at the moment. The starting point of our work were the results of \cite{durand11} on the fragments $\dforall{k}$ and  $\ddep{k}$ of dependence logic. The fragment $\dforall{k}$ contains those $\D$-formulas  in which at most $k$ variables have been universally quantified, and in the formulas of $\ddep{k}$  dependence atoms of arity at most $k$ may appear (atoms of the form $\dep(x_1,\ldots,x_n)$ satisfying $n\le k+1$). The following results were shown in  \cite{durand11}: 
\begin{enumerate}
\item \label{arityH}  $\ddep{k}= \ESOfarity{k}$,
\item\label{forallH} $\dforall{k} \le \ESOfvar{k}\le  \dforall{2k}$
\end{enumerate}
where $\ESOfarity{k}$ is  the fragment  of $\ESO$ in which the quantified functions and relations have arity at most $k$, and 
$\ESOfvar{k}$  consists of $\ESO$-sentences that are in Skolem Normal Form and contain at most  $k$ universal first-order quantifiers. The equivalence  in \eqref{arityH} was used to show that in $\ddep{k}$  even cardinality of a $k+1$-ary relation cannot be expressed using the result of Ajtai \cite{ajtai83}. On the other hand,  since 
\[\ESOfvar{k} =\NTIME_{\RAM}(n^k)< \NTIME_{\RAM}(n^{k+1})\]
by \cite{grandjean04} and  \cite{cook72},   an infinite expressivity hierarchy for the fragments  $\dforall{k}$  was shown using  \ref{forallH}. Above $\NTIME_{\RAM}(n^k)$ denotes the family  of classes of $\tau$-structures that can be recognized by a non-deterministic RAM in time $O(n^k)$.  

In \cite{galliani12} it was observed that independence logic and inclusion logic can be given two alternative semantics called strict and lax semantics. For dependence logic these two semantics coincide in the sense that the meaning of any $\D$-formula is the same under both interpretations. For independence and inclusion logic formulas this is not the case as shown in  \cite{galliani12}. In fact, we will show that,  with respect to strict semantics,  inclusion logic is equivalent to $\ESO$, while by a recent result of Hella and Galliani \cite{gallhella13}, with lax semantics inclusion logic is equivalent to greatest fixed point logic. In the rest of the article we consider the expressive power of fragments of independence logic and inclusion logic with lax semantics. First we look at fragments  defined analogously to $\ddep{k}$ of dependence logic. We let $\ind{k}$ contain those independence logic sentences in which independence atoms with at most $k+1$ different variables may appear. Similarly in the sentences of $\inc{k}$ only  inclusion atoms of the form  $\vec{a}\subseteq \vec{b}$, where $|\vec{a}|=|\vec{b}|\leq k$ may appear. Our results show that  
\[ \inc{k}\leq \ESOfarity{k}=\ind{k}. \]
Then we consider the analogoues of $\dforall{k}$ in the case of  $\indNRlogic = \indlogic$ \cite{vaananen13}, which is the  sublogic of independence logic allowing only so-called pure atoms  $\vec{y}\bot \vec{z}$, and  $\indNRinclogic$. We show that 
\begin{itemize}
\item  $\indNRforall{2} =  \indNRlogic$, 
\item  $\indNRincforall{1} = \indNRinclogic$.
\end{itemize}

This article is organized as follows. In Section 2 we  review some basic properties and results regarding dependence logic and its variants. In Section 3 we compare the strict and lax semantics and in Section 4.1  relate the arity  fragments of independence logic and inclusion logic  with that of $\ESO$. Finally,  in Section 4.2 we consider fragments defined by restricting the number of universally quantified variables.









\section{Preliminaries}
\subsection{Team Semantics}
Team semantics is a generalization of Tarski semantics in which formulas are interpreted  by \emph{sets} of assignments, called \emph{teams}, rather than by single assignments. 
In this subsection, we will recall the definition of team semantics for first order logic. We will assume that all our formulas are in negation normal form. Also, all structures considered in the paper are assumed to have at least two elements. 

\begin{defi}

Let $\M$ be a first-order model and $V$  a finite set of variables. Then
\begin{itemize}
\item a \emph{team} $X$ over $\M$ with domain $\Dom(X) = V$ is a finite set of assignments from $V$ to the domain $M$ of $\M$; 

\item for a tuple $\tuple v$ of variables in $V$, we write $X(\tuple v)$ for the set $\{s(\tuple v) : s \in X\}$ of all values that $\tuple v$ takes in $X$, where $s(\tuple v):=( s(v_1),\ldots,s(v_n))$;
\item for a subset $W$ of $V$, we write $X \upharpoonright W$ for the team obtained by restricting all assignments of $X$ to the variables in $W$.
\item For a formula $\phi$, the set of free variables of $\phi$ is denoted by $\Fr(\phi)$.
\end{itemize}
\end{defi}
There exist two variants of team semantics, called respectively \emph{strict} and \emph{lax}, which differ with respect to the interpretation of disjunction and existential quantification. Informally speaking, the choice between strict and lax semantics corresponds to the choice between disallowing or allowing \emph{nondeterministic strategies} in the corresponding semantic games.\footnote{See \cite{galliani12c} and \cite{gradel12} for details.} 

We first give the definition of the lax version of team semantics; later, we will discuss some of the ways in which strict  semantics differs from it. 

\begin{defi}[Team Semantics]
Let $\M$ be any first-order model and let $X$ be any team over it. Then 
\begin{description}
\item[TS-lit:] For all first-order literals $\alpha$, $\M \models_X \alpha$ if and only if, for all $s \in X$, $\M \models_s \alpha$ in the usual Tarski semantics sense; 
\item[TS-$\vee$:] For all $\psi$ and $\theta$, $\M \models_X \psi \vee \theta$ if and only if $X = Y \cup Z$ for two subteams $Y$ and $Z$ such that $\M \models_Y \psi$ and $\M \models_Z \theta$; 
\item[TS-$\wedge$:] For all $\psi$ and $\theta$, $\M \models_X \psi \wedge \theta$ if and only if $\M \models_X \psi$ and $\M \models_X \theta$; 
\item[TS-$\exists$:] For all $\psi$ and all variables $v$, $\M \models_X \exists v \psi$ if and only if there exists a function $H : X \rightarrow \mathcal P(M) \backslash \{\emptyset\}$ such that $\M \models_{X[H/v]} \psi$, where $X[H/v] = \{s[m/v] : s \in X, m \in H(s)\}$; 
\item[TS-$\forall:$] For all $\psi$ and all variables $v$, $\M \models_X \forall v \psi$ if and only if $\M \models_{X[M/v]} \psi$, where $X[M/v] = \{s[m/v] : s \in X, m \in M\}$.
\end{description}
If $\M \models_X \phi$, we say that $X$ \emph{satisfies} $\phi$ in $\M$; and if a sentence (that is, a formula with no free variables) $\phi$ is satisfied by the team $\{\emptyset\}$,\footnote{$\{\emptyset\}$ is the team containing the empty assignment. Of course, this is different from the \emph{empty team} $\emptyset$, containing no assignments.} we say that $\phi$ is \emph{true} in $\M$ and we write $\M \models \phi$.
\end{defi}
In the team semantics setting, formulas $\phi$ and $\psi$ are said to be logically equivalent, $\phi\equiv \psi$, 
if for all models $\M$ and teams $X$, with $\Fr(\phi)\cup \Fr(\psi)\subseteq \Dom(X)$, 
$\M\models_X\phi \Leftrightarrow   \M\models_X\psi$.
Logics $\mathcal{L}$ and  $\mathcal{L}'$ are said to be equivalent, $\mathcal{L}=\mathcal{L}'$, if  every $\mathcal{L}$-sentence $\phi$ is equivalent to some $\mathcal{L}'$-sentence $\psi$, and vice versa. 

The following result can be proved by structural induction on the formula $\phi$: 
\begin{thm}[Flatness]\label{flatness}
For all first order formulas $\phi$ and all suitable models $\M$ and teams $X$, the following are equivalent: 
\begin{enumerate}
\item $\M \models_X \phi$; 
\item For all $s \in X$, $\M \models_{{\{s\}}} \phi$; 
\item For all $s \in X$, $\M \models_s \phi$ according to Tarski semantics. 
\end{enumerate}
\end{thm}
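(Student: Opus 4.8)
The plan is to prove the equivalence by induction on the structure of $\phi$, establishing directly the equivalence of (1) and (3) for \emph{arbitrary} teams $X$, not merely for singletons. The equivalence with (2) then comes for free: applying the main equivalence to a singleton team $\{s\}$ gives $\M \models_{\{s\}} \phi \Leftrightarrow \M \models_s \phi$, since the only assignment in $\{s\}$ is $s$ itself, and (2) is just the assertion of this for every $s \in X$. It is essential that the inductive claim quantify over \emph{all} teams, because the semantic clauses TS-$\vee$ and TS-$\exists$ refer to subteams and to expanded teams $X[H/v]$ that are in general not singletons, so an induction hypothesis stated only for single assignments would not be applicable.

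First I would dispatch the base case of a first-order literal $\alpha$, where (1) $\Leftrightarrow$ (3) is immediate from clause TS-lit. For a conjunction $\psi \wedge \theta$, clause TS-$\wedge$ together with the induction hypothesis for $\psi$ and for $\theta$ shows that $\M \models_X \psi \wedge \theta$ holds iff every $s \in X$ Tarski-satisfies both conjuncts, i.e. iff every $s$ satisfies $\psi \wedge \theta$. The universal case $\forall v\, \psi$ is equally direct: by TS-$\forall$ and the induction hypothesis applied to the team $X[M/v]$, satisfaction reduces to $\M \models_{s[m/v]} \psi$ for every $s \in X$ and every $m \in M$, which is precisely Tarski satisfaction of $\forall v\, \psi$ at each $s$.

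The two cases requiring genuine, if light, work are disjunction and the existential quantifier. For $\psi \vee \theta$, the forward direction starts from a splitting $X = Y \cup Z$ with $\M \models_Y \psi$ and $\M \models_Z \theta$; the induction hypothesis then gives that each $s \in Y$ Tarski-satisfies $\psi$ and each $s \in Z$ Tarski-satisfies $\theta$, so every $s \in X$ satisfies $\psi \vee \theta$. For the converse I would use the canonical split $Y = \{ s \in X : \M \models_s \psi \}$ and $Z = \{ s \in X : \M \models_s \theta \}$, noting that under lax semantics $X = Y \cup Z$ is permitted even when the two subteams overlap (an $s$ satisfying both disjuncts simply lands in both), so no disjointness bookkeeping is needed; the induction hypothesis then yields $\M \models_Y \psi$ and $\M \models_Z \theta$. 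For $\exists v\, \psi$, the forward direction exploits that each value $H(s)$ is nonempty to extract, for every $s$, a witness $s[m/v] \in X[H/v]$, which by the induction hypothesis Tarski-satisfies $\psi$; the backward direction chooses for each $s$ a Tarski-witness $m_s$ and sets $H(s) = \{ m_s \}$, so that $X[H/v]$ satisfies $\psi$ by the induction hypothesis and hence $\M \models_X \exists v\, \psi$.

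The one point I would flag as the thing to get right, rather than a real obstacle, is the treatment of the empty team. When no assignment of $X$ satisfies, say, $\theta$, the canonical split leaves $Z = \emptyset$, and one must know that $\M \models_\emptyset \theta$; the empty team also furnishes the base of the whole induction in the degenerate case $X = \emptyset$. This \emph{empty team property} — that every first-order formula is satisfied by $\emptyset$ — is verified by the very same induction, since each semantic clause is met vacuously or trivially by $\emptyset$ (for instance $H : \emptyset \to \mathcal P(M) \setminus \{\emptyset\}$ is the empty function and $\emptyset[M/v] = \emptyset$). I would therefore either record it as a preliminary remark or simply fold it into the induction, after which all cases close without difficulty.
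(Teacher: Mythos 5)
Your proof is correct and takes exactly the approach the paper indicates: the paper offers no details beyond stating that Theorem \ref{flatness} ``can be proved by structural induction on the formula $\phi$,'' and your argument carries out precisely that induction, with the right strengthening (proving (1)$\Leftrightarrow$(3) for arbitrary teams so the hypothesis applies to splits and to $X[H/v]$) and correct handling of the only delicate points, namely the lax overlapping split for $\vee$, the singleton choice functions for $\exists$, and the empty team.
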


\subsection{Dependencies in Team Semantics}
The advantage of team semantics, and the reason for its development, is that it allows us to extend first-order logic by new atoms 
and operators. 
For the purposes of this paper, the following atoms, inspired by database-theoretic \emph{dependency notions}\footnote{More precisely, dependence atoms correspond to functional dependencies \cite{armstrong74}, independence atoms to embedded multivalued dependencies and conditional dependency conditions as in  \cite{geiger91,naumovre}, and inclusion atoms to inclusion dependencies \cite{fagin81,casanova82}.
}, are of particular interest:
\begin{defi}
\begin{itemize}
\item Let $\tuple x$ be a tuple of variables and let $y$ be another variable. Then $=\!\!(\tuple x, y)$ is a \emph{dependence atom}, with the semantic rule
\begin{description}
\item[TS-dep:] $\M \models_X \dep(\tuple x, y)$ if and only if any two $s, s' \in X$ which assign the same value to $\tuple x$ also assign the same value to $y$;
\end{description}
\item Let $\tuple x$, $\tuple y$, and $\tuple z$ be tuples of variables (not necessarily of the same length). Then $\indep{\tuple x}{\tuple y}{\tuple z}$ is a \emph{conditional independence atom}, with the semantic rule
\begin{description}
\item[TS-ind:] $\M \models_X \indep{\tuple x}{\tuple y}{\tuple z}$ if and only if for any two $s, s' \in X$ which assign the same value to $\tuple x$ there exists a $s'' \in X$ which agrees with $s$ with respect to $\tuple x$ and $\tuple y$ and with $s'$ with respect to $\tuple z$.
\end{description}
Furthermore, we will write $\indepc{\tuple x}{\tuple y}$ as a shorthand for $\indep{\emptyset}{\tuple x}{\tuple y}$, and  call it a \emph{pure independence atom};
\item Let $\tuple x$ and $\tuple y$ be two tuples of variables of the same length. Then $\tuple x \subseteq \tuple y$ 
is an \emph{inclusion atom}, with the semantic rule 
\begin{description}
\item[TS-inc:] $\M \models_X \tuple x \subseteq \tuple y$ if and only if $X(\tuple x) \subseteq X(\tuple y)$; 
\end{description}
\end{itemize}
\end{defi}
Given a collection $\mathcal C \subseteq \{=\!\!(\ldots), \bot_{\rm c}, \subseteq\}$ of atoms, we will write $\FO(\mathcal C)$ (omitting the set parenthesis of $\mathcal{C}$) for the logic obtained by adding them to the language of first-order logic. With this notation dependence logic, independence logic and inclusion logic are denoted by $\FO (\dep(\ldots))$, $\indlogic$ and $\FO(\subseteq)$, respectively. 
We will also write $\indNRlogic$ for the fragment of independence logic containing only pure independence atoms.

All formulas of all the above-mentioned logics satisfy the two following properties:
\begin{prop}[Empty Team Property]
\label{thm:etp}
For all models $\M$ and $\phi \in \FO(=\!\!(\ldots), \bot_{\rm c}, \subseteq)$ over the signature of $\M$, $\M \models_\emptyset \phi$. 
\end{prop}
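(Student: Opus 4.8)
The plan is to prove the claim by structural induction on $\phi$, showing that the empty team $\emptyset$ satisfies every atom and that each connective and quantifier clause of the lax semantics preserves satisfaction by $\emptyset$. The guiding observation is that $\emptyset$ contains no assignments, so every atomic condition phrased as a (universally quantified) statement over assignments or pairs of assignments in the team holds vacuously, while every team-building operation applied to $\emptyset$ returns $\emptyset$ again or can be witnessed by empty subteams.

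For the base cases, I would first treat the first-order literals: by \textbf{TS-lit}, $\M \models_\emptyset \alpha$ requires $\M \models_s \alpha$ for all $s \in \emptyset$, which holds vacuously. The dependence and independence atoms are handled identically, since \textbf{TS-dep} and \textbf{TS-ind} both quantify over pairs $s, s' \in \emptyset$, of which there are none, so the conditions hold vacuously. The inclusion atom is the only base case that is not purely vacuous: by \textbf{TS-inc} one needs $\emptyset(\tuple x) \subseteq \emptyset(\tuple y)$, and since $\emptyset(\tuple x) = \{ s(\tuple x) : s \in \emptyset \} = \emptyset$ and likewise $\emptyset(\tuple y) = \emptyset$, this reduces to the trivial $\emptyset \subseteq \emptyset$.

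For the inductive step, assume the claim for the immediate subformulas. Conjunction is immediate from \textbf{TS-$\wedge$} and the induction hypothesis. For \textbf{TS-$\forall$} I would observe that $\emptyset[M/v] = \{ s[m/v] : s \in \emptyset, m \in M \} = \emptyset$, so $\M \models_{\emptyset[M/v]} \psi$ follows from the induction hypothesis applied to $\emptyset$. For \textbf{TS-$\vee$}, the decomposition $\emptyset = \emptyset \cup \emptyset$ together with the induction hypotheses for $\psi$ and $\theta$ witnesses $\M \models_\emptyset \psi \vee \theta$. Finally, for \textbf{TS-$\exists$} I would take $H$ to be the empty function (the unique function with domain $\emptyset$); then $\emptyset[H/v] = \emptyset$, and the induction hypothesis gives $\M \models_{\emptyset} \psi$, as required.

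I do not expect any genuine obstacle, as the argument is a routine induction. The one point deserving care is the existential clause, where the lax semantics requires the witnessing function to have nonempty images, $H : X \rightarrow \mathcal P(M) \setminus \{\emptyset\}$; one must simply note that this constraint is vacuous when $X = \emptyset$, so the empty function is a legitimate choice and again produces the empty team. Everything else reduces to the fact that the empty team makes the quantified atomic conditions vacuously true and is a fixed point of the team operations involved.
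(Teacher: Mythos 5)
Your proof is correct and complete: the paper states this proposition without proof (it is a standard fact in team semantics), and your structural induction — vacuous truth of all atomic clauses over the empty team, $\emptyset = \emptyset \cup \emptyset$ for disjunction, and $\emptyset[M/v] = \emptyset[H/v] = \emptyset$ for the quantifiers — is exactly the routine argument the paper implicitly relies on. Your attention to the existential clause, noting that the nonemptiness constraint on $H : X \rightarrow \mathcal{P}(M) \setminus \{\emptyset\}$ is vacuous when $X = \emptyset$, is precisely the one point where care is needed.
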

\begin{prop}[Locality]
\label{thm:loc}
Let $\phi$ be a formula of $\FO(=\!\!(\ldots), \bot_{\rm c}, \subseteq)$ whose free variables $\Fr (\phi)$ are contained in $V$. Then, for all models $\M$ and teams $X$, $\M \models_X \phi$ if and only if $\M \models_{X\upharpoonright V} \phi$.
\end{prop}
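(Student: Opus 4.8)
The plan is to prove the biconditional by structural induction on $\phi$, establishing both directions simultaneously and with the inductive statement quantified over \emph{all} finite $V \supseteq \Fr(\phi)$; this uniformity in $V$ is what permits $V$ to expand to $V' = V \cup \{v\}$ when passing under a quantifier. For the base case I would treat first-order literals together with the dependence, independence, and inclusion atoms. In each case the relevant rule (TS-lit, TS-dep, TS-ind, TS-inc) speaks only of the values assigned to variables occurring in the atom, all of which are free and hence lie in $V$; since $s(\tuple w) = (s\upharpoonright V)(\tuple w)$ for every tuple $\tuple w$ of variables in $V$, satisfaction is plainly unchanged by the restriction, so $\M \models_X \phi \iff \M \models_{X \upharpoonright V} \phi$.

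In the inductive step the conjunction case is immediate from $\Fr(\psi \wedge \theta) = \Fr(\psi) \cup \Fr(\theta)$ and the induction hypothesis applied to each conjunct over the same $V$. For disjunction the forward direction is the easy observation that $X \upharpoonright V = (Y \upharpoonright V) \cup (Z \upharpoonright V)$ whenever $X = Y \cup Z$. The direction from $X \upharpoonright V$ back to $X$ requires a little care: given a witnessing split $X \upharpoonright V = Y' \cup Z'$, I would lift it by taking preimages under the restriction map, setting $Y = \{s \in X : s\upharpoonright V \in Y'\}$ and $Z = \{s \in X : s\upharpoonright V \in Z'\}$, then checking $X = Y \cup Z$ and $Y \upharpoonright V = Y'$, $Z \upharpoonright V = Z'$, so that the induction hypothesis transfers satisfaction of $\psi$ and $\theta$ to $Y$ and $Z$.

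The universal quantifier case reduces to the identity $X[M/v]\upharpoonright V' = (X \upharpoonright V)[M/v]$ with $V' = V \cup \{v\}$, after which the induction hypothesis for $\psi$ over $V'$ closes the argument. The existential quantifier is the genuine obstacle, precisely because under lax semantics the restriction map $s \mapsto s\upharpoonright V$ is many-to-one, so a witnessing function $H : X \to \mathcal P(M)\setminus\{\emptyset\}$ cannot simply be reused on $X \upharpoonright V$. My plan is to aggregate $H$ along the fibres of the restriction: define $H'$ on $X\upharpoonright V$ by $H'(t) = \bigcup\{H(s) : s \in X,\ s\upharpoonright V = t\}$, which is nonempty, and then verify the key identity $X[H/v]\upharpoonright V' = (X\upharpoonright V)[H'/v]$ using $(s[m/v])\upharpoonright V' = (s\upharpoonright V)[m/v]$. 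The induction hypothesis for $\psi$ over $V'$ then yields $\M \models_{X\upharpoonright V}\exists v\,\psi$ from $\M \models_X \exists v\,\psi$, and the converse proceeds symmetrically, lifting a given function on $X\upharpoonright V$ to one on $X$ along the same fibres.

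I expect the fibre-aggregation identity in the existential case, together with bookkeeping of how the variable set grows from $V$ to $V'$ under each binder, to be the only nontrivial points; the remaining cases are routine once the induction hypothesis is stated uniformly in $V$.
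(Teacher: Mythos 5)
Your proof is correct: the base cases, the preimage lifting $Y=\{s\in X: s\upharpoonright V\in Y'\}$ for disjunction, the commutation identity $X[M/v]\upharpoonright V' = (X\upharpoonright V)[M/v]$, and the fibre aggregation $H'(t)=\bigcup\{H(s) : s\in X,\ s\upharpoonright V = t\}$ for the lax existential all check out, and stating the induction hypothesis uniformly in $V$ is exactly the right device. The paper itself gives no proof of this proposition (it is imported from \cite{galliani12}), but the standard argument there is precisely this structural induction; note also that your fibre-aggregation step is exactly where lax semantics is essential, which is consistent with the paper's Section 3, where locality is shown to fail under strict semantics (Theorem \ref{outo}).
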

Furthermore, we have the two following results for dependence logic: 
\begin{prop}[Downwards Closure]
\label{thm:dc}
For all models $\M$, dependence logic formulas $\phi$ and teams $X$, if $\M \models_X \phi$ then $\M \models_Y \phi$ for all $Y \subseteq X$.
\end{prop}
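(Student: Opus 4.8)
The plan is to prove the statement by structural induction on $\phi$, showing at each step that satisfaction is preserved under passing to subteams. The conceptual point behind the whole argument is that dependence logic contains only first-order literals and dependence atoms as its atomic formulas, and both kinds of atom are downward closed by inspection of \textbf{TS-lit} and \textbf{TS-dep}; it is precisely the absence of inclusion atoms (which are not downward closed, since $X(\tuple x)\subseteq X(\tuple y)$ may fail after deleting assignments) that makes the induction go through. So I would first dispose of the base cases and then check that each connective propagates downward closure.

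For the base cases, suppose $Y \subseteq X$. If $\phi$ is a first-order literal $\alpha$ and $\M \models_X \alpha$, then by \textbf{TS-lit} we have $\M \models_s \alpha$ for every $s \in X$, hence in particular for every $s \in Y$, so $\M \models_Y \alpha$. If $\phi$ is a dependence atom $\dep(\tuple x, y)$, then any two assignments of $Y$ agreeing on $\tuple x$ are also assignments of $X$ and therefore agree on $y$, so the defining condition of \textbf{TS-dep} is inherited by $Y$. The conjunction and universal cases are equally direct: for $\psi \wedge \theta$ one applies the induction hypothesis to each conjunct, and for $\forall v\,\psi$ one observes that $Y[M/v] \subseteq X[M/v]$ and applies the induction hypothesis to $\psi$ at these teams.

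The two cases that require a small amount of care are disjunction and existential quantification. For $\psi \vee \theta$, from $\M \models_X \psi \vee \theta$ fix a splitting $X = X_1 \cup X_2$ with $\M \models_{X_1}\psi$ and $\M \models_{X_2}\theta$, and set $Y_1 = Y \cap X_1$ and $Y_2 = Y \cap X_2$; then $Y = Y_1 \cup Y_2$ because $Y \subseteq X$, and since $Y_1 \subseteq X_1$ and $Y_2 \subseteq X_2$ the induction hypothesis gives $\M \models_{Y_1}\psi$ and $\M \models_{Y_2}\theta$, whence $\M \models_Y \psi \vee \theta$. The existential case is the most delicate step, since one must produce a suitable choice function for $Y$: given $H \colon X \to \mathcal P(M)\setminus\{\emptyset\}$ witnessing $\M \models_{X[H/v]}\psi$, take the restriction $H\!\upharpoonright\!Y$, which is again a function into nonempty subsets of $M$ and therefore admissible. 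One then checks the monotonicity $Y[H\!\upharpoonright\!Y/v] \subseteq X[H/v]$ directly from the definition of $X[H/v]$, and applies the induction hypothesis to $\psi$ to obtain $\M \models_{Y[H\!\upharpoonright\!Y/v]}\psi$, i.e.\ $\M \models_Y \exists v\,\psi$. I expect this existential step to be where the argument must be stated most carefully: it is exactly here that lax semantics matters, since the witness sets remain nonempty under restriction, and it is the same monotonicity observation that would break down if an inclusion atom were allowed to occur.
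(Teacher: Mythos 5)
Your proof is correct and is precisely the standard structural induction on $\phi$ that the paper relies on; the paper itself states this proposition without proof, as a known fact about dependence logic, and every step of your argument (the two atomic cases, the intersection splitting for $\vee$, the monotonicity $Y[M/v] \subseteq X[M/v]$ and the restriction $H\!\upharpoonright\!Y$ for the quantifiers) is the expected one and is carried out correctly, including the implicit handling of empty subteams. One caveat about your closing remark: it is not true that lax semantics ``matters'' at the existential step --- the same restriction argument works verbatim under strict semantics (restrict the witness function $F : X \to M$ to $Y$, and note $Y[F\!\upharpoonright\!Y/v] \subseteq X[F/v]$), which is consistent with the paper's observation that strict and lax semantics coincide for dependence logic; what inclusion atoms would break is the atomic base case of the induction, not the monotonicity used in the quantifier steps.
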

\begin{thm}[\cite{walkoe70,enderton70,vaananen07}]\label{thm:depESO}
Any dependence logic sentence $\phi$ is logically equivalent to some $\ESO$ sentence $\phi^*$, and vice versa.
\end{thm}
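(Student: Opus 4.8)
The plan is to establish the two directions by explicit translations and to verify their correctness by induction on syntactic complexity. For $\D \le \ESO$ the natural device is to represent a team by a relation. Fix a formula $\phi$ whose free variables are among $\vec x = (x_1,\dots,x_n)$. By Locality (Proposition \ref{thm:loc}) the truth of $\phi$ in a team $X$ depends only on $X\upharpoonright \Fr(\phi)$, so it suffices to remember the relation $X(\vec x)=\{s(\vec x): s\in X\}$. I would therefore define, by structural induction, an $\ESO$ formula $\tau_\phi(R)$ with a fresh $n$-ary relation symbol $R$ such that, for every model $\M$ and every team $X$ with domain $\{x_1,\dots,x_n\}$, we have $\M\models_X\phi$ iff $\M\models\tau_\phi(R)$ when $R$ is interpreted as $X(\vec x)$.

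The inductive clauses write themselves. For a first-order literal $\alpha$, Flatness (Theorem \ref{flatness}) gives $\tau_\alpha(R):=\forall\vec x\,(R\vec x\rightarrow\alpha)$; the dependence atom $\dep(\vec y,z)$ becomes the assertion that any two tuples of $R$ agreeing on the $\vec y$-coordinates agree on the $z$-coordinate; conjunction translates to conjunction with a shared $R$; disjunction $\psi\vee\theta$ is handled by existentially quantifying two relations $R_1,R_2$ with $R=R_1\cup R_2$ and asserting $\tau_\psi(R_1)\wedge\tau_\theta(R_2)$; the quantifier $\exists v\,\psi$ is captured by an existentially quantified $(n+1)$-ary relation $S$ that projects onto $R$ and whose every $R$-tuple has at least one extension, together with $\tau_\psi(S)$; and $\forall v\,\psi$ by the relation $S$ with $S\vec x v \leftrightarrow R\vec x$. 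The relation $S$ in the existential clause matches the lax reading of TS-$\exists$ directly, and since $\D$ is downward closed (Proposition \ref{thm:dc}) one could equally take $S$ to be the graph of a Skolem function, reflecting that strict and lax semantics agree for $\D$. For a sentence the free-variable tuple is empty, $R$ is $0$-ary and codes the team $\{\emptyset\}$ as ``true'', so $\phi^*:=\tau_\phi$ with $R$ deleted is the required $\ESO$ sentence.

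For the converse $\ESO\le\D$, I would first bring an $\ESO$ sentence into Skolem normal form $\exists f_1\cdots\exists f_m\,\forall\vec x\,\theta$ with $\theta$ quantifier-free: second-order relation quantifiers are replaced by characteristic-function quantifiers and the first-order existentials are Skolemised into further functions. Nested function terms can be removed by repeatedly introducing a fresh universal variable $u$ with a guard, rewriting $f(g(\vec x))$ as $\forall u\,(u=g(\vec x)\rightarrow\cdots f(u)\cdots)$, until every function is applied to a tuple of variables. One then tries to simulate each function quantifier by the dependence-logic idiom $\forall\vec u\,\exists w\,(\dep(\vec u,w)\wedge\cdots)$: the universal quantifiers spread the team over the full grid, the dependence atom forces $w$ to be a genuine function of $\vec u$, and $\theta$ is evaluated pointwise by Flatness.

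The main obstacle is consistency across occurrences. If $f_i$ is applied at two distinct argument tuples, the naive translation introduces two variables $w,w'$ with atoms $\dep(\vec v,w)$ and $\dep(\vec v',w')$, and nothing forces the two induced functions to coincide; this genuinely alters the meaning. For instance the (false) sentence $\exists f\,\forall x_1\forall x_2\,f(x_1)\neq f(x_2)$ --- false because of the diagonal $x_1=x_2$ --- would be translated to $\forall x_1\forall x_2\,\exists w_1\exists w_2\,(\dep(x_1,w_1)\wedge\dep(x_2,w_2)\wedge w_1\neq w_2)$, which is satisfiable as soon as the universe has at least two elements, since $w_1,w_2$ may be drawn from two different functions. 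The resolution, and the technical heart of this direction, is to force every use of each $f_i$ to read off a single function: one encodes the graph of $f_i$ once, via a block $\forall\vec u_i\,\exists z_i\,\dep(\vec u_i,z_i)$ that builds it into the team, and arranges the matrix so that each application of $f_i$ consults this common graph, the argument-matching being enforced by equalities on the universally quantified variables. Verifying that this encoding is both sound and complete --- in particular that the full grid produced by the universal quantifiers supplies exactly the witnesses needed --- is the step I expect to require the most care, after which a routine induction closes both directions.
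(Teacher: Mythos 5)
The paper does not prove this theorem itself but quotes it from \cite{walkoe70,enderton70,vaananen07}, and your proposal reconstructs essentially the classical argument contained in those references: the Hodges-style translation that codes a team by a relation (with the union clause for lax disjunction and the projection clause for lax existentials) for the direction from dependence logic to $\ESO$, and Skolem normal form with dependence atoms, routing every occurrence of each Skolem function through a single graph via guard equalities on universally quantified variables, for the converse. Your counterexample $\exists f \forall x_1 \forall x_2\, f(x_1) \neq f(x_2)$ correctly isolates the one genuinely delicate point (consistency of a function across its occurrences, which the naive per-occurrence translation destroys), and your fix is the standard one, so the plan is correct and matches the cited proofs.
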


What about independence logic? As shown in \cite{gradel10}, a dependence atom $\dep(\tuple x, y)$ is logically equivalent to the independence atom $\indep{\tuple x}{y}{y}$, and, since independence logic is clearly contained in ESO, we have at once that 
\begin{thm}[\cite{gradel10}]
Any independence logic sentence $\phi$ is logically equivalent to some $\ESO$ sentence $\phi^*$, and vice versa.
\end{thm}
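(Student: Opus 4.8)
The plan is to establish the two inclusions $\ESO \leq \indlogic$ and $\indlogic \leq \ESO$ separately. The inclusion $\ESO \leq \indlogic$ (the ``vice versa'') needs no real work and follows by composition: Theorem~\ref{thm:depESO} gives that every $\ESO$ sentence is equivalent to a dependence logic sentence, and every dependence logic sentence becomes an independence logic sentence once each dependence atom $\dep(\tuple x, y)$ is replaced by the equivalent independence atom $\indep{\tuple x}{y}{y}$, as recalled above from \cite{gradel10}. Hence $\ESO \leq \D = \FO(\dep(\ldots)) \leq \indlogic$.

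For the remaining inclusion $\indlogic \leq \ESO$ I would set up the standard ``team-as-relation'' compositional translation. Fix a formula $\phi$ with free variables $\tuple v = (v_1,\ldots,v_n)$. A team $X$ with $\Dom(X) = \{v_1,\ldots,v_n\}$ is just a set of assignments and is therefore coded faithfully by the $n$-ary relation $X(\tuple v) = \{s(\tuple v) : s \in X\}$. The goal is to construct, by induction on $\phi$, an $\ESO$ formula $\tau_\phi(R)$ in a fresh $n$-ary relation symbol $R$ such that for every $\M$ and every team $X$ of domain $\{v_1,\ldots,v_n\}$,
\[ \M \models_X \phi \iff (\M, X(\tuple v)) \models \tau_\phi(R). \]
The literal and atomic cases are first-order: for a literal $\alpha$, flatness (Theorem~\ref{flatness}) yields $\tau_\alpha(R) = \forall \tuple v\,(R\tuple v \to \alpha)$, while an independence atom $\indep{\tuple x}{\tuple y}{\tuple z}$ translates, via \textbf{TS-ind}, into the first-order condition over $R$ stating that any two tuples of $R$ agreeing on $\tuple x$ admit a third tuple of $R$ agreeing with the first on $\tuple x\tuple y$ and with the second on $\tuple z$. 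Conjunction is immediate: $\tau_{\psi \wedge \theta}(R) = \tau_\psi(R) \wedge \tau_\theta(R)$.

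The two cases that carry the content are disjunction and existential quantification, where the lax clauses must be encoded using fresh existential second-order quantifiers. For disjunction I would take
\[ \tau_{\psi \vee \theta}(R) = \exists R_1 \exists R_2\,\big( \forall \tuple v\,(R\tuple v \leftrightarrow (R_1 \tuple v \vee R_2 \tuple v)) \wedge \tau_\psi(R_1) \wedge \tau_\theta(R_2) \big), \]
matching the covering decomposition $X = Y \cup Z$ of \textbf{TS-$\vee$}; for the existential quantifier I would introduce an $(n+1)$-ary $R'$ coding the team $X[H/v]$ and write
\[ \tau_{\exists v\, \psi}(R) = \exists R'\,\big( \forall \tuple v\, w\,(R'\tuple v w \to R\tuple v) \wedge \forall \tuple v\,(R\tuple v \to \exists w\, R'\tuple v w) \wedge \tau_\psi(R') \big), \]
where the second conjunct forces the associated supplementing function $H$ to take only nonempty values, exactly as required by \textbf{TS-$\exists$}; the universal quantifier is the degenerate variant in which $R'$ is pinned down to be $R \times M$.

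The one genuine point to check is that this translation stays inside $\ESO$: the second-order quantifiers it introduces are all existential and occur only positively, since the inductive hypotheses $\tau_\psi$ and $\tau_\theta$ are never negated, so they can be pulled to the front to leave a first-order matrix. A routine induction then confirms the displayed equivalence. For a sentence $\phi$ one has $\tuple v = \emptyset$, so $R$ is $0$-ary and the team $\{\emptyset\}$ corresponds to $R$ holding; instantiating $R$ in this way turns $\tau_\phi(R)$ into a genuine $\ESO$ sentence $\phi^*$ with $\M \models \phi \iff \M \models \phi^*$. I expect the only delicate part to be the $\vee$ and $\exists$ clauses, where one must get the covering and nonemptiness conditions exactly right, together with the verification of prenex $\ESO$ form; the rest is bookkeeping.
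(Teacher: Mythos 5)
Your proposal is correct and takes essentially the same route as the paper: the ``vice versa'' direction is argued exactly as the paper does, via Theorem~\ref{thm:depESO} together with the equivalence of $\dep(\tuple x,y)$ and $\indep{\tuple x}{y}{y}$. For the containment $\indlogic \le \ESO$, which the paper dismisses as clear (citing \cite{gradel10}), you supply the standard team-as-relation compositional translation; this is precisely the classical argument behind that citation (modulo the routine convention that bound variables are renamed apart so each quantifier genuinely extends the team's domain), so it fills in detail rather than diverging from the paper's approach.
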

Furthermore,
\begin{thm}[\cite{vaananen13}]
Any independence logic formula is equivalent to some pure independence logic formula.
\end{thm}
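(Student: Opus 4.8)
The plan is to exploit that $\indNRlogic$ is literally a fragment of $\indlogic$, so the inclusion $\indNRlogic \le \indlogic$ is immediate: a pure atom $\indepc{\tuple y}{\tuple z}$ is just the conditional atom $\indep{\emptyset}{\tuple y}{\tuple z}$. The whole content is therefore the converse translation $\indlogic \to \indNRlogic$. Since the two logics share all of first-order syntax and differ only in their independence atoms, I would carry this out by structural induction, or equivalently by a single substitution step: replace every conditional independence atom occurring in a given $\indlogic$-formula by a logically equivalent pure independence formula, leaving the rest of the formula untouched.

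For this substitution to be legitimate I would first record that, in team semantics, logical equivalence is a congruence. The clauses TS-$\vee$, TS-$\wedge$, TS-$\exists$ and TS-$\forall$ refer to their subformulas only through the relation $\M \models_Y (\cdot)$, so replacing a subformula by one with the same satisfying (model, team) pairs preserves satisfaction at every level. The replacement formula for an atom will introduce fresh bound variables; these are harmless, because by Locality (Proposition \ref{thm:loc}) satisfaction depends only on the restriction of the team to the free variables, and by the Empty Team Property the degenerate cases behave correctly. Thus the theorem reduces to the following Key Lemma: every conditional independence atom $\indep{\tuple x}{\tuple y}{\tuple z}$ is equivalent to a formula of $\indNRlogic$.

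To prove the Key Lemma I would build the pure formula from three ingredients. First, constancy is already pure-definable: the atom ``$\tuple v$ is constant on the team'' is equivalent to $\indepc{\tuple v}{\tuple v}$, since the latter forces $s(\tuple v)=s'(\tuple v)$ for all $s,s'$. Second, recalling TS-ind, the atom $\indep{\tuple x}{\tuple y}{\tuple z}$ asserts exactly that, over each fiber $\{s \in X : s(\tuple x)=\tuple a\}$, the projection to $\tuple y\tuple z$ is a full Cartesian product. Third, I would capture this fiberwise product condition by existentially quantifying auxiliary tuples that tag each assignment with fiber-relative witnesses, and then asserting a single pure independence between the tagged tuples, together with the accompanying constancy and equality constraints; the point is that the lax existential quantifier TS-$\exists$ may assign a whole set of values to each assignment, and this freedom is what lets it furnish, for every pair $s,s'$ lying in a common $\tuple x$-fiber, the witness $s''$ required by TS-ind. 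I would then verify both directions of the equivalence by directly unwinding TS-$\exists$ and TS-ind.

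The main obstacle is precisely this last construction, and the reason it is delicate is instructive: pure independence is unconditional, so the naive attempt $\indepc{\tuple x\tuple y}{\tuple x\tuple z}$ is hopeless, as it shares the coordinate $\tuple x$ on both sides and hence forces $\tuple x$ to be constant on the entire team, which is far stronger than conditional independence. The construction must therefore simulate conditioning on $\tuple x$ without ever collapsing $\tuple x$ to a constant, and arranging the guessed auxiliary data and the accompanying constancy and equality constraints so that the induced pure independence holds exactly within fibers is the crux of the argument.
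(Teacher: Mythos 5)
You have correctly identified the shape of the problem: the inclusion $\indNRlogic \le \indlogic$ is trivial, substitution of equivalent subformulas is a congruence for the lax team-semantic connectives and quantifiers, and by Locality (Proposition \ref{thm:loc}) the theorem does reduce to your Key Lemma, namely that the single atom $\indep{\tuple x}{\tuple y}{\tuple z}$ is equivalent to some $\indNRlogic$-formula. Your side remarks are also correct: $\indepc{\tuple v}{\tuple v}$ does define constancy of $\tuple v$, and the naive candidate $\indepc{\tuple x\tuple y}{\tuple x\tuple z}$ is indeed useless, since it forces $\tuple x$ to be constant on the whole team. For calibration: the paper itself gives no proof of this theorem (it is quoted from \cite{vaananen13}), so your Key Lemma is not a reduction of the statement to something simpler --- it \emph{is} the statement.

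And that is exactly where your proposal stops. The construction you describe --- existentially quantify auxiliary tuples that ``tag each assignment with fiber-relative witnesses'' and assert one pure independence among the tags, together with unspecified constancy and equality constraints --- is never carried out: no formula is written down, and neither direction of the claimed equivalence is verified against TS-$\exists$ and TS-ind. You even concede that arranging these constraints ``is the crux of the argument''; a proof whose crux is deferred is a gap, not a proof, and this particular crux is known to be nontrivial (it is the content of \cite{vaananen13}). If you want to close the gap using material available in this paper and its references, one route is the following chain: by \cite{galliani12}, every conditional independence atom is equivalent to a formula of inclusion/exclusion logic $\FO(\subseteq, |)$, and exclusion atoms are there shown interdefinable with dependence atoms; then eliminate the inclusion atoms via item \ref{thm:inc_ind} of Theorem \ref{two} and the dependence atoms via Theorem \ref{thm:DLINDNR}, both of which are formula-level equivalences into $\indNRlogic$, so your congruence step applies at each stage and the Key Lemma follows. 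Alternatively, carry out your tagging construction explicitly --- but then the formula and both verification directions must actually appear, since they are the entire mathematical content of the theorem.
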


For inclusion logic the following is known.
\begin{thm}\label{two} ~
\begin{enumerate}
\item \label{thm:inc_ind}
An inclusion atom $\tuple x \subseteq \tuple y$ is equivalent to the $\indNRlogic$ expression
\[
\forall v_1 v_2 \tuple z ((\tuple z \not = \tuple x \wedge \tuple z \not = \tuple x) \vee (v_1 \not = v_2 \wedge \tuple z \not = \tuple y) \vee ((v_1 = v_2 \vee \tuple z = \tuple y) \wedge \indepc{\tuple z}{v_1 v_2}))
\]
where $v_1$, $v_2$ and $\tuple z$ are new variables \cite{galliani12}.
\item\label{thm:ILGFP} Any inclusion logic sentence $\phi$ is logically equivalent to some \emph{positive greatest fixpoint logic} sentence $\phi^*$, and vice versa \cite{gallhella13}.
\end{enumerate}
\end{thm}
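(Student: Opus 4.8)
The statement collects two results of very different character, and I would prove them by entirely separate routes. For part~\ref{thm:inc_ind} I would simply unfold the team-semantic rules, reading the evidently mistyped first disjunct $\vec z \neq \vec x \wedge \vec z \neq \vec x$ as $\vec z \neq \vec x \wedge \vec z \neq \vec y$. Fix a model $\M$ and a team $X$ with $\vec x,\vec y \in \Dom(X)$, and let $X' = X[M/v_1][M/v_2][M/\vec z]$ be the team produced by the three universal quantifiers; by \textbf{TS-}$\forall$ the equivalence reduces to showing that $X(\vec x)\subseteq X(\vec y)$ holds iff $X'$ can be split, via \textbf{TS-}$\vee$, into $Y_1\cup Y_2\cup Y_3$ so that $Y_1\models\vec z\neq\vec x\wedge\vec z\neq\vec y$, $Y_2\models v_1\neq v_2\wedge\vec z\neq\vec y$, and $Y_3$ satisfies both $v_1=v_2\vee\vec z=\vec y$ and the pure atom $\indepc{\vec z}{v_1 v_2}$. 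For the direction from the inclusion atom to the formula I would assume $X(\vec x)\subseteq X(\vec y)$, send to $Y_1$ the assignments whose $\vec z$-value lies outside $\{s(\vec x),s(\vec y)\}$, to $Y_2$ the off-diagonal assignments with $\vec z\neq\vec y$, and the remainder to $Y_3$; the independence atom then holds on $Y_3$ because every $\vec z$-value occurring there lies in $X(\vec x)\cup X(\vec y)=X(\vec y)$, so a witness with $\vec z=\vec y$ and any prescribed $(v_1,v_2)$ can be drawn from $X$ and adjoined to $Y_3$.

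The interesting direction of part~\ref{thm:inc_ind} is the converse, and here the key observation is that certain assignments are \emph{forced} into $Y_3$. By Flatness (Theorem~\ref{flatness}) the first two disjuncts are first-order, so no assignment with $\vec z=\vec y$ can lie in $Y_1$ or in $Y_2$; hence for every $w\in X$ and every off-diagonal pair $(p,q)$ the assignment $w[p/v_1][q/v_2][w(\vec y)/\vec z]$ must belong to $Y_3$, which therefore always contains off-diagonal members carrying $\vec z=\vec y$. Given $\vec a\in X(\vec x)$, the diagonal assignment with $\vec z=\vec a=\vec x$ is forced into $Y_3$ for the same reason; applying $\indepc{\vec z}{v_1 v_2}$ to it and to one of the off-diagonal members yields an element of $Y_3$ with $\vec z=\vec a$ and $v_1\neq v_2$, and the constraint $v_1=v_2\vee\vec z=\vec y$ forces $\vec z=\vec y$ there, whence $\vec a\in X(\vec y)$. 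This closes the equivalence.

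Part~\ref{thm:ILGFP} is of a completely different order, and I would approach it through the union-closure property of inclusion logic: if $\M\models_{X_i}\phi$ for every member of a family of teams then $\M\models_{\bigcup_i X_i}\phi$, which I would first establish by induction on $\phi$ using the rules \textbf{TS-}$\vee$, \textbf{TS-}$\exists$, and \textbf{TS-inc}. Union closure guarantees that for each formula and model there is a greatest satisfying team and that the operator assigning it is monotone, so the forward inclusion --- every inclusion sentence is equivalent to a positive greatest fixpoint sentence --- follows by computing this greatest team as a greatest fixed point, the inclusion atoms supplying exactly the ``look-back'' conditions preserved along the descending stages. For the converse I would encode, by a positive greatest fixpoint sentence, the stage-by-stage approximation of the fixed point from above, using inclusion atoms to express that a tuple surviving at one stage is matched by a witness at the next, and the set-valued witnessing of \textbf{TS-}$\exists$ to carry the recursion.

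The main obstacle is squarely in part~\ref{thm:ILGFP}, and specifically in the direction from greatest fixpoint logic into inclusion logic: the translation of inclusion logic into the fixpoint world is guided rather directly by union closure and monotonicity, but simulating an unbounded greatest-fixed-point recursion inside the bounded, quantifier-based syntax of inclusion logic is delicate, since the number of iteration stages is not fixed in advance and must be emulated entirely through the nondeterministic, set-valued choices permitted by the lax existential rule. Part~\ref{thm:inc_ind}, by contrast, is pure bookkeeping once the forced membership of the $\vec z=\vec y$ assignments in $Y_3$ is isolated as the engine of the argument.
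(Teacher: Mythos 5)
Note first that the paper itself proves neither claim: Theorem \ref{two} is stated as imported background, with item \ref{thm:inc_ind} cited from \cite{galliani12} and item \ref{thm:ILGFP} from \cite{gallhella13}. So your attempt can only be measured against those sources, not against anything in this paper.

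Your proof of item \ref{thm:inc_ind} is correct, and it is essentially the argument of \cite{galliani12}. You rightly read the first disjunct as the typo it is (it should be $\vec z \neq \vec x \wedge \vec z \neq \vec y$). In the left-to-right direction your three-way split is exhaustive, and the independence atom holds on $Y_3$ because every $\vec z$-value occurring there lies in $X(\vec x) \cup X(\vec y) = X(\vec y)$, so the required combining witness can be taken of the form $w[p/v_1][q/v_2][w(\vec y)/\vec z]$ for some $w \in X$ with $w(\vec y) = s(\vec z)$, and any such assignment is itself forced into $Y_3$. In the converse direction, the observation that all assignments with $\vec z = \vec y$, and all diagonal assignments with $\vec z = \vec x$, must land in $Y_3$ is exactly the right engine: combining a forced diagonal assignment carrying $\vec z = \vec a \in X(\vec x)$ with a forced off-diagonal one via $\indepc{\vec z}{v_1 v_2}$, and then invoking the guard $v_1 = v_2 \vee \vec z = \vec y$, yields $\vec a \in X(\vec y)$. (You silently use that off-diagonal pairs exist, i.e. $|M| \geq 2$; this is covered by the paper's standing assumption that all structures have at least two elements, but it should be said.)

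Item \ref{thm:ILGFP} is where the genuine gap lies, and it is larger than your closing paragraph concedes: this equivalence is the main theorem of \cite{gallhella13}, a paper in its own right, and your sketch proves neither direction. For the direction from inclusion logic into positive greatest fixpoint logic, the plan of ``computing the greatest satisfying team as a greatest fixed point'' founders on the failure of downward closure: the greatest satisfying team does not determine the satisfaction relation. For the atom $x \subseteq y$ the greatest team over domain $\{x,y\}$ is all of $M^2$, yet a singleton team $\{s\}$ with $s(x) \neq s(y)$ fails the atom, so a compositional induction whose only data is the greatest team cannot go through. What is true, and what the actual proof exploits, is a relativized statement: by union closure, $\M \models_X \phi$ iff $X$ equals the union of its own satisfying subteams, i.e. iff $X$ is a fixed point of the deflationary monotone operator $Y \mapsto \bigcup \{Z \subseteq Y : \M \models_Z \phi\}$; the substance of \cite{gallhella13} is then to show, by induction on $\phi$, that this parameterized operator is definable in positive greatest fixpoint logic with the team as an extra relation argument, and, conversely, to simulate greatest fixpoint recursion inside inclusion logic. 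Your sketch contains neither induction, and you yourself flag the converse as ``the main obstacle''; as it stands, part \ref{thm:ILGFP} must be treated as quoted from the literature rather than proved.
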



We conclude this subsection with two novel results, a characterization of \emph{dependence} in terms of pure independence and a \emph{prenex normal form theorem} for formulas of our logics.

\begin{thm}\label{thm:DLINDNR}

For all models $\M$ and teams $X$
\[
	\M \models_X =\!\!(\vec{x}, y) \Leftrightarrow \M \models_X \forall \vec{z} \exists w ((\vec{z} = \vec{x} \rightarrow w = y )\wedge \vec{x} y \bot \vec{z} w).
\]
\end{thm}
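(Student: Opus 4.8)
The plan is to establish the equivalence by proving the two implications separately, handling $\dep(\vec x, y)$ through its assignment-wise characterization (TS-dep) and the right-hand side by unwinding TS-$\forall$, TS-$\exists$ and TS-$\wedge$ in turn. The conceptual picture guiding both directions is that the fresh variable $w$ is forced to behave as ``a copy of $y$ re-indexed by $\vec z$'': the conjunct $\vec z=\vec x\rightarrow w=y$ pins $w$ to $y$ exactly on the diagonal $\vec z=\vec x$, while the pure independence atom $\indepc{\vec x y}{\vec z w}$ decouples the pair $(\vec x,y)$ from the pair $(\vec z,w)$. Note first that $\vec z=\vec x\rightarrow w=y$ abbreviates the first-order formula $\vec z\neq\vec x\vee w=y$, so by Theorem~\ref{flatness} its satisfaction reduces to assignment-wise Tarski satisfaction; I will use this freely.

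For the direction from left to right, assume $\M\models_X\dep(\vec x,y)$, so there is a function $f$ with $s(y)=f(s(\vec x))$ for every $s\in X$. Since $M$ has at least two elements, extend $f$ arbitrarily to a total $\hat f\colon M^{|\vec x|}\to M$ agreeing with $f$ on $X(\vec x)$, and take the deterministic choice $H$ sending each assignment $t$ of $X[M/\vec z]$ to the singleton $\{\hat f(t(\vec z))\}$; write $Y=X[M/\vec z][H/w]$. On the diagonal $t(\vec z)=t(\vec x)$ one has $\hat f(t(\vec z))=\hat f(t(\vec x))=f(t(\vec x))=t(y)$, so the implication holds on $Y$. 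For the independence atom, given $t,t'\in Y$ with $t$ lying over some $s\in X$, the required witness is obtained by keeping $s$ and the $\vec x y$-part of $t$ but resetting $\vec z$ to $t'(\vec z)$: the corresponding member of $Y$ automatically carries $w=\hat f(t'(\vec z))=t'(w)$, which is exactly what TS-ind demands.

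For the converse, suppose the right-hand side holds and fix the witnessing $H$ with $Y=X[M/\vec z][H/w]$ satisfying both conjuncts. Take $s_1,s_2\in X$ with $s_1(\vec x)=s_2(\vec x)=:\vec a$; the goal is $s_1(y)=s_2(y)$. For $i=1,2$ choose $t_i\in Y$ lying over $s_i$ with $\vec z$ set to $\vec a$ (possible since $H$ is nonempty-valued); as $t_i(\vec z)=\vec a=t_i(\vec x)$, the implication forces $t_i(w)=t_i(y)=s_i(y)$. Applying TS-ind to $t_1,t_2$ produces $t''\in Y$ with $t''(\vec x y)=t_1(\vec x y)$ and $t''(\vec z w)=t_2(\vec z w)$; then $t''(\vec z)=\vec a=t''(\vec x)$, so the implication applies once more to give $t''(w)=t''(y)$, whence $s_2(y)=t_2(w)=t''(w)=t''(y)=t_1(y)=s_1(y)$, as desired.

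The routine bookkeeping lies in checking that the constructed assignments genuinely belong to the relevant teams; the one genuine subtlety in the forward direction is that $\vec z$ ranges over all of $M^{|\vec x|}$ rather than only over $X(\vec x)$, which is precisely why $f$ must be totalized before $H$ is defined. The real crux, and the step I expect to be least obvious, is the backward chaining argument: it is the \emph{second} appeal to the conditional $\vec z=\vec x\rightarrow w=y$---applied to the independence-produced witness $t''$, whose $\vec z$-value coincides with its $\vec x$-value by construction---that collapses the two $y$-values, and getting the roles of $\vec x y$ and $\vec z w$ in the independence atom the right way round is essential.
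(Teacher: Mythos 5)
Your proof is correct and follows essentially the same route as the paper's: in the forward direction you define the deterministic choice function $H(t)=\{\hat f(t(\vec z))\}$ from the Skolem function witnessing $\dep(\vec x,y)$ and verify both conjuncts exactly as the paper does, and in the backward direction you use the identical three-assignment chaining argument (extend $s_1,s_2$ along the diagonal $\vec z=\vec a$, apply TS-ind, then apply the conditional a second time to the independence witness). The only cosmetic differences are your explicit appeal to Flatness for the first-order conjunct and the explicit totalization of $f$, both of which the paper leaves implicit.
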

\begin{proof}
Suppose first that $\M \models_X =\!\!(\vec{x}, y)$. Then there exists a function $f: M^{|\vec{x}|} \rightarrow M$ such that $f(s(\vec{x})) = s(y)$ for all $s \in X$. Then for $Y = X[M/\vec{z}]$, define the choice function $H: Y \rightarrow \Po(M)\setminus \{\emptyset\}$ so that
\[
	H(s) = \{f(s(\vec{z}))\}
\]
for all $s \in Y$, and let $Z = Y[H/w]$. If we can verify that $\M \models_Z \vec{z} = \vec{x} \rightarrow w = y$ and that $\M \models_Z \vec{x} y \bot \vec{z} w$, the left-to-right direction of our proof is done. Now, if $h \in Z$ then $h(w) = f(h(\vec{z}))$ and $h(y) = f(h(\vec{x}))$, and therefore $\M \models_Z \vec{z} = \vec{x} \rightarrow w = y$. Furthermore, for $h, h' \in Z$, we have that $h'' = h[h'(\vec{z})/\vec{z}][h'(w)/w] \in Z$, since our choice of $w$ depends only on $\vec{z}$, and therefore $\M \models_Z \vec{x} y\bot \vec{z} w$. 

Conversely, suppose that there exists a function $H: X[M/\vec{z}] \rightarrow \Po (M)\backslash \{\emptyset\}$ such that, for $Z = X[M/\vec{z}][H/w]$, $\M \models_Z \vec{z} = \vec{x} \rightarrow w = y \wedge \vec{x} y \bot \vec{z} w$. Now let $s, s' \in X$ be such that $s(\vec{x}) = s'(\vec{x}) = \vec{m}$, let $a = s(y)$ and let $b = s'(y)$: we need to prove that $a = b$. 

Take $h \in s[\vec{m}/\vec{z}][H/w] \subseteq Z$: since $\M \models_Z \vec{z} = \vec{x} \rightarrow w = y$, we must have that $h(w) = s(y) = a$. Similarly, for $h' \in s'[\vec{m}/\vec{z}][H/w] \subseteq Z$, we must have that $h'(w) = s'(y) = b$. But $\M \models_Z \vec{x} y \bot \vec{z} w$, so there exists a $h'' \in Z$ such that $h''(\vec{x} y) = h(\vec{x} y) = \vec{m} a$ and $h''(\vec{z} w) = h'(\vec{z} w) = \vec{m} b$. Since, again, $\M \models_Z \vec{z} = \vec{x} \rightarrow w = y$, the only possibility is that $a = b$, as required.
\end{proof}

\begin{lem}\label{aputulos}
Let $\phi, \psi\in \FO(=\!\!(\ldots), \bot_{\rm c}, \subseteq)$ and let $x$ be a variable not occurring free in $\psi$. Then the following equivalences hold:
\begin{enumerate}

\item $\on x \phi \ja \psi \equiv \on x(\phi \ja \psi)$,
\item $\on x \phi \tai \psi \equiv \on x(\phi \tai \psi)$,
\item $\forall x \phi \ja \psi \equiv \forall x(\phi \ja \psi)$,
\item $\forall x \phi \tai \psi \equiv \on a \on b \forall x((\phi\ja a=b) \tai (\psi\ja a \neq b))$ where $a$ and $b$ are new variables.
\end{enumerate}

\end{lem}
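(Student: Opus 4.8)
The plan is to prove each of the four equivalences directly from the clauses of team semantics, using the Locality property (Proposition~\ref{thm:loc}) together with the hypothesis $x \notin \Fr(\psi)$ as the main tool. The recurring point is that, because $x$ (and the fresh variables $a,b$ appearing in item~4) do not occur free in $\psi$, any two teams agreeing on $\Fr(\psi)$ satisfy $\psi$ simultaneously, and the same holds for $\phi$ on $\Fr(\phi)$. Throughout I would fix $\M$ and $X$ and argue both inclusions of each claimed equivalence.

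For items~1 and~3 I expect no real difficulty. For item~1, given a witnessing choice function $H$ for $\exists x\phi$ on $X$, the same $H$ witnesses $\exists x(\phi\wedge\psi)$: one has $\M\models_{X[H/x]}\phi$ by hypothesis, while $X[H/x]\upharpoonright\Fr(\psi) = X\upharpoonright\Fr(\psi)$ since $x\notin\Fr(\psi)$, so $\M\models_{X[H/x]}\psi$ by Locality; the converse is read off the same computation. Item~3 is identical with the deterministic duplication $X[M/x]$ in place of $X[H/x]$.

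The two disjunctive cases are the substantive ones, because disjunction is interpreted by splitting the team while our atoms are \emph{not} downward closed, so I cannot pass to subteams freely; instead I would arrange that the relevant restricted teams coincide \emph{exactly} and then invoke Locality. For item~2, from a split $X = Y\cup Z$ with $\M\models_Y\exists x\phi$ (witnessed by $H$) and $\M\models_Z\psi$, I would build a single choice function $H'$ on $X$ that agrees with $H$ on $Y$ and takes an arbitrary value on $Z\setminus Y$, and split $X[H'/x]$ into the image of $Y$ (which is exactly $Y[H/x]$, hence still satisfies $\phi$) and the image of $Z$ (whose restriction to $\Fr(\psi)$ equals $Z\upharpoonright\Fr(\psi)$, hence satisfies $\psi$ by Locality); overlaps between the two parts are harmless since their union is all of $X[H'/x]$. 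Conversely, from $\M\models_{X[H/x]}\phi\vee\psi$ with split $X[H/x] = A\cup B$, I would set $Y=\{s\in X : \exists m\in H(s),\ s[m/x]\in A\}$ and $Z$ analogously; restricting $H$ to the values landing in $A$ gives a choice function $H_Y$ with $Y[H_Y/x]$ \emph{exactly} equal to $A$ (so no downward closure is needed), while $Z\upharpoonright\Fr(\psi) = B\upharpoonright\Fr(\psi)$ yields $\M\models_Z\psi$ by Locality.

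Item~4 is where I expect the main obstacle, and where the standing two-element assumption on structures is used. The idea is that the fresh variables $a,b$ act as a flag: assignments with $a=b$ carry the $\forall x\phi$-part and assignments with $a\neq b$ carry the $\psi$-part. For the left-to-right direction, given $X = Y\cup Z$ with $\M\models_Y\forall x\phi$ and $\M\models_Z\psi$, I would fix distinct $0,1\in M$, set $a\equiv 0$ everywhere, and let the choice function for $b$ return $\{0\}$ on $Y\setminus Z$, $\{1\}$ on $Z\setminus Y$, and $\{0,1\}$ on the overlap $Y\cap Z$ (using laxness to tag overlapping assignments to both sides); then the $b=0$ part and the $b=1$ part of the resulting team split it into the two required disjuncts, verified through Locality. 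For the converse, after absorbing the choices for $a$ and $b$ the resulting team $X'$ splits necessarily as $A'=\{t\in X':t(a)=t(b)\}$ and $B'=\{t\in X':t(a)\neq t(b)\}$, and I would define $Y$ (resp.\ $Z$) as those $s\in X$ producing an assignment with $a=b$ (resp.\ $a\neq b$), so that $X=Y\cup Z$. The crux — and the step I would treat most carefully — is verifying the team \emph{equalities} $A'\upharpoonright\Fr(\phi) = (Y[M/x])\upharpoonright\Fr(\phi)$ and $B'\upharpoonright\Fr(\psi) = Z\upharpoonright\Fr(\psi)$: the first uses that the flag condition defining $Y$ is independent of $x$, and the second crucially uses $x\notin\Fr(\psi)$ to drop $x$ from the restriction. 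Locality then converts $\M\models_{A'}\phi$ and $\M\models_{B'}\psi$ into $\M\models_{Y[M/x]}\phi$ and $\M\models_Z\psi$, completing the split.
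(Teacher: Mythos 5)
Your proof is correct and takes essentially the same route as the paper: for the substantive item~4, the paper likewise introduces the two flag variables $a,b$ (equal on the $\forall x\phi$-part, unequal on the $\psi$-part, with lax nondeterminism giving both extensions to assignments in the overlap $Y\cap Z$), observes that the split of the universally extended team is forced by the flag condition, and transfers $\phi$ and $\psi$ across the resulting team equalities via Locality, exactly as you do. The only difference is that for items~1--3 the paper simply cites Lemma~12 of \cite{hannula13}, whereas you prove them directly by the standard arguments.
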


\begin{proof}
The cases $1$, $2$ and $3$ are proved as in Lemma $12$ in \cite{hannula13}. We prove number $4$. By Proposition \ref{thm:loc} it is enough to prove the equivalence for teams $X$ with $\Dom(X)=\Fr (\forall x\phi\tai \psi)$. 

Assume first that $\M \models_X \forall x \phi \tai \psi$ and $x$ does not occur free in $\psi$. Then there are $Y\cup Z=X$ such that $\M \models_{Y[M/x]} \phi$ and $\M \models_Z \psi$. Let $0,1\in M$ be distinct. We extend each $s\in X$ with $a\mapsto 0$ and $b\mapsto 0$, for $s \in Y$, and with $a\mapsto 0$ and $b\mapsto 1$, for $s\in Z$, and we let $X'$ consist of these extended assignments. So each $s\in X$ has either one or two extensions in $X'$. Let $Y':=\{s\in X'[M/x] \mid s(a)=s(b)\}$ and $Z' :=\{ s\in X'[M/x]\mid s(a)\neq s(b)\}$. Then by Proposition \ref{thm:loc}, $\M \models_{Y'} \phi \ja a=b$ and $M\models_{Z'} \psi \ja a\neq b$. Hence $M\models_{X'[M/x]} (\phi\ja a=b) \tai (\psi\ja a \neq b)$, and we conclude that $\M \models_X \on a \on b \forall x((\phi\ja a=b) \tai (\psi\ja a \neq b))$.

Assume then that $\M \models_X \on a \on b \forall x((\phi\ja a=b) \tai (\psi\ja a \neq b))$. Let $F_a:X\rightarrow \Po (M)$ and $F_b:X[F_a/a]\rightarrow \Po (M)$ be such that if $X':= X[F_a/a][F_b/b][M/x]$, then $\M\models_{X'} (\phi\ja a=b) \tai (\psi\ja a \neq b)$. Let $Y' \cup Z'=X'$ be such that $\M \models_{Y'} \phi \ja a=b$ and $\M \models_{Z'} \psi \ja a\neq b$. Let $Y:= Y'\upharpoonright \Dom(X)$ and $Z:=Z'\upharpoonright \Dom (X)$. Then $Y[M/x]=Y' \upharpoonright (\Dom (X) \cup \{x\})$, and thus by Proposition \ref{thm:loc} $\M \models_{Y[M/x]} \phi$. Also by Proposition \ref{thm:loc} $\M \models_Z \psi$. Since $Y\cup Z = X$, we conclude that $\M \models \forall x \phi \tai \psi$.
\end{proof}

Lemma \ref{aputulos} allows us to show the following.
\begin{thm}\label{aputulos2}
Any formula $\phi \in \FO(=\!\!(\ldots), \bot_{\rm c}, \subseteq)$ is logically equivalent to some formula $\phi'$ such that 
\begin{enumerate}
\item $\phi'$ is of the form $Q_1 x_1 \ldots Q_k x_k \psi$, where $\psi$ is quantifier-free; 
\item Any literal or non-first-order atom which occurs in $\phi'$ occurred already in $\phi$; 
\item The number of universal quantifiers in $\phi'$ is the same as the number of universal quantifiers in $\phi$.
\end{enumerate}
\end{thm}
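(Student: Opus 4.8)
The plan is to prove this by structural induction on $\phi$, using Lemma \ref{aputulos} as the tool that pushes quantifiers outward through the connectives. Because every formula is assumed to be in negation normal form, the only cases to treat are atoms and first-order literals, the connectives $\wedge$ and $\vee$, and the prefixes $\exists x$ and $\forall x$; in particular there is no negation case, so quantifiers are never ``flipped''.

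In the base case $\phi$ is a literal or one of the atoms $\dep(\ldots)$, $\indepc{\cdot}{\cdot}$, or $\cdot \subseteq \cdot$; then $\phi$ is already quantifier-free and I take $\phi' = \phi$. In the quantifier case $\phi = Q x\,\psi$ with $Q \in \{\exists, \forall\}$, the induction hypothesis gives a prenex $\psi' \equiv \psi$, and $Q x\, \psi'$ is then already prenex, prepends exactly one quantifier of the same kind as in $\phi$, and introduces no new atoms, so conditions 1--3 hold at once. The substance is therefore in the two binary cases $\phi = \psi_1 \wedge \psi_2$ and $\phi = \psi_1 \vee \psi_2$: here the induction hypothesis furnishes prenex equivalents $\phi_1' \equiv \psi_1$ and $\phi_2' \equiv \psi_2$, and the task is to coalesce their two quantifier prefixes into one.

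To do this I would first invoke locality (Proposition \ref{thm:loc}) to rename the bound variables of $\phi_1'$ and $\phi_2'$ apart, so that each quantified variable of one prefix is fresh and does not occur free in the other conjunct or disjunct; this renaming preserves both the arities of all atoms and the number of universal quantifiers. I then extract the quantifiers of $\phi_1'$ to the front one at a time, outermost first, and afterwards those of $\phi_2'$. For $\wedge$ this is exactly parts 1 and 3 of Lemma \ref{aputulos}; for $\vee$ it is parts 2 and 4. The freshness arranged above supplies the side condition ``$x$ not free in $\psi$'' at each step, and since the semantics is compositional, logical equivalence is a congruence, so each extraction may legitimately be carried out inside the partially built prefix. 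The outcome is a single prenex block $Q_1 x_1 \cdots Q_k x_k\,\psi$ with $\psi$ quantifier-free.

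The one genuinely delicate point, and the step I expect to carry the weight of conditions 2 and 3, is the extraction of a \emph{universal} quantifier across a disjunction. Here part 4 of Lemma \ref{aputulos} does not simply move $\forall x$ outward: it replaces $\forall x\,\theta \vee \eta$ by $\exists a \exists b\, \forall x((\theta \wedge a=b) \vee (\eta \wedge a \neq b))$, retaining a single universal quantifier while adding two \emph{existential} ones (and relying on the standing assumption that models have at least two elements). Consequently no step of the induction ever manufactures a new universal quantifier, so the final universal count equals that of $\phi$, which is condition 3. For condition 2, I would observe that none of the transformations introduce new dependence, independence, or inclusion atoms, and that the only first-order literals they create are the equalities $a = b$ and inequalities $a \neq b$ on the auxiliary variables from part 4; every original literal and atom is carried through verbatim, up to the harmless renaming of bound variables, which changes no arity and no relation or function symbol. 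Making this last piece of bookkeeping precise — and confirming that the renaming does not quietly break condition 2 — is the main thing I would need to check carefully.
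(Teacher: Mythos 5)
Your argument is essentially the paper's own proof: the paper offers nothing beyond the remark that Lemma \ref{aputulos} ``allows us to show'' the theorem, and the intended argument is exactly the structural induction you describe, with parts 1--3 of the lemma handling conjunctions and the easy extractions, part 4 handling a universal quantifier across a disjunction, and locality (Proposition \ref{thm:loc}) licensing the renaming of bound variables apart. One small bookkeeping remark: after an application of part 4 the remaining prefix of the first disjunct sits under a conjunction with $a=b$, so the extraction is not literally ``all of $\phi_1'$'s quantifiers, then all of $\phi_2'$'s'' but a nested induction in which parts 1 and 2 are re-invoked inside the new block; this changes nothing essential.

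The point you flag at the end is real, but it is a defect of the theorem's formulation rather than of your proof. Part 4 unavoidably introduces the fresh literals $a=b$ and $a\neq b$ (and renaming bound variables likewise changes which literals and which non-first-order atoms ``occur''), so condition 2 read literally is not achieved by this method --- nor by the paper's, since it is the same method; indeed the naive equivalence $\forall x\,\theta \vee \psi \equiv \forall x(\theta \vee \psi)$ is unsound for non-downwards-closed formulas, which is precisely why part 4 has its more elaborate form. What the theorem is actually used for (Proposition \ref{geq}) requires only that the transformation create no new dependence, independence, or inclusion atoms beyond renaming --- so that the arity bounds defining the fragments are preserved --- together with condition 3, which holds because part 4 adds only existential quantifiers. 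Your induction establishes both of these, so your proof is correct in the same sense in which the paper's implicit proof is: conditions 1 and 3 hold verbatim, and condition 2 holds for non-first-order atoms up to renaming of bound variables and the addition of equalities between fresh variables.
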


\section{Comparing strict and lax semantics}
As we mentioned, there exists an alternative variant of lax semantics, called strict semantics. It differs from lax semantics in the definition of the semantic rules for disjunction and existential quantification, which are replaced respectively by 
\begin{description}
\item[STS-$\vee$:] For all $\psi$ and $\theta$, $\M \models_X \psi \vee \theta$ if and only if $Y$ and $Z$ exist such that $Y \cup Z=X$, $Y\cap Z = \emptyset$, $\M \models_Y \psi$ and $\M \models_Z \theta$; 
\item[STS-$\exists$:] For all $\psi$ and all variables $v$, $\M \models_X \exists v \psi$ if and only if there exists a function $F : X \rightarrow M$ such that $\M \models_{X[F/v]} \psi$, where $X[F/v] = \{s[F(s)/v] : s \in X\}$.
\end{description}
It is clear that 
\begin{prop}
If $\M \models_X \phi$ according to strict team semantics, then $\M \models_X \phi$ according to lax team semantics.
\end{prop}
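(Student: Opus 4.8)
The plan is to prove the statement by structural induction on $\phi$, the guiding observation being that every strict semantic rule is a special case of the corresponding lax rule; consequently any strict witness---be it a splitting of the team or a choice function---is automatically a legitimate lax witness, and the induction hypothesis may be applied to the very same teams it produces.

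First I would dispose of the base cases. For first-order literals and for the dependence, independence, and inclusion atoms, the clauses TS-lit, TS-dep, TS-ind, and TS-inc are literally the same under both semantics, since strict and lax differ only in the treatment of $\vee$ and $\exists$. Thus for atomic $\phi$ the two notions of satisfaction coincide and there is nothing to prove. The induction hypothesis I would carry is: for every subformula $\psi$ of $\phi$ and \emph{every} team $Y$, if $\M \models_Y \psi$ strictly then $\M \models_Y \psi$ laxly.

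In the inductive step the connectives $\wedge$ and $\forall$ are immediate, because TS-$\wedge$ and TS-$\forall$ agree in both semantics: strict satisfaction of $\psi \wedge \theta$ at $X$ gives strict satisfaction of each conjunct at $X$, and strict satisfaction of $\forall v\, \psi$ at $X$ gives strict satisfaction of $\psi$ at $X[M/v]$, so a single application of the hypothesis to each relevant subformula finishes these cases. The two genuinely relevant cases are $\vee$ and $\exists$, and here the whole content is that the strict witness can be reused verbatim. For $\phi = \psi \vee \theta$, a strict witness is a pair $Y,Z$ with $Y \cup Z = X$, $Y \cap Z = \emptyset$, $\M \models_Y \psi$ and $\M \models_Z \theta$; applying the hypothesis yields lax satisfaction on $Y$ and on $Z$, and since TS-$\vee$ requires only $X = Y \cup Z$ (not disjointness), the same decomposition witnesses $\M \models_X \phi$ laxly. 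For $\phi = \exists v\, \psi$, a strict witness is a function $F : X \to M$; I would define $H : X \to \Po(M)\setminus\{\emptyset\}$ by $H(s) = \{F(s)\}$, a nonempty subset of $M$, and observe that $X[H/v] = \{s[m/v] : s \in X,\ m \in H(s)\} = \{s[F(s)/v] : s \in X\} = X[F/v]$. The hypothesis applied to the team $X[F/v]$ then gives lax satisfaction of $\psi$ on $X[H/v]$, and hence of $\phi$ on $X$ by TS-$\exists$.

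There is no real obstacle here---this is why the paper records the result as ``clear.'' The only point requiring any care is elementary bookkeeping: verifying that dropping the disjointness constraint $Y \cap Z = \emptyset$ can only enlarge the class of admissible lax splittings, and checking that the singleton-valued supplement $H(s) = \{F(s)\}$ reproduces \emph{exactly} the modified team $X[F/v]$ rather than a merely related one, so that the induction hypothesis can be invoked on an identical team.
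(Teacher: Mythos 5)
Your proof is correct, and it is precisely the routine structural induction that the paper has in mind when it records this proposition as ``clear'' without proof: the strict witnesses (the disjoint split for $\vee$, the function $F$ with $H(s)=\{F(s)\}$ for $\exists$) are reused verbatim as lax witnesses, with the induction hypothesis quantified over all teams. Nothing is missing; your two points of care (disjointness is only a stronger requirement, and $X[H/v]=X[F/v]$ exactly) are indeed the only details worth checking.
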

For downwards closed logics, such as dependence logic, the converse is also true. 
\begin{prop}[\cite{galliani12}]
For all dependence logic formulas $\phi$, models $\M$ and teams $X$, $\M \models_X \phi$ holds wrt strict team semantics if and only if it holds wrt lax team semantics.
\end{prop}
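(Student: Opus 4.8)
The plan is to prove the one nontrivial direction, namely that for dependence logic formulas lax satisfaction implies strict satisfaction; the converse (strict implies lax) is exactly the previous proposition. I would argue by structural induction on $\phi$, using two observations: first, among the semantic rules the only ones that differ between the two semantics are those for disjunction (TS-$\vee$ versus STS-$\vee$) and for existential quantification (TS-$\exists$ versus STS-$\exists$); second, dependence logic is downwards closed by Proposition \ref{thm:dc}. The induction hypothesis will be that for every proper subformula $\theta$ and every team $Y$, lax and strict satisfaction of $\theta$ over $Y$ coincide.

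The routine cases are immediate. For first-order literals and dependence atoms the rules TS-lit and TS-dep make no reference to disjunction or quantification, so the two semantics assign the same meaning outright. For conjunction and universal quantification the rules TS-$\wedge$ and TS-$\forall$ are literally the same in both semantics: assuming lax satisfaction of $\phi$, I read off lax satisfaction of the immediate subformulas over the appropriate teams, apply the induction hypothesis to obtain strict satisfaction, and reassemble using the (identical) strict rule.

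The content of the proof is in the two cases where the lax witnesses are more permissive than the strict ones. For $\phi = \psi \vee \theta$, a lax decomposition gives $X = Y \cup Z$ with $\M \models_Y \psi$ and $\M \models_Z \theta$, where the union need not be disjoint. I would trim the overlap by setting $Z' := Z \setminus Y$, so that $X = Y \cup Z'$ with $Y \cap Z' = \emptyset$. Since $Z' \subseteq Z$, Proposition \ref{thm:dc} gives $\M \models_{Z'} \theta$ in the lax sense, and the induction hypothesis then upgrades both $\M \models_Y \psi$ and $\M \models_{Z'} \theta$ to strict satisfaction, witnessing STS-$\vee$. For $\phi = \exists v\, \psi$, a lax witness is a function $H : X \to \mathcal P(M) \setminus \{\emptyset\}$ with $\M \models_{X[H/v]} \psi$; choosing a single value $F(s) \in H(s)$ for each $s \in X$ produces a single-valued $F : X \to M$ with $X[F/v] \subseteq X[H/v]$, so downwards closure gives $\M \models_{X[F/v]} \psi$ lax, which the induction hypothesis renders strict, witnessing STS-$\exists$.

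The point that genuinely drives the argument—and the only step where the specific nature of dependence logic is used—is that shrinking the lax witnesses does not destroy satisfaction: trimming the overlap in the disjunction and collapsing the multifunction $H$ to a function $F$ both replace the satisfying team by a subteam. Downwards closure (Proposition \ref{thm:dc}) is precisely what guarantees that these subteams still satisfy the relevant immediate subformula, and it is applied to subformulas for which the induction hypothesis has already identified the two semantics. I do not expect any further obstacle; the locality and empty-team properties play no essential role here, and no case requires anything beyond the induction hypothesis together with downwards closure.
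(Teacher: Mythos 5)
Your proof is correct, and it is essentially the argument the paper intends: the paper itself gives no proof (it cites \cite{galliani12}), but its remark immediately preceding the statement---that for downwards closed logics such as dependence logic the converse implication also holds---is exactly your mechanism of shrinking lax witnesses (trimming the disjunction overlap, collapsing $H$ to a choice function $F$) and invoking Proposition \ref{thm:dc} together with the induction hypothesis. No gaps; the treatment of the routine cases and the use of finiteness-free downwards closure are exactly as they should be.
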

However, the same is false for both  inclusion logic and independence logic. In particular, as we will now see, inclusion logic with strict semantics is equivalent to full existential second order logic, in contrast with the second item of Theorem \ref{two}.

By Theorem \ref{thm:depESO}, it suffices to show that every dependence logic sentence is equivalent to some inclusion logic sentence (with strict semantics). In order to do so, we will use the following \emph{normal form theorem} from \cite{vaananen07}: 
\begin{thm}[\cite{vaananen07}]
Every dependence logic sentence is equivalent to some sentence of the form 
\begin{equation}
	\phi := \forall \vec{x} \exists \vec{y} \left(\bigwedge_{y_i \in \vec{y}} =\!\!(\vec{v_i}, \vec{y_i}) \wedge \theta\right)
	\label{eq:eq1}
\end{equation}
where for all $i$, $\vec{v_i}$ is contained in $\vec{x}$ and where $\theta$ is a quantifier-free first-order formula. 
\end{thm}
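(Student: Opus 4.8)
The strategy is to pass through the correspondence with $\ESO$ furnished by Theorem~\ref{thm:depESO} and then read the resulting second-order functions back as dependence atoms. I would first invoke the prenex normal form of Theorem~\ref{aputulos2} to bring the given dependence logic sentence into the shape $Q_1 x_1 \cdots Q_k x_k\, \psi$ with $\psi$ quantifier-free, and then translate this into an equivalent $\ESO$ sentence in \emph{function Skolem normal form}
\[
	\exists f_1 \cdots \exists f_m\, \forall \vec{x}\, \theta(\vec{x}, f_1, \ldots, f_m),
\]
where $\theta$ is quantifier-free first-order. The reason for routing through $\ESO$ is that the clauses of team semantics translate uniformly into second-order existential quantification: an existential first-order quantifier $\exists x_i$ contributes a Skolem function of the universally quantified variables preceding it, a dependence atom $\dep(\vec a, b)$ contributes a constraint on such a function, and a disjunction contributes a binary choice function selecting the subteam a given assignment belongs to. The standing assumption that every model has at least two elements is what lets the binary choices be coded by functions into $M$.

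The crucial feature to establish here is that each $f_j$ occurs in $\theta$ applied only to a subtuple $\vec{v_j}$ of the universal variables $\vec{x}$, with no existential function nested inside another. This is exactly what standard Skolemization of a prenex matrix delivers: an existential quantifier is replaced by a function of the genuine universal variables standing to its left, so the $f_j$ are never fed one another as arguments. Signature function symbols may of course still wrap the $f_j$, but such terms remain ordinary first-order terms and cause no difficulty.

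With this normal form in hand I would translate back into dependence logic. For each $f_j$, of arity $|\vec{v_j}|$, I introduce a fresh existentially quantified variable $y_j$ together with the atom $\dep(\vec{v_j}, y_j)$, and replace every term $f_j(\vec{v_j})$ in $\theta$ by $y_j$, obtaining a quantifier-free first-order $\theta'$. The resulting sentence $\forall \vec{x}\, \exists \vec{y}\, \bigl( \bigwedge_j \dep(\vec{v_j}, y_j) \wedge \theta' \bigr)$ has precisely the required shape, with each $\vec{v_j} \subseteq \vec{x}$. Equivalence is then a direct unwinding of the definitions: after the universal block the team is $X[M/\vec{x}]$, consisting of all assignments to $\vec{x}$; satisfying $\exists y_j$ under the constraint $\dep(\vec{v_j}, y_j)$ amounts to choosing, for each value of $\vec{v_j}$, a single value for $y_j$, that is, a function $f_j$; and by the Flatness Theorem~\ref{flatness} applied to the first-order matrix, satisfaction of $\theta'$ by the resulting team is equivalent to Tarski-satisfaction of $\theta$ under these $f_j$. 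The converse direction reads a family of witnessing functions off any satisfying choice.

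The main obstacle is the step guaranteeing that every second-order function is applied only to universal first-order variables, since this is exactly what forces the arguments of each dependence atom into $\vec{x}$. For sentences coming from dependence logic this is the benign case, as the Skolem, dependence, and choice functions all depend on the universally quantified variables by construction; the points demanding care are the faithful coding of disjunctions by choice functions, where the $|M| \geq 2$ hypothesis is used, and verifying that the dependence recorded by each $\dep(\vec{v_j}, y_j)$ indeed matches the universal variables on which $f_j$ was permitted to depend once all universal quantifiers have been pulled to the front.
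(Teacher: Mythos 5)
The paper does not prove this statement at all; it imports it from \cite{vaananen07}, so your attempt has to be measured against the known proof there, whose overall route (dependence logic $\to$ ESO in a functional normal form $\to$ back to dependence logic via atoms $\dep(\vec{v_j}, y_j)$) you correctly reproduce. However, there is a genuine gap at exactly the step you yourself flag as crucial: the claim that ``standard Skolemization of a prenex matrix'' already yields an ESO sentence $\exists f_1 \cdots \exists f_m \forall \vec{x}\, \theta$ in which every quantified function is applied only to a fixed subtuple of the universal variables, with no quantified function nested inside another. This is true for the Skolem functions replacing existential first-order quantifiers, but it fails for the functions witnessing the dependence atoms. If the matrix contains an atom $\dep(\vec{a}, b)$ in which some variable of $\vec{a}$ is existentially quantified, then after Skolemization that variable becomes a term $f_i(\vec{u_i})$, and the witnessing constraint reads $g(\ldots, f_i(\vec{u_i}), \ldots) = b^*$: the witness $g$ is necessarily fed another quantified function. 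Unlike the choice functions coding disjunctions, this nesting cannot be removed by composition (setting $g'(\vec{u}) := g(\ldots, f_i(\vec{u_i}), \ldots)$), because the whole semantic content of the witness lies in which arguments it is allowed to see; after composing, the constraint $g'(\vec{u}) = b^*$ is trivially satisfiable and no longer expresses the dependence. Concretely, for $\forall x_1 \exists x_2 \exists x_3 (\dep(x_2, x_3) \wedge \theta_0)$ your procedure produces $\exists f_2 \exists f_3 \exists g\, \forall x_1\, (g(f_2(x_1)) = f_3(x_1) \wedge \cdots)$, which your back-translation cannot consume.

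The missing ingredient is the standard flattening step: introduce fresh universally quantified variables naming the offending terms and guard the matrix, rewriting the example as $\exists f_2 \exists f_3 \exists g\, \forall x_1 \forall z\, ((z \neq f_2(x_1) \vee g(z) = f_3(x_1)) \wedge \cdots)$, so that every quantified function is applied to a tuple of universal variables only ($f_2, f_3$ to $x_1$, and $g$ to $z$). One must also verify that each function symbol occurs with one and the same argument tuple at all of its occurrences, since replacing $f_j(\vec{v_j})$ by a single variable $y_j$ constrained by $\dep(\vec{v_j}, y_j)$ is only sound under that hypothesis (two occurrences $f(\vec{t_1})$, $f(\vec{t_2})$ would otherwise be decoupled into two unrelated functions). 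Both properties hold after flattening, and the fresh variables simply enlarge the universal block, which the statement permits. With this lemma inserted, the rest of your argument (back-translation plus the Flatness Theorem~\ref{flatness}) goes through and is essentially the proof in \cite{vaananen07}; without it, the argument is incomplete at its load-bearing step.
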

As we will now show, in strict semantics the dependence atoms in \eqref{eq:eq1} can be replaced by equivalent inclusion logic subformulas; and, therefore, it follows at once that (strict) inclusion logic is equivalent to dependence logic (and, therefore, to ESO) over sentences. 

\begin{defi}
Let $\M$ be a model and $X$  a team, and let $\vec{x}$ be a tuple of variables in its domain. We say that $X$ is $\vec{x}$-\emph{universal} if for all tuples of elements $\vec{m}$ with $|\vec{m}| = |\vec{x}|$, there exists one and only one $s \in X$ with $s(\vec{x}) = \vec{m}$. 
\end{defi}
\begin{lem}\label{that}
If $X$ is of the form $\{\emptyset\}[M/\vec{x}][\vec{F}/\vec{y}]$ then $X$ is $\vec{x}$-universal.
\end{lem}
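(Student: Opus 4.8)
The plan is to unfold the definition of $X = \{\emptyset\}[M/\vec{x}][\vec{F}/\vec{y}]$ in two stages and to track, for each tuple $\vec{m} \in M^{|\vec{x}|}$, exactly how many assignments of the resulting team take the value $\vec{m}$ on $\vec{x}$. I want to show this number is always exactly one; since ``$\vec{x}$-universal'' is precisely the assertion that every such $\vec{m}$ is realized by one and only one assignment, this suffices. I would split the argument into existence (at least one) and uniqueness (at most one), and carry both through the two construction steps.

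First I would treat the base team $Y := \{\emptyset\}[M/\vec{x}]$. Writing $\vec{x} = (x_1, \ldots, x_n)$, the notation $[M/\vec{x}]$ abbreviates the iterated application $[M/x_1]\cdots[M/x_n]$ of the universal rule TS-$\forall$. Starting from the singleton team $\{\emptyset\}$, an easy induction on $n$ shows that $Y = \{s_{\vec{m}} : \vec{m} \in M^n\}$, where $s_{\vec{m}}$ is the unique assignment with domain $\{x_1, \ldots, x_n\}$ and $s_{\vec{m}}(\vec{x}) = \vec{m}$. The point to verify at each step is that extending by $[M/x_{i+1}]$ replaces one assignment fixing $x_1, \ldots, x_i$ by exactly $|M|$ assignments, one per value of $x_{i+1}$, and that assignments arising from distinct partial tuples stay distinct because they already differ on some earlier coordinate. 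Hence $Y$ is $\vec{x}$-universal.

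Next I would show that post-composing with $[\vec{F}/\vec{y}]$ preserves this property. Here $\vec{F} = (F_1, \ldots, F_r)$ is a tuple of single-valued Skolem functions and $[\vec{F}/\vec{y}]$ abbreviates the iterated strict existential extension, so that $Y[\vec{F}/\vec{y}] = \{s[\vec{F}(s)/\vec{y}] : s \in Y\}$ under the appropriate compositional reading of the $F_j$. The two features I would exploit are: (i) since the variables in $\vec{y}$ are fresh, distinct from every $x_i$, extending an assignment $s$ with values for $\vec{y}$ leaves $s(\vec{x})$ unchanged; and (ii) because each $F_j$ is single-valued, the map $s \mapsto s[\vec{F}(s)/\vec{y}]$ is a bijection from $Y$ onto $X$ --- injective because restricting back to $\Dom(Y)$ recovers $s$, and surjective by the very definition of the operation. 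Combining (i) and (ii), for each $\vec{m}$ the assignments of $X$ with $\vec{x}$-value $\vec{m}$ are exactly the images of the assignments of $Y$ with $\vec{x}$-value $\vec{m}$, of which there is precisely one. Therefore $X$ is $\vec{x}$-universal.

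The only genuinely delicate point, and the one I would guard against, is uniqueness rather than mere existence: existence follows immediately from surjectivity in step (ii), but uniqueness rests on two facts that must not be conflated --- that we begin from the singleton team $\{\emptyset\}$ and not an arbitrary team, and that $\vec{F}$ is a tuple of single-valued functions, as in the strict rule STS-$\exists$. Were $\vec{F}$ instead read as the multivalued choice functions of lax semantics, a single $s \in Y$ could spawn several extensions sharing the same $\vec{x}$-value, and $\vec{x}$-universality would fail. I would therefore state explicitly that the strict reading of $[\vec{F}/\vec{y}]$ is exactly what makes the bijection in step (ii) go through.
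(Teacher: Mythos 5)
Your proposal is correct and matches the paper's treatment: the paper dismisses the lemma as ``Obvious'' and only records the caveat that it would fail if $\vec{F}$ were replaced by the nondeterministic choice functions of lax semantics, which is exactly the delicate point you isolate at the end. Your two-stage verification (universality of $\{\emptyset\}[M/\vec{x}]$, then preservation under single-valued extension via the bijection $s \mapsto s[\vec{F}(s)/\vec{y}]$) is just the routine detail the paper leaves implicit.
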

\begin{proof}
Obvious (but note that if the $\vec{F}$ were replaced by \emph{nondeterministic} choice functions $\vec{H}$, as in the case of the lax semantics, this would not hold).
\end{proof}

\begin{prop}\label{this}
Let $\M$ be a model and $X$  a $\vec{x}$-universal team.  Suppose also that $y \not \in \vec{x}$,  $\vec{v} \subseteq \vec{x}$, and  $\vec{w} = \vec{x} \backslash \vec{v}$ (that is, $\vec{w}$ lists, without repetitions, all variables occurring in $\vec{x}$ but not in $\vec{v}$). Then
\[\M \models_X =\!\!(\vec{v}, y) \Leftrightarrow \M \models_X \forall \vec{q} (\vec{q} \vec{v} y \subseteq \vec{w} \vec{v} y).\]

\end{prop}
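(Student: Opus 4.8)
The plan is to strip away the universal quantifier and the inclusion atom using the team-semantic rules, reducing both sides of the biconditional to a plain statement about the value-sets that the relevant tuples take in $X$. First note that, since the two sides of the inclusion atom $\vec q\vec v y\subseteq\vec w\vec v y$ must have equal length, $\vec q$ is a tuple with $|\vec q|=|\vec w|$. By the rule TS-$\forall$, $\M\models_X\forall\vec q(\vec q\vec v y\subseteq\vec w\vec v y)$ holds iff $\M\models_{Y}\vec q\vec v y\subseteq\vec w\vec v y$ for $Y:=X[M/\vec q]$, so everything comes down to analysing this single inclusion atom in $Y$.

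Next I would compute the two value-sets appearing in TS-inc. Because $\vec q$ consists of fresh variables, each $s\in X$ is extended in $Y$ by all of $M^{|\vec q|}$ in the $\vec q$-coordinates, while its values on $\vec v,\vec w,y$ are unchanged. Hence $Y(\vec q\vec v y)=M^{|\vec w|}\times X(\vec v y)$ and $Y(\vec w\vec v y)=X(\vec w\vec v y)$, so by TS-inc the right-hand side of the Proposition is equivalent to the single set inclusion
\begin{equation*}
M^{|\vec w|}\times X(\vec v y)\ \subseteq\ X(\vec w\vec v y). \tag{$*$}
\end{equation*}
It then remains to show that $(*)$ is equivalent to $\M\models_X\dep(\vec v,y)$, and here I would use $\vec x$-universality crucially, noting that $\vec v$ and $\vec w$ together exhaust the variables of $\vec x$, so prescribing values for $\vec w$ and $\vec v$ determines a value for $\vec x$.

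For the direction from the dependence atom to $(*)$: given $\vec a\in M^{|\vec w|}$ and a pair $(b,c)=(t(\vec v),t(y))\in X(\vec v y)$, universality yields an $s\in X$ with $s(\vec w)=\vec a$ and $s(\vec v)=b$; since $s(\vec v)=t(\vec v)$, the dependence atom forces $s(y)=c$, so $(\vec a,b,c)\in X(\vec w\vec v y)$. Conversely, assuming $(*)$, take $t,t'\in X$ with $t(\vec v)=t'(\vec v)$; feeding $\vec a:=t'(\vec w)$ together with $(b,c):=(t(\vec v),t(y))$ into $(*)$ produces an $s\in X$ agreeing with $t'$ on $\vec w$ and $\vec v$ and with $t$ on $y$, whence $s(y)=t(y)$, and then the \emph{uniqueness} clause of $\vec x$-universality identifies $s$ with $t'$, giving $t'(y)=s(y)=t(y)$. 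This last equivalence is the step that requires real care: the left factor $M^{|\vec w|}$ in $(*)$ is exactly what supplies the \emph{existence} of a witness with an arbitrary $\vec w$-value, but it is the \emph{uniqueness} half of universality that lets us conclude the witness must coincide with the assignment we started from. Without universality the equivalence genuinely fails — for instance under the nondeterministic choice functions permitted by the lax semantics, as already flagged in the proof of Lemma \ref{that} — so keeping track of where each half of the universality hypothesis is used is the main subtlety.
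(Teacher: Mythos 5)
Your proof is correct and takes essentially the same route as the paper's: one direction witnesses the inclusion by combining the existence half of $\vec{x}$-universality with the dependence atom, and the other uses the uniqueness half of universality to identify the witnessing assignment with the one started from (the paper's $s''=s'$ step), exactly as you do. The only difference is organizational — you first flatten $\forall \vec{q}\,(\vec{q}\vec{v}y \subseteq \vec{w}\vec{v}y)$ to the value-set inclusion $M^{|\vec{w}|}\times X(\vec{v}y)\subseteq X(\vec{w}\vec{v}y)$, whereas the paper argues directly with assignments $h=s[\vec{m}/\vec{q}]$ in $X[M/\vec{q}]$.
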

\begin{proof}
Suppose that $\M \models_X =\!\!(\vec{v}, y)$, and let $h = s[\vec{m} / \vec{q}] \in X[M/\vec{q}]$, where $s \in X$. Since $X$ is $\vec{x}$-universal and $\vec{x} = \vec{v} \cup \vec{w}$, there exists an assignment $s' \in X$ such that $s'(\vec{w}) = \vec{m}$ and $s'(\vec{v}) = s(\vec{v})$. Since $y$ is a function of $\vec{v}$ alone, this implies that $s'(y) = s(y)$. Finally, $h' = s'[\vec{m}/\vec{q}] \in X[M/\vec{q}]$, and $h'(\vec{w} \vec{v} y) = \vec{m} s(\vec{v} y) = h(\vec{q} \vec{v} y)$, as required. 

Conversely, suppose that $\M \models_X \forall \vec{q} (\vec{q} \vec{v} y \subseteq \vec{w} \vec{v} y)$, and let $s, s' \in X$ be such that $s(\vec{v}) = s'(\vec{v})$. Now let $\vec{m} = s'(\vec{w})$, and consider $h = s[\vec{m}/\vec{q}] \in X[M/\vec{q}]$. By hypothesis, there exists a $h' \in X[M/\vec{q}]$ such that $h'(\vec{w}) = h(\vec{q}) = \vec{m}$ and $h'(\vec{v} y) = h(\vec{v} y) = s(\vec{v} y)$. This $h'$ is of the form $s''[\vec{m'} / \vec{q}]$ for some $s'' \in X$; and for this $s''$, we have that $s''(\vec{v}) = s(\vec{v}) = s'(\vec{v})$, $s''(\vec{w}) = \vec{m} = s'(\vec{w})$ and $s''(y) = s(\vec{y})$. Now, $\vec{x} = \vec{v} \cup \vec{w}$, and $s''$ coincides with $s'$ over it, and $X$ is $\vec{x}$-universal; therefore, we have to conclude that $s'' = s'$. But then $s'(y) = s''(y) = s(y)$, and therefore $s''$ and $s$ coincide over $y$ too. 
\end{proof}

\begin{cor}
With strict semantics inclusion logic is equivalent to $\ESO$.
\end{cor}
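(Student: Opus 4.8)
The statement asserts the equivalence of two logics, so there are two inclusions to establish. One direction, that inclusion logic under strict semantics is contained in $\ESO$, is the routine one: exactly as in the lax case treated in \cite{galliani12}, the strict semantic clauses can be encoded by existential second-order quantification, the choice functions for $\exists$ now being genuine functions (which is if anything easier to express than the nondeterministic choice relations of the lax semantics) and the splits for $\vee$ being honest partitions. Hence every strict inclusion logic sentence translates into an equivalent $\ESO$ sentence.

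The content of the corollary lies in the converse inclusion, $\ESO \le \inclogic$ under strict semantics. By Theorem \ref{thm:depESO} it suffices to simulate dependence logic, and by the normal form theorem of \cite{vaananen07} quoted above every dependence logic sentence is equivalent to one of the shape
\[
	\phi = \forall \vec{x}\,\exists \vec{y}\Bigl(\bigwedge_{y_i \in \vec{y}} \dep(\vec{v_i}, y_i) \wedge \theta\Bigr),
\]
with each $\vec{v_i} \subseteq \vec{x}$ and $\theta$ quantifier-free first order. My plan is to replace each dependence atom $\dep(\vec{v_i}, y_i)$ by the inclusion logic subformula $\forall \vec{q_i}\,(\vec{q_i}\,\vec{v_i}\,y_i \subseteq \vec{w_i}\,\vec{v_i}\,y_i)$ furnished by Proposition \ref{this}, where $\vec{w_i} = \vec{x}\setminus\vec{v_i}$ and the $\vec{q_i}$ are fresh, obtaining the inclusion logic sentence
\[
	\phi^* = \forall \vec{x}\,\exists \vec{y}\Bigl(\bigwedge_{y_i \in \vec{y}} \forall \vec{q_i}\,(\vec{q_i}\,\vec{v_i}\,y_i \subseteq \vec{w_i}\,\vec{v_i}\,y_i) \wedge \theta\Bigr),
\]
and then to show $\M \models \phi \Leftrightarrow \M \models \phi^*$ in strict semantics.

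The argument is driven by a single observation. Evaluating either sentence at $\{\emptyset\}$, the prefix $\forall \vec{x}\,\exists \vec{y}$ produces, under strict semantics, a team of the form $X = \{\emptyset\}[M/\vec{x}][\vec{F}/\vec{y}]$ for genuine (deterministic) choice functions $\vec{F}$; by Lemma \ref{that} every such $X$ is $\vec{x}$-universal. On any $\vec{x}$-universal team Proposition \ref{this} gives, for each $i$, the equivalence of $\dep(\vec{v_i}, y_i)$ with $\forall \vec{q_i}\,(\vec{q_i}\,\vec{v_i}\,y_i \subseteq \vec{w_i}\,\vec{v_i}\,y_i)$; since conjunction is interpreted by evaluating each conjunct at the same team $X$ and $\theta$ is common to both sentences, the bracketed matrices of $\phi$ and $\phi^*$ are satisfied by exactly the same teams $X$ of this form. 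Quantifying over the available $\vec{F}$ then yields $\M \models \phi \Leftrightarrow \M \models \phi^*$; finally, since strict and lax semantics agree on dependence logic, $\M \models \phi$ computes the intended dependence logic truth value, completing the reduction.

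The crux, and the only place where strictness is essential, is the passage through $\vec{x}$-universality. Under lax semantics the existential step would deliver nondeterministic choice functions, so $X$ need not be $\vec{x}$-universal (exactly the caveat recorded in the proof of Lemma \ref{that}), and Proposition \ref{this} would no longer be applicable; this is precisely why the corollary contrasts with the lax characterisation in Theorem \ref{two}. I therefore expect the main obstacle to be conceptual rather than computational: being careful that it is the strict rule STS-$\exists$ that makes $X$ $\vec{x}$-universal, and checking that the universal quantifiers $\forall \vec{q_i}$ and the freshness of the $\vec{q_i}$ cause no interference, both of which are already absorbed into Proposition \ref{this}.
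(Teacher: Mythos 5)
Your proposal is correct and follows essentially the same route as the paper: reduce via Theorem \ref{thm:depESO} and the dependence logic normal form, then replace each atom $\dep(\vec{v_i},y_i)$ by $\forall \vec{q_i}\,(\vec{q_i}\vec{v_i}y_i \subseteq \vec{w_i}\vec{v_i}y_i)$, justified by Lemma \ref{that} ($\vec{x}$-universality of strict-semantics teams $\{\emptyset\}[M/\vec{x}][\vec{F}/\vec{y}]$) together with Proposition \ref{this}. You merely spell out what the paper leaves terse, including the routine containment of strict inclusion logic in $\ESO$ and the observation that strict and lax semantics coincide on dependence logic.
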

\begin{proof}
By Lemma \ref{that} and the Proposition \ref{this}, any sentence of the form \eqref{eq:eq1} can be expressed in inclusion logic as
\begin{equation}
	\forall \vec{x} \exists \vec{y} \left(\bigwedge_{y_i \in \vec{y}} (\forall \vec{q}_i (\vec{q}_i \vec{v}_i y \subseteq \vec{w}_i \vec{v}_i y)) \wedge \theta\right)
\end{equation}
where for all $i$, $\vec{w}_i = \vec{x} \backslash \vec{v}_i$; and this implies our result.
\end{proof}

The analogue of Theorem \ref{thm:loc} (locality) for inclusion logic with strict semantics fails. As an especially surprising example of such an failure we now show that one can find inclusion logic sentences that count the number of assignments in a team: 
\begin{thm}\label{outo}
For each natural number $n$ there is a sentence $\phi \in \inclogic$ such that for all models $\M$ and teams $X$ where $X \neq \emptyset$ and the variables in $\Dom (X)$ do not appear in $\phi$,
\[\label{equiv}\M \models_X \phi \textrm{ if and only if } |X| \geq n.\]

\end{thm}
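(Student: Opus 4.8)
The plan is to exhibit, for each $n$, an explicit sentence of $\inclogic$ and to verify the two directions by hand. Fix $k$ with $2^k \ge n$; this is possible since every model has at least two elements, so $M^k$ already contains at least $2^k \ge n$ pairwise distinct $k$-tuples. The sentence I would use is
\[
\phi := \exists \vec c_1 \cdots \exists \vec c_n\, \exists \vec v\left( \bigwedge_{1 \le i < j \le n} D_{ij} \;\wedge\; \bigwedge_{i=1}^{n} \vec c_i \subseteq \vec v\right),
\]
where the $\vec c_i$ and $\vec v$ are fresh $k$-tuples of variables and $D_{ij} := \bigvee_{l=1}^{k} c_{i,l} \ne c_{j,l}$ expresses coordinatewise that $\vec c_i$ and $\vec c_j$ differ. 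The role of $D_{ij}$ is that, by the covering nature of team-semantic disjunction, $\M \models_{X'} D_{ij}$ holds precisely when $s(\vec c_i) \ne s(\vec c_j)$ for every $s \in X'$; hence $\bigwedge_{i<j} D_{ij}$ forces the $n$ tuples $s(\vec c_1),\dots,s(\vec c_n)$ to be pairwise distinct at each single assignment $s$.

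For the direction $|X| \ge n \Rightarrow \M \models_X \phi$ I would pick $n$ pairwise distinct tuples $\vec t_1,\dots,\vec t_n \in M^k$, choose the (single-valued, hence legal under strict semantics) witnesses making each $\vec c_i$ constantly equal to $\vec t_i$, and choose the witness for $\vec v$ so that its range includes all of $\vec t_1,\dots,\vec t_n$; this is possible because $X$ has at least $n$ assignments, which can be sent onto the $n$ distinct targets. Then each $D_{ij}$ holds because the $\vec t_i$ are distinct, and each inclusion atom $\vec c_i \subseteq \vec v$ holds since $\{\vec t_i\}$ is contained in the range of $\vec v$.

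For the converse, suppose $\M \models_X \phi$ under strict semantics, witnessed by an extended team $X'$. The key observation is that strict existential quantification over fresh variables is cardinality preserving, so $|X'| = |X|$, while the top-level conjunction keeps the whole matrix evaluated on this single team $X'$. Since $X \ne \emptyset$, fix any $s \in X'$: the conjuncts $D_{ij}$ give that $s(\vec c_1),\dots,s(\vec c_n)$ are pairwise distinct, while the inclusion atoms give $s(\vec c_i) \in X'(\vec c_i) \subseteq X'(\vec v)$ for each $i$. Hence $X'(\vec v)$ contains $n$ distinct tuples, so the projection of $X'$ to $\vec v$ takes at least $n$ values, and therefore $|X| = |X'| \ge n$.

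The step I expect to be the real crux — and the source of the whole phenomenon — is the use of strict rather than lax semantics in the converse: only because the existential choice functions are single-valued do we have $|X'| = |X|$, so that $n$ distinct $\vec v$-values read off a single assignment genuinely force $n$ distinct assignments. Under lax semantics a nondeterministic choice could inflate a small team and the argument would collapse, in accordance with Theorem \ref{two}(\ref{thm:ILGFP}). A secondary subtlety, forced by the empty team property (Proposition \ref{thm:etp}), is that one cannot simply split $X$ into $n$ parts and demand that each be non-empty; instead all the counting is routed through the single projection $\vec v$ and the inclusion atoms, and distinctness is extracted at one fixed assignment $s$ (available since $X \ne \emptyset$). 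Finally, I would double-check that passing to $k$-tuples — necessary because $|M|$ may be smaller than $n$ — causes no trouble, but the coordinatewise disjunctions $D_{ij}$ handle tuple-inequality uniformly.
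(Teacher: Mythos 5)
Your proof is correct and takes essentially the same approach as the paper's: existentially quantify fresh tuples of length about $\log n$, force them pairwise distinct at every assignment via first-order disjunctions of component inequalities (flatness), funnel their values into a single collector tuple via inclusion atoms, and invoke the fact that strict existential quantification of fresh variables preserves team cardinality. The only deviation is a minor but pleasant simplification in the direction $|X| \ge n \Rightarrow \M \models_X \phi$: your separate collector $\vec v$, together with constant witnesses for the $\vec c_i$, lets you avoid the paper's cyclic-shift witness construction, which is needed there because the paper reuses $\vec x_0$ both as one of the pairwise-distinct tuples and as the collector (its inclusion atoms are $\vec x_i \subseteq \vec x_0$).
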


\begin{proof}
Let $n$ be a natural number. We may assume that $n \geq 2$ because in the case $n=1$ we can just choose $\phi:= \top$. Let $\vec{x}_i$, for $0 \leq i \leq n-1$, list variables $x_{i,0}, \ldots ,x_{i,l}$ where $l=$log$(n)$. Let
$$\phi:= \on \vec{x}_0 \ldots \on \vec{x}_{n-1} (\bigwedge_{0 \leq i \leq n-1} \vec{x}_i \subseteq \vec{x}_0 \ja \bigwedge_{0 \leq i < j \leq n-1} \vec{x}_i \neq   \vec{x}_j)$$
where
$$\vec{x}_i \neq \vec{x}_j := \bigvee_{0 \leq k \leq l} x_{i,k} \neq x_{j,k}.$$
Now $\phi$ is as wanted:

Assume first that $\M \models_X \phi $. Then there are, for $0 \leq i \leq n-1$, functions
$$F_i: X[F_{0}/\vec{x}_{0}]\ldots[F_{i-1}/\vec{x}_{i-1}]  \rightarrow M^{l+1}$$
such that
\begin{equation}\label{strictex}
M \models_{X'} \bigwedge_{0 \leq i \leq n-1} \vec{x}_i \subseteq \vec{x}_0 \ja \bigwedge_{0 \leq i < j \leq n-1} \vec{x}_i \neq   \vec{x}_j
\end{equation}
when $X':= X[F_0/\vec{x}_0]\ldots[F_{n-1}/\vec{x}_{n-1}]$. Let $s \in X'$ be some arbitrary assignment.  From \eqref{strictex} it follows that $X'$ must include assignments $s_i$, for $0 \leq i \leq n-1$, such that $s_i(\vec{x}_0) = s(\vec{x}_i)$. Also from \eqref{strictex} it follows that $s(\vec{x}_i) \neq s(\vec{x}_j)$, for $0 \leq i<j \leq n-1$. Thus the assignments $s_i$ are distinct and therefore $|X'| \geq n$. Because existential quantification of new variables in strict semantics preserves the cardinality of a team we deduce that $X \geq n$.

Suppose then $X \geq n$. By the assumption $n \geq 2$, and thus we may deduce that $|M| \geq 2$. Let $0$ and $1$ be two different members of $M$, and let $\overline{i}$ be the binary representation (of length $l+1$) of $i$, for $0 \leq i \leq n-1$, in terms of these $0$ and $1$. Choose then $n$ different assignments $s_0, \ldots ,s_{n-1}$ from $X$. We define, for $0 \leq i \leq n-1$, $F_i:  X[F_0/\vec{x}_0]\ldots[F_{i-1}/\vec{x}_{i-1}] \rightarrow M^{l+1}$ as follows:
\[\begin{array}{ll}
$$F_i(s):=  \left\{\begin{array}{l l}
   \overline{j+i} &\textrm{ if }s \upharpoonright \Dom (X) = s_j,\textrm{ for }0 \leq j \leq n-1,\\
    \overline{i}& \quad \textrm{otherwise}\\
  \end{array}\right.$$
\end{array}\]
where $j+i$ is mod $n$. By the assumption, the variables in $\Dom(X)$ are not listed in $\vec{x}_0\ldots \vec{x}_{n-1}$, and thus the functions $F_i$ are consistent with the definition of existential quantification for strict semantics. Without the assumption it could be the case that different $s_i$ and $s_j$ would collapse into one assignment in the quantification procedure. Let $X':= X[F_0/\vec{x}_0]\ldots[F_{n-1}/\vec{x}_{n-1}]$. Then $s_j$, for $0 \leq j \leq n-1$, is extended in $X'$ to 
$$s_j(\overline{j}/\vec{x}_{\vec{0}})(\overline{j+1}/\vec{x}_1)\ldots (\overline{j-2}/\vec{x}_{n-2})(\overline{j-1}/\vec{x}_{n-1}),$$
and each $t\in X\setminus \{s_j \mid 0 \leq j \leq n-1\}$ is extended in $X'$ analogously to $s_0$. So for each $s \in X'$ and $0 \leq i < j \leq n-1$ it holds that $s(\vec{x}_i) \neq s(\vec{x}_j)$. Also
\[\{s({\vec{x}_0) \mid s \in X'}\} = \{\overline{i} \mid 0 \leq i \leq n-1\} = \bigcup_{0 \leq i \leq n-1} \{s(\vec{x}_i) \mid s \in X'\},\]
and thus
\[M \models_{X'}  \bigwedge_{0\leq i \leq n-1} \vec{x}_i \subseteq \vec{x}_0 \ja \bigwedge_{0 \leq i < j \leq n-1} \vec{x}_i \neq   \vec{x}_j\]
which concludes the proof. 
\end{proof}

The failure of locality in non-downwards closed logics with strict semantics is somewhat problematic, as it causes the interpretation of a formula to depend on the values that our assignments take on variables which do \emph{not} occur in it. As a consequence, in the rest of this work we will focus on logics with lax semantics. 
\section{The expressive power of fragments}

The purpose of this section is to generalize the classification of the expressive power of fragments of dependence logic of \cite{durand11} to the case of other variants (with respect to lax semantics). We will consider the following fragments.
\begin{defi} Let $\mathcal{C}$ be a subset of $\{\dep(\ldots),\bot_{\rm c},\bot,\subseteq\}$ and let $k \in \mathbb{N}$. Then
\begin{enumerate}
\item $\FO(\mathcal{C}) (k-$dep$)$ is the class of sentences of $\FO(\mathcal{C})$ in which dependence atoms of the form $\dep(\vec{z},y)$, where $\vec{z}$ is of length at most $k$, may appear.
\item $\FO(\mathcal{C}) (k-$ind$)$ is the class of sentences of $\FO(\mathcal{C})$ in which independence atoms of the form $\vec{y}\bot_{\vec{x}} \vec{z}$, where  $\vec{x}\vec{y}\vec{z}$  has at most $k+1$ distinct variables, may appear.
\item $\FO(\mathcal{C}) (k-$inc$)$ is the class of sentences of $\FO(\mathcal{C})$ in which inclusion atoms of the form $\vec{a} \subseteq \vec{b}$, where $\vec{a}$ and $\vec{b}$ are of length at most $k$, may appear.
\item $\FO(\mathcal{C}) (k\forall)$ is the class of sentences of $\FO(\mathcal{C})$ in which at most $k$ universal quantifiers occur. 
\end{enumerate}
\end{defi}
As in \cite{durand11}, we will write $\ddep{k}$ and $\dforall{k}$ for $\FO(\dep(\ldots )) (k-$dep$)$ and $\FO(\dep(\ldots ))(k\forall)$, respectively.

\subsection{Arity hierarchies}
In this section we will prove that $\ind{k}=\ESOfarity{k}$. In particular this also implies that $\ind{k}=\ddep{k}$ \cite{durand11}. We will also prove that $\inc{k}\leq \ESOfarity{k}$. The direction from $\ESOfarity{k}$ to $\ind{k}$ is straightforward.

\begin{prop}\label{leq}
$\ESOfarity{k}\leq \ind{k}.$
\end{prop}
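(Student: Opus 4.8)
The plan is to reduce the claim to the corresponding statement for dependence logic. By the result of Durand and Kontinen (item \ref{arityH} in the introduction, from \cite{durand11}) we have $\ESOfarity{k} = \ddep{k}$; in particular $\ESOfarity{k} \leq \ddep{k}$. Hence it suffices to show that $\ddep{k} \leq \ind{k}$, that is, that every $\ddep{k}$-sentence is logically equivalent to some independence logic sentence whose independence atoms each involve at most $k+1$ distinct variables. All the content of the result then sits in the Durand--Kontinen equivalence, which we may invoke, plus a purely local translation of atoms.

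For the translation I would use the equivalence of dependence and independence atoms already recalled after Theorem \ref{thm:depESO}: for all models $\M$ and teams $X$,
\[
	\M \models_X \dep(\vec{z}, y) \Leftrightarrow \M \models_X \indep{\vec{z}}{y}{y}.
\]
This is immediate from the rules TS-dep and TS-ind: if $s, s' \in X$ agree on $\vec{z}$, then the witness $s''$ provided by $\indep{\vec{z}}{y}{y}$ must agree with $s$ on $\vec{z}y$ and with $s'$ on $y$, forcing $s(y) = s''(y) = s'(y)$, which is exactly the defining condition of $\dep(\vec{z}, y)$; the converse is equally direct. Given a $\ddep{k}$-sentence $\phi$, I would replace every occurrence of a dependence atom $\dep(\vec{z}, y)$ by $\indep{\vec{z}}{y}{y}$. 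Since the two atoms have the same free variables and are logically equivalent, and since team-semantic equivalence of subformulas is preserved by all the connectives and quantifiers (a routine structural induction using the rules TS-$\vee$, TS-$\wedge$, TS-$\exists$, TS-$\forall$), the resulting sentence $\phi'$ is logically equivalent to $\phi$.

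The only point that requires genuine care is the arity bookkeeping, which I expect to be the main (and rather mild) obstacle. In a $\ddep{k}$-sentence each dependence atom $\dep(\vec{z}, y)$ satisfies $|\vec{z}| \leq k$, so it mentions at most $k+1$ distinct variables; its translation $\indep{\vec{z}}{y}{y}$ involves exactly the variables occurring in $\vec{z}$ together with $y$, hence again at most $k+1$ distinct variables, which is precisely the bound defining $\ind{k}$. Therefore $\phi' \in \ind{k}$, and combining this with $\ESOfarity{k} = \ddep{k}$ gives $\ESOfarity{k} \leq \ind{k}$. No further difficulty is anticipated, since the substantive work is delegated to the cited equivalence and the atom-level translation is the one already introduced in the preliminaries.
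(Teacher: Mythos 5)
Your proof is correct and takes essentially the same route as the paper: both invoke the Durand--Kontinen equivalence $\ESOfarity{k} = \ddep{k}$ from \cite{durand11} and then replace each dependence atom $\dep(\vec{z},y)$ (with $|\vec{z}|\leq k$) by the equivalent independence atom $y \bot_{\vec{z}} y$, which involves at most $k+1$ distinct variables and hence lands in $\ind{k}$. The only cosmetic difference is that the paper performs this substitution on a specific prenex normal form of the $\ddep{k}$-sentence (also taken from \cite{durand11}), whereas you justify the replacement anywhere in the sentence by a general substitution-of-equivalents induction; both are sound.
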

\begin{proof}
Let $\phi \in\ESOfarity{k}$. By \cite{durand11} there exists a $\phi' \in\ddep{k}$ equivalent to $\phi$ and of the form
$$Q^1 x_1\ldots Q^m x_m \on y_1 \ldots \on y_n (\bigwedge_{1 \leq j \leq n} \dep(\vec{z}_j,y_j) \ja \theta)$$
where $\vec{z}_j$, for $1 \leq j \leq n$, is a sequence of length at most $k$. By \cite{gradel10} each dependence atom $\dep(\vec{z},y)$ is equivalent to the independence atom $\indep{\vec{z}}{y}{y}$. Therefore we can present $\phi'$ in the following independence logic form
$$Q^1 x_1\ldots Q^m x_m \on y_1 \ldots \on y_n (\bigwedge_{1 \leq j \leq n}  y_j\bot_{\vec{z}_j} y_j \ja \theta)$$
where $\vec{z}_j y_j$, for $1 \leq j \leq n$, is a sequence of at most $k+1$ different variables.
\end{proof}
We will next show the other direction.

\begin{lem}\label{disjoint}
Let $\vec{b} \bot_{\vec{a}} \vec{c}$ be an independence atom where $\vec{a}$, $\vec{b}$ and $\vec{c}$ are tuples of variables. If $\vec{b_0}$ lists the variables in $\vec{b} - \vec{a}\cup\vec{c}$, $\vec{c_0}$ lists the variables in $\vec{c}-\vec{a}\cup\vec{b}$, and $\vec{d}$ lists the variables in $\vec{b} \cap \vec{c}-\vec{a}$, then
$$\vec{b} \bot_{\vec{a}} \vec{c} \equiv \vec{b_0}\bot_{\vec{a}}\vec{c_0} \ja \bigwedge_{d\in \vec{d}} \dep(\vec{a},d).$$
\end{lem}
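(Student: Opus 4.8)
The plan is to unfold the semantic rules TS-ind and TS-dep and verify the equivalence directly by exhibiting witnessing assignments, the key observation being that a variable shared between $\vec{b}$ and $\vec{c}$ but lying outside $\vec{a}$ can only be accommodated by the independence atom if it is functionally determined by $\vec{a}$. By locality (Proposition \ref{thm:loc}) it suffices to consider teams whose domain is exactly the set of variables occurring in $\vec{a}\vec{b}\vec{c}$. First I would record the variable partition that the statement sets up: as sets of variables, $\vec{b} = \vec{b_0} \sqcup (\vec{b}\cap\vec{a}) \sqcup \vec{d}$ and $\vec{c} = \vec{c_0} \sqcup (\vec{c}\cap\vec{a}) \sqcup \vec{d}$, where in each case the three parts are pairwise disjoint. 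This bookkeeping is what makes the subsequent verifications mechanical.

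For the left-to-right direction, assume $\M \models_X \vec{b}\bot_{\vec{a}}\vec{c}$ and take $s,s' \in X$ agreeing on $\vec{a}$. The atom supplies $s'' \in X$ with $s''(\vec{a}\vec{b}) = s(\vec{a}\vec{b})$ and $s''(\vec{c}) = s'(\vec{c})$. Since $\vec{b_0} \subseteq \vec{b}$ and $\vec{c_0} \subseteq \vec{c}$, the very same $s''$ witnesses $\vec{b_0}\bot_{\vec{a}}\vec{c_0}$. For the dependence conjuncts, each $d \in \vec{d}$ lies in both $\vec{b}$ and $\vec{c}$, so $s''(d) = s(d)$ and simultaneously $s''(d) = s'(d)$, forcing $s(d) = s'(d)$; as $s,s'$ ranged over all pairs agreeing on $\vec{a}$, this is precisely $\M \models_X \dep(\vec{a},d)$.

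For the converse, assume $\M \models_X \vec{b_0}\bot_{\vec{a}}\vec{c_0} \wedge \bigwedge_{d\in\vec{d}} \dep(\vec{a},d)$ and take $s,s'\in X$ agreeing on $\vec{a}$. The independence conjunct yields $s''\in X$ agreeing with $s$ on $\vec{a}\vec{b_0}$ and with $s'$ on $\vec{c_0}$, and I claim this $s''$ already witnesses $\vec{b}\bot_{\vec{a}}\vec{c}$. Checking $s''(\vec{a}\vec{b}) = s(\vec{a}\vec{b})$ splits, along the partition, into three pieces: agreement on $\vec{b_0}$ and on $\vec{a}$ is given, agreement on $\vec{b}\cap\vec{a}$ follows from agreement on $\vec{a}$, and agreement on $\vec{d}$ follows from the conjuncts $\dep(\vec{a},d)$ together with $s''(\vec{a}) = s(\vec{a})$. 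Checking $s''(\vec{c}) = s'(\vec{c})$ is symmetric; the only new point is that on $\vec{d}$ one combines $s''(d)=s(d)$ with $s(d)=s'(d)$, the latter again coming from $\dep(\vec{a},d)$ and $s(\vec{a})=s'(\vec{a})$.

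The argument is routine once the partition is fixed; the single substantive step, and the place where the precise form of the right-hand side is forced, is the treatment of the overlap $\vec{d}$. It is exactly because one witness $s''$ must reproduce $s$ on the $\vec{b}$-side and $s'$ on the $\vec{c}$-side that a shared variable cannot be handled by pure (pairwise) independence and must instead be pinned down by $\vec{a}$, which is what the conjuncts $\dep(\vec{a},d)$ encode. I therefore expect no real obstacle beyond keeping the variable classification straight.
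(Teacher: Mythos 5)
Your proof is correct and follows essentially the same route as the paper's: both directions reuse the single witness $s''$ supplied by the independence atom, and the overlap variables $\vec{d}$ are handled exactly as in the paper via the conjuncts $\dep(\vec{a},d)$ together with agreement on $\vec{a}$. Your explicit partition $\vec{b} = \vec{b_0} \sqcup (\vec{b}\cap\vec{a}) \sqcup \vec{d}$ just makes the paper's case analysis of $\vec{b}-\vec{b}_0$ (and symmetrically $\vec{c}-\vec{c}_0$) more systematic; there is no substantive difference.
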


\begin{proof}
Assume that $\M \models_X \vec{b} \bot_{\vec{a}} \vec{c}$. Then clearly $\vec{b_0}\bot_{\vec{a}}\vec{c_0}$. For $\bigwedge_{d\in \vec{d}} \dep(\vec{a},d)$, let $d \in \vec{d}$ and $s,s'\in X$ be such that $s(\vec{a})=s'(\vec{a})$. Then by the assumption there is $s'' \in X$ such that $s''(\vec{a}\vec{b}\vec{c})=s(\vec{a}\vec{b})s'(\vec{c})$. Because $d$ is listed in both $\vec{b}$ and $\vec{c}$, it follows that $s(d)=s'(d)$.

Suppose then $\M \models_X \vec{b_0}\bot_{\vec{a}}\vec{c_0} \ja \bigwedge_{d\in \vec{d}} \dep(\vec{a},d)$. Let $s,s' \in X$ be such that $s(\vec{a})=s'(\vec{a})$. By the assumption there is $s''\in X$ such that $s''(\vec{a}\vec{b}_0\vec{c}_0)=s(\vec{a}\vec{b}_0)s'(\vec{c}_0)$. We want to show that $s''(\vec{a}\vec{b}\vec{c})=s(\vec{a}\vec{b})s'(\vec{c})$. Consider first variables $x$ listed in $\vec{b} - \vec{b}_0$. If $x$ is listed in $\vec{a}$, then $s''(x)=s(x)$ as wanted. Assume that $x$ is listed in $\vec{c}-\vec{a}$. Then $x \in \vec{d}$, and thus $s''(x)=s(x)$ follows from $s''(\vec{a})=s(\vec{a})$.

For variables $x$ is listed in $\vec{c} - \vec{c}_0$ the proof of $s''(x)=s'(x)$ is analogous because $s(\vec{a})=s'(\vec{a})$. This concludes the proof.
\end{proof}

Now we can prove the following proposition. In the proof we will present a translation from independence logic to $\ESO$, where independence atoms are coded by relation variables preserving the arity of the atoms. Note that the translation presented in \cite{gradel10} does not preserve this property.
\begin{prop}\label{geq}
$\ind{k}\leq \ESOfarity{k}.$
\end{prop}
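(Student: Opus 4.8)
The plan is to define a translation $\phi \mapsto \phi^*$ from $\ind{k}$ to $\ESOfarity{k}$ by structural induction, in which each independence atom is replaced by an existentially quantified relation variable whose arity is bounded by $k$. The key observation is that an independence atom $\indep{\vec a}{\vec b}{\vec c}$ with at most $k+1$ distinct variables can be "witnessed" by recording, for each assignment in the team, the tuple of values it takes on the variables occurring in the atom; since that tuple has length at most $k+1$ but the atom's defining condition only constrains the combinatorics of the $\vec a\vec b$- and $\vec a\vec c$-projections, I expect to encode it with relations of arity at most $k$.

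First I would use Lemma~\ref{disjoint} to reduce to the case where $\vec b$ and $\vec c$ share no variables (outside $\vec a$): after splitting off the conjuncts $\dep(\vec a,d)$ for the shared variables $d$, each dependence atom has arity at most $k$ and is handled by the standard $\ESO$ translation of dependence logic (Theorem~\ref{thm:depESO}, arity-preservingly via \cite{durand11}), and the remaining atom $\vec b_0\bot_{\vec a}\vec c_0$ has disjoint sides. Next I would set up the global encoding: following the standard team-to-ESO method, a team $X$ over the free variables is represented by a relation, and I would introduce, for each subformula, a fresh relation variable $R_\psi$ of arity equal to the number of free variables of $\psi$ (which, for the atomic case, is at most $k+1$, but whose constraints factor through projections of arity $\le k$). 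The inductive clauses for $\vee$, $\wedge$, $\exists$, $\forall$ are the usual ones, and the point to check is that none of them raises the arity bound above $k$.

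The main obstacle, and the reason the paper flags that the $\cite{gradel10}$ translation does \emph{not} preserve arity, is the atomic clause itself: the naive encoding of $\indep{\vec a}{\vec b}{\vec c}$ asserts existence of a common refinement $s''$ agreeing with $s$ on $\vec a\vec b$ and with $s'$ on $\vec c$, which literally quantifies over tuples of length $|\vec a\vec b\vec c|$ and so seems to need a relation of arity $k+1$ or more. The work is to show that once the sides are disjoint (via Lemma~\ref{disjoint}) one can express the independence condition using only the relation that records the team itself, together with first-order quantification, so that the only second-order variables introduced have arity $\le k$. Concretely I would express $\vec b_0\bot_{\vec a}\vec c_0$ by a first-order formula saying that for every pair of tuples in the $\vec a\vec b_0$-projection and the $\vec a\vec c_0$-projection agreeing on $\vec a$, their amalgamation lies in the $\vec a\vec b_0\vec c_0$-projection of the team, and verify that each such projection and each quantified witness relation has arity bounded by $k$.

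Finally I would assemble the induction: prove by structural induction on $\phi \in \ind{k}$ that $\M\models_X\phi$ iff $(\M,R_X)$ satisfies $\phi^*$, where $R_X$ codes $X$, and conclude that the resulting $\ESO$ sentence lies in $\ESOfarity{k}$ because every quantified function and relation introduced has arity at most $k$. I do not expect difficulty in the $\vee,\wedge,\exists,\forall$ cases beyond the bookkeeping already present in the dependence-logic translation of \cite{durand11}; the genuine content is concentrated in the arity-preserving treatment of the independence atom described above.
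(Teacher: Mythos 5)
Your proposal correctly isolates the two preliminary observations that the paper's proof also starts from---the reduction via Lemma~\ref{disjoint} to independence atoms $\vec b_0 \bot_{\vec a} \vec c_0$ with pairwise disjoint sides plus dependence atoms of arity at most $k$, and the fact that the naive encoding of an independence atom seems to require a relation of arity $k+1$---but the mechanism you propose for overcoming that obstacle does not work, and the actual solution is missing. The problem is your appeal to ``the relation that records the team itself.'' In $\ESOfarity{k}$ \emph{every} quantified relation and function must have arity at most $k$, while a team over the variables $x_1,\ldots,x_n$ of a prenex sentence is an $n$-ary object, and even its projection onto the variables $\vec a\vec b_0\vec c_0$ of a single atom has arity up to $k+1$. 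So your atomic clause---``for every pair of tuples in the $\vec a\vec b_0$- and $\vec a\vec c_0$-projections agreeing on $\vec a$, their amalgamation lies in the $\vec a\vec b_0\vec c_0$-projection of the team''---quantifies exactly the relation that the arity bound forbids, and nothing in the proposal explains how membership of the amalgam in the team could be expressed otherwise. The same defect already infects your compositional set-up: the relations $R_\psi$ coding subteams have arity equal to the number of free variables of $\psi$, which for subformulas inside the quantifier prefix is up to $n$, the total number of quantified variables, not $k+1$; so the ``usual'' clauses for $\vee,\exists,\forall$ themselves break the bound. Note also that independence atoms are not downward closed, so satisfaction genuinely requires witnessing assignments \emph{inside} the team; the issue cannot be dissolved by locality alone.

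The paper fills this gap with an idea your proposal lacks: it never codes teams as relations at all. After putting $\phi$ in prenex form $Q^1x_1\ldots Q^nx_n\,\theta$ (Theorem~\ref{aputulos2}), it quantifies, for each independence atom, only two relations $S_{\vec i},T_{\vec i}$ of arity at most $k$, intended as the $\vec a\vec b$- and $\vec a\vec c$-projections of the subteam reaching that atom, together with functions of arity at most $k$ for the dependence atoms. The statement ``the amalgam $s(\vec a\vec b)s'(\vec c)$ extends to an assignment belonging to the subteam'' is then expressed by a purely first-order device, the side condition $\Omega$: it existentially quantifies the remaining variables $\vec z_{\vec i}$, requires $\bigwedge_{\vec j\le\vec i}\varphi_{\vec j}$ (so the extended assignment passes through the correct conjuncts and disjuncts down to the atom), and then \emph{replays} the whole prefix on fresh variables $x'_1,\ldots,x'_n$ subject to the consistency formula $\chi$, which forces the existential choices of the replay to repeat the original ones whenever all preceding values agree. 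This makes ``being a member of the team'' first-order expressible as ``being a play of the prefix consistent with the verifier's strategy,'' which the converse direction exploits by reading the team off the semantic game as the set of all plays (of both the unprimed and the primed variables) under a winning strategy. Without this replay-and-consistency construction, or some substitute for it, your induction cannot be completed within the arity bound.
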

\begin{proof}
Let $\phi \in \ind{k}$. By Theorem \ref{aputulos2} we may assume that $\phi$ is in prenex normal form $Q^1 x_1 \ldots Q^n x_n \theta$ where $\theta$ is a quantifier-free formula. By Lemma \ref{disjoint} we may assume that each independence atom in $\theta$ is either of the form $\dep(\vec{z},y)$ or $\vec{b} \bot_{\vec{a}} \vec{c}$ where 
\begin{itemize}
\item $y$ is not listed in $\vec{z}$,
\item $\vec{a}$, $\vec{b}$ and $\vec{c}$ do not share any  variables,
\item $|\vec{z}|\leq k$ and $|\vec{a}\vec{b}\vec{c}| \le k+1$. 
\end{itemize}

Let us next consider the subformulas of $\theta$. We will enumerate the subformulas of $\theta$ by $\theta_{\vec{i}}$ where $\vec{i}$ is a binary sequence encoding the location of the subformula in $\theta$. Let $\theta_{\lambda}:=\theta$ where $\lambda$ is the empty sequence. If
 $\theta_{\vec{i}}$ is a conjunction (or a disjunction), then we denote its conjuncts (or the disjuncts) as $\theta_{\vec{i}0}$ and $\theta_{\vec{i}1}$.
Now let $S:=\{\vec{i}\mid \theta_{\vec{i}}\textrm{ is a subformula of } \theta\}$, and let $D$ and $I$ be the subsets of $S$ consisting of sequences $\vec{i}$ for which $\theta_{\vec{i}}$ is a dependence atom or an independence atom, respectively. Let $\leq$ be a partial order in $S$ where $\vec{i} \leq \vec{j}$ if $\vec{i}\vec{k}=\vec{j}$ for some binary $\vec{k}$. Then $\vec{i} \leq \vec{j}$ if and only if $\theta_{\vec{j}}$ is a subformula of $\theta_{\vec{i}}$.

Next we will define a $\Phi \in \ESOfarity{k}$ equivalent to $\phi$. First we define $\varphi_{\vec{i}}$ for each $\vec{i} \in S$ inductively as follows:
\begin{itemize}
\item $\varphi_{\vec{i}} := \theta_{\vec{i}}$ if $\theta_{\vec{i}}$ is a first-order atom,
\item $\varphi_{\vec{i}} := S_{\vec{i}}(\vec{a}\vec{b}) \ja T_{\vec{i}}(\vec{a}\vec{c})$ if $\theta_{\vec{i}}$ is $\indep{\vec{a}}{\vec{b}}{\vec{c}}$,
\item $\varphi_{\vec{i}} := f_{\vec{i}}(\vec{z})=y$ if $\theta_{\vec{i}} $ is $ \dep(\vec{z},y)$,
\item $\varphi_{\vec{i}} := \varphi_{\vec{i}0} \ja \varphi_{\vec{i}1}$ if $\theta_{\vec{i}}$ is $\theta_{\vec{i}0} \ja \theta_{\vec{i}1}$,
\item $\varphi_{\vec{i}} := \varphi_{\vec{i}0} \tai \varphi_{\vec{i}1}$ if $\theta_{\vec{i}}$ is $\theta_{\vec{i}0} \tai \theta_{\vec{i}1}$.
\end{itemize}
Now let $\varphi:= \varphi_{\lambda}$. Then $\varphi$ is a quantifier-free first-order formula sharing the structure of $\theta$ where the dependence and independence atoms are interpreted using new function symbols $f_{\vec{i}}$ and  relation symbols $S_{\vec{i}}$ and $T_{\vec{i}}$, respectively. Let $\vec{z}_{\vec{i}}$, for $\vec{i} \in I$, list the variables in $\{x_1, \ldots ,x_n\} \setminus \Fr (\theta_{\vec{i}})$. In the following, for example, $\on (S_{\vec{i}})_{\vec{i} \in I} $ denotes the prefix $\on S_{\vec{i}_1} \ldots \on S_{\vec{i}_m}$ where $\vec{i}_1, \ldots , \vec{i}_m$ enumerates $I$. So let us define $\Phi$ as

\begin{align}\label{Phi}
\on (S_{\vec{i}})_{\vec{i}\in I} (T_{\vec{i}})_{\vec{i}\in I}  (f_{\vec{i}})_{\vec{i}\in D} (Q^1 x_1 \ldots Q^n x_n \varphi \ja \Omega)
\end{align}
where
\begin{align}\label{Omega}
\Omega := \bigwedge_{\vec{i}\in I} [\forall \vec{a}\vec{b}\vec{c} (S_{\vec{i}}(\vec{a}\vec{b}) \ja T_{\vec{i}}(\vec{a}\vec{c}) )\rightarrow \on \vec{z}_{\vec{i}} (\bigwedge_{\vec{j} \leq \vec{i}}\varphi_{\vec{j}} \ja Q^1 x'_1 \ldots Q^n x'_n (\varphi' \ja \chi))]
\end{align}
where $\varphi':=\varphi(x'_1/x_1)\ldots(x'_n/x_n)$ and
\begin{align}\label{chi}
\chi:= \bigwedge_{\substack{1\leq k\leq n\\Q^k=\on}}  (x_1=x'_1 \ja \ldots \ja x_{k-1}=x'_{k-1}) \rightarrow x_{k} =x'_{k}.
\end{align}

The idea behind $\Phi$ is that the relation variables $S_{\vec{i}}$ and $T_{\vec{i}}$, for $\vec{i} \in I$, encode a subteam $X_{\vec{i}}$ that satisfies $\indep{\vec{a}}{\vec{b}}{\vec{c}}$. Then $\Omega$ will ensure that for each $s,s' \in X_{\vec{i}}$ with $s(\vec{a})=s'(\vec{a})$ there is $s''$ corresponding to the values of $\vec{a} \vec{b}\vec{c}$ and $\vec{z}_{\vec{i}}$ such that $s''(\vec{a}\vec{b}\vec{c})=s(\vec{a}\vec{b})s'(\vec{b})$. The variables $x_i'$ and $\chi$ will ensure that $s''\in X_{\vec{i}}$.
We will now prove that
$$\M \models \phi \Leftrightarrow \M \models \Phi.$$

\textit{Only if-part:} Assume that $\M \models \phi$. Then there are functions
$$F_i:X[F_1/x_1] \ldots [F_{i-1}/x_{i-1}] \rightarrow \Po (M) \setminus \{\emptyset\},$$
for $1\leq i \leq n$, such that
$$\M \models_Y  \theta$$
when $Y:=\{\emptyset\}[F_1/x_1] \ldots [F_n/x_n]$. Note that $F_i(s)= M$ if $Q^i = \forall$.

Let us then construct teams $Y_{\vec{i}}$, for $\vec{i} \in S$, such that $\M  \models_{Y_{\vec{i}}} \theta_{\vec{i}}$, as follows. Let $Y_{\lambda}:= Y$.
\begin{itemize}
\item Assume that $\M \models_{Y_{\vec{i}}}  \theta_{\vec{i}}$ where $\theta_{\vec{i}} = \theta_{\vec{i}0} \ja \theta_{\vec{i}1}$. Then $Y_{\vec{i}0}:= Y_{\vec{i}}$ and $Y_{\vec{i}1}:=Y_{\vec{i}}$.

\item Assume that $\M \models_{Y_{\vec{i}}} \theta_{\vec{i}}$ where $\theta_{\vec{i}} = \theta_{\vec{i}0} \tai \theta_{\vec{i}1}$. Then choose $Y_{\vec{i}0} \cup Y_{\vec{i}1} = Y_{\vec{i}}$ so that $\M \models_{Y_{\vec{i}0}}  \theta_{\vec{i}0}$ and $\M \models_{Y_{\vec{i}1}} \theta_{\vec{i}1}$.
\end{itemize}

We then note that 
\begin{align}
\label{indep}
\M \models_{Y_{\vec{i}}}\indep{\vec{a}}{\vec{b}}{\vec{c}}\hspace{0,5cm} &\textrm{    if    } \hspace{0,5cm} \theta_{\vec{i}}\textrm{ is }\indep{\vec{a}}{\vec{b}}{\vec{c}},\\
\label{dep}
\M \models_{Y_{\vec{i}}}  \dep(\vec{z},y) \hspace{0,5cm}&\textrm{     if    }\hspace{0,5cm} \theta_{\vec{i}}\textrm{ is }\dep(\vec{z},y).
\end{align}
Now, for $\theta_{\vec{i}}$ of the form $\indep{\vec{a}}{\vec{b}}{\vec{c}}$, the interpretations of $S_{\vec{i}}$ and $T_{\vec{i}}$ will be the following:
\begin{align*}
&S_{\vec{i}}^{\M} := \{s(\vec{a}\vec{b}) \mid s\in Y_{\vec{i}}\},\\
&T_{\vec{i}}^{\M} := \{s(\vec{a}\vec{c}) \mid s\in Y_{\vec{i}}\}.
\end{align*}
For $\theta_{\vec{i}}$ of the form $\dep(\vec{z},y)$ we interpret $f_{\vec{i}}$ as follows:
\[\begin{array}{ll}
$$f^{\M}_{\vec{i}}(\vec{a}):=  \left\{\begin{array}{l l}
   b &\textrm{ if } s(\vec{z}y)=\vec{a}b \textrm{ for some } s\in Y_{\vec{i}},\\
    0 & \quad \textrm{otherwise}\\
  \end{array}\right.$$
\end{array}\]
where $0 \in M$ is arbitrary. Now $f_{\vec{i}}$ is well defined by \eqref{dep}. Let then $\M^* := (\M, \vec{S}^{\M},\vec{T}^{\M},\vec{f}^{\M})$. We will show that
$$\M^* \models Q^1 x_1 \ldots Q^n x_n \varphi \ja \Omega.$$
Consider the first conjunct. For each $x_i$ with $Q^i = \on$ we can choose a value for it so that the values of $x_1, \ldots ,x_i$ agree with some $s \in Y$. Thus it suffices to show that, for $s \in Y$,
$$\M^* \models_s \varphi.$$
Since $\varphi$ is a first-order formula, by Theorem \ref{flatness} it suffices to show that $\M^* \models_Y \varphi$. This can be done inductively: For each atomic $\varphi_{\vec{i}}$, $\M^* \models_{Y_{\vec{i}}} \varphi_{\vec{i}}$ by the definitions. If $\M^* \models_{Y_{\vec{i}0}}\varphi_{\vec{i}0}$ and $\M^* \models_{Y_{\vec{i}1}} \varphi_{\vec{i}1}$, and $\varphi_{\vec{i}}$ is either disjunction or conjunction of $\varphi_{\vec{i}0}$ and $\varphi_{\vec{i}1}$, then $\M^* \models_{Y_{\vec{i}}} \varphi_{\vec{i}}$ by the construction of $Y_{\vec{i}}$. This concludes the claim and thus the first conjunct part.

Next we will to show that $\M^* \models \Omega$ where $\Omega$ is the formula
$$\bigwedge_{\vec{i}\in I} [\forall \vec{a}\vec{b}\vec{c} (S_{\vec{i}}(\vec{a}\vec{b}) \ja T_{\vec{i}}(\vec{a}\vec{c}) )\rightarrow \on \vec{z}_{\vec{i}} (\bigwedge_{\vec{j} \leq \vec{i}} \varphi_{\vec{j}} \ja Q^1 x'_1 \ldots Q^n x'_n (\varphi' \ja \chi))].$$
Let $\vec{i} \in I$ and assume that $\theta_{\vec{i}} = \indep{\vec{a}}{\vec{b}}{\vec{c}}$. Let $\vec{\a}\vec{\b}\vec{\g}$ be such that $\vec{\a}\vec{\b}\in S^{\M}_{\vec{i}} $ and  $\vec{\a}\vec{\g}\in T^{\M}_{\vec{i}} $. Then there are $s,s' \in Y_{\vec{i}}$ such that $s(\vec{a}\vec{b})=\vec{\a}\vec{\b}$ and $s'(\vec{a}\vec{c})=\vec{\a}\vec{\g}$. By \eqref{indep} we can choose $s'' \in Y_{\vec{i}}$ such that $s''(\vec{a}\vec{b}\vec{c})=s(\vec{a}\vec{b})s'(\vec{c})$. Let us then choose the values for $\vec{z}_{\vec{i}}$ according to $s''$. Then all the values of $x_1, \ldots ,x_n$ agree with $s''$. Now,  
 since  $\M^* \models_{Y_{\vec{j}}} \varphi_{\vec{j}}$ for all  $\vec{j}$, and $s''\in Y_{\vec{j}}$ for $\vec{j} \leq \vec{i}$, it follows by Theorem \ref{flatness}
$$\M^* \models_{s''} \bigwedge_{\vec{j} \leq \vec{i}}\varphi_{\vec{j}}.$$
Now it suffices to show that
$$\M^* \models_{s''} Q^1 x'_1 \ldots Q^n x'_n (\varphi' \ja \chi).$$
For each $x'_i$ with $Q^i=\on$ we choose a value for it so that, for some $t \in Y$, the values of $x'_1, \ldots ,x'_i$ are $t(x_1), \ldots ,t(x_i)$. In particular, if the values of $x'_1, \ldots ,x'_{i-1}$ agree with $s''$, then we choose $x'_i$ according to $s''$ also. Let $s^*$ be an extension of $s''$ which is constructed according to these rules. Now using the fact that $\M^* \models_{t} \varphi$ for all $t\in Y$, and the way $s^*$ was chosen,  we get
$$\M^* \models_{s^*} \varphi' \ja \chi.$$
Hence $\M \models \Phi$. This concludes the \textit{only if-part}.

\textit{If-part}: Assume that $\M \models \Phi$. Then we can find interpretations $\vec{S}^{\M}$, $\vec{T}^{\M}$ and $\vec{f}^{\M}$ such that
$$\M^*\models Q^1 x_1 \ldots Q^n x_n \varphi \ja \Omega$$
when $\M^*:= (\M, \vec{S}^{\M},\vec{T}^{\M},\vec{f}^{\M})$. Consider the usual semantic game for first-order logic where player $\on$ plays the role of verifier and player $\forall$ plays the role of falsifier. Then there is a winning strategy for player $\on$ in the semantic game for $Q^1 x_1 \ldots Q^n x_n \varphi \ja \Omega$ over $\M^*$. Let $Y$ consist of assignments $s: \{x_1, \ldots ,x_n\} \rightarrow M$ corresponding to every possible play of $x_1, \ldots ,x_n$ where player $\on$ follows her winning strategy. Analogously, let $Y'$ consist of assignments  $s: \{x_1, \ldots ,x_n\} \rightarrow M$ that correspond to every possible play of $x'_1, \ldots ,x'_n$ where player $\on$ follows her winning strategy. Let $X:=Y\cup Y'$. We will show that
$$\M \models_X \theta.$$

We know that $\M^* \models_X \varphi$. Let us now define $X_{\vec{i}}$, for $\vec{i} \in S$, as follows. Recall that $\varphi_{\lambda}=\varphi$ where $\lambda$ is the empty sequence. We also let $X_{\lambda}:=X$.
\begin{itemize}
\item If $\M^* \models_{X_{\vec{i}}}\varphi_{\vec{i}}$ where $\varphi_{\vec{i}} = \varphi_{\vec{i}0}\ja \varphi_{\vec{i}1}$, then we let $X_{\vec{i}0} :=X_{\vec{i}} $ and $X_{\vec{i}1} :=X_{\vec{i}} $.
\item If $\M^* \models_{X_{\vec{i}}} \varphi_{\vec{i}}$ where $\varphi_{\vec{i}} = \varphi_{\vec{i}0}\tai \varphi_{\vec{i}1}$, then we let $X_{\vec{i}0} :=\{s \in X_{\vec{i}}\mid \M^* \models_s \varphi_{\vec{i}0}\}$ and $X_{\vec{i}1} :=\{s \in X_{\vec{i}}\mid \M^* \models_s \varphi_{\vec{i}1}\}$.
\end{itemize}
From the construction it follows that $\M^* \models_{X_{\vec{i}}} \varphi_{\vec{i}}$, for $\vec{i} \in S$, and that $X_{\vec{i}0}\cup X_{\vec{i}1} =X_{\vec{i}} $ if $\varphi_{\vec{i}}$ is a disjunction.
We will now show that for each atomic $\theta_{\vec{i}}$, $\M \models_{X_{\vec{i}}} \theta_{\vec{i}}$:
\begin{enumerate}
\item If $\theta_{\vec{i}}$ is a first-order atom, then the claim follows from $\theta_{\vec{i}}=\varphi_{\vec{i}}$.

\item If $\theta_{\vec{i}}$ is $\dep(\vec{z},y)$, then the claim follows from $\M^* \models_{X_{\vec{i}}} f_{\vec{i}}(\vec{z})=y$.

\item Assume that $\theta_{\vec{i}}$ is $\indep{\vec{a}}{\vec{b}}{\vec{c}}$. Then $\M^* \models_{X_{\vec{i}}} S_{\vec{i}}(\vec{a}\vec{b}) \ja T_{\vec{i}}(\vec{a}\vec{c})$. Let $s,s'\in X_{\vec{i}}$ be such that $s(\vec{a})=s'(\vec{a})$. We have to show that there is $s''\in X_{\vec{i}}$ such that $s''(\vec{a}\vec{b}\vec{c})=s(\vec{a})s(\vec{b})s'(\vec{c})$. Now $\M^* \models \Omega$, so consider a play in the semantic game where player $\forall$ chooses first the conjunct with index $\vec{i}$ from $\Omega$, and then chooses $s(\vec{a})s(\vec{b})s'(\vec{c})$ as values for $\vec{a}\vec{b}\vec{c}$. Since $s(\vec{a})s(\vec{b}) \in S_{\vec{i}}^{\M}$ and $s(\vec{a})s'(\vec{c}) \in T_{\vec{i}}^{\M}$, then
 player $\on$ plays according to her strategy and chooses values for $\vec{z}_{\vec{i}}$ so that
$$\M^* \models_{s''}  \bigwedge_{\vec{j} \leq \vec{i}}\varphi_{\vec{j}} \ja Q^1 x'_1 \ldots Q^n x'_n (\varphi' \ja \chi)$$
where $s''$ is the assignment agreeing with the chosen values for $\vec{a}\vec{b}\vec{c}$ and $\vec{z}_{\vec{i}}$. Now we let player $\forall$ play each $x'_i$ with $Q^i=\forall$ as $s''(x_i)$. Then because of $\chi$ (defined in \eqref{chi}) player $\on$ must also play each $x'_i$ with $Q^i = \on$ as $s''(x_i)$. Hence $s''$ corresponds to a play of $x'_1, \ldots ,x'_n$, and thus $s'' \in X$. 

Since $\M^* \models_{s''}  \bigwedge_{\vec{j} \leq \vec{i}}\varphi_{\vec{j}}$, it is a straightforward induction to show that $s'' \in X_{\vec{i}}$. This concludes the step $3.$
\end{enumerate}
Now using the previous, a straightforward backward induction shows that $\M \models_{X} \theta$. It then suffices to show that there are functions 
$$F_i: \{\emptyset\}[F_1/x_1]\ldots [F_{i-1}/x_{i-1}] \rightarrow \Po (M)\setminus \{\emptyset\}$$
such that $F_i(s)= M$ if $Q^i = \forall$, and that
$$X=\{\emptyset\}[F_1/x_1] \ldots [F_n/x_n].$$
We define these functions inductively so that $\{\emptyset\}[F_1/x_1]\ldots [F_i/x_i] = X \upharpoonright \{x_1, \ldots ,x_i\}$, for $1 \leq i \leq n$. Assume that we have defined $F_1, \ldots ,F_i$ successfully. We will define $F_{i+1}$ as wanted. Assume first that $Q^{i+1} = \on$. Then for $s \in \{\emptyset\}[F_1/x_1]\ldots [F_i/x_i]$, we let 
$$F_{i+1}(s) = \{t(x_{i+1}) \mid t \in X,t \upharpoonright \{x_1, \ldots ,x_i \}= s\}.$$ 
By the induction assumption $F_{i+1}(s)$ is non-empty, though it may not be singleton in case there are multiple plays where values of $x_1, \ldots ,x_i$ (or $x'_1, \ldots ,x'_i$) agree with $s$. We note that $$\{\emptyset\}[F_1/x_1]\ldots [F_{i+1}/x_{i+1}] = X \upharpoonright \{x_1, \ldots ,x_{i+1}\}.$$

Assume then that $Q^{i+1} = \forall$.  For $s \in \{\emptyset\}[F_1/x_1]\ldots [F_i/x_i]$, we let $F_{i+1}(s)= \Po (M)$ and note that 
$$X \upharpoonright \{x_1, \ldots ,x_{i+1}\} \subseteq \{\emptyset\}[F_1/x_1]\ldots [F_{i+1}/x_{i+1}].$$
For the other direction, assume that $s \in \{\emptyset\}[F_1/x_1]\ldots [F_{i}/x_{i}]$ and let $a \in M$. We show that $s(a/x) \in X \upharpoonright \{x_1, \ldots ,x_{i+1}\}$. By the induction assumption $s \in X \upharpoonright \{x_1, \ldots ,x_i\}$, and thus there is a play of $x_1, \ldots ,x_n$ (or $x'_1, \ldots ,x'_n$) that agrees with $s$ in the first $i$ variables. Let $s'$ be the assignment corresponding to this play. Now instead of choosing $s'(x_{i+1})$ (or $s'(x'_{i+1})$) at move $i+1$, player $\forall$ can choose $a$ for $x_{i+1}$ (or for $x'_{i+1}$).  Let $t$ be an assignment that corresponds to some play with these moves for the first $i+1$ variables. Then $t \in X$ and $t \upharpoonright \{x_1, \ldots ,x_{i+1}\} = s(a/x_{i+1})$. This concludes the proof, and thus also the \textit{only if-part}.

Note that in $\Phi$ each function or relation variable has an arity at most $k$. This concludes the proof.
\end{proof}

\begin{thm}\label{teoreema}
$\ESOfarity{k} = \ind{k}.$
\end{thm}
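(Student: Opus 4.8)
The plan is to prove Theorem~\ref{teoreema} by combining the two inclusions established in the preceding propositions. The statement $\ESOfarity{k} = \ind{k}$ asserts equality of expressive power between the arity-$k$ fragment of functional existential second-order logic and the $k$-ary fragment of independence logic. Since logical equivalence of logics is defined in the excerpt as mutual containment (every sentence of one is equivalent to some sentence of the other, and vice versa), it suffices to verify both directions separately.

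First I would invoke Proposition~\ref{leq}, which gives $\ESOfarity{k} \leq \ind{k}$: every $\ESOfarity{k}$-sentence is equivalent to some $\ind{k}$-sentence. That proposition already handles the passage from arity-$k$ $\ESO$ through the Durand--Kontinen normal form $\ddep{k}$ and then translates each bounded-arity dependence atom $\dep(\vec z, y)$ into the independence atom $\indep{\vec z}{y}{y}$, whose variable count $|\vec z\, y| \leq k+1$ keeps us inside $\ind{k}$. Second, I would invoke Proposition~\ref{geq}, which gives the converse $\ind{k} \leq \ESOfarity{k}$ via the arity-preserving coding of independence atoms by fresh relation variables $S_{\vec i}, T_{\vec i}$ of arity at most $k$ (together with function symbols $f_{\vec i}$ for dependence atoms), all collected into the sentence $\Phi$ of \eqref{Phi}.

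Combining the two containments yields $\ESOfarity{k} = \ind{k}$, which is exactly the assertion of the theorem. The entire argument is therefore a one-line synthesis: the real content lives in Propositions~\ref{leq} and~\ref{geq}, both already proved in the excerpt.

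I do not expect any genuine obstacle at this stage, since both inclusions are in hand; the only point requiring a moment of care is confirming that the arity bound is respected in \emph{both} directions simultaneously — that the translation of Proposition~\ref{leq} never introduces an independence atom on more than $k+1$ variables, and that the second-order quantifiers in $\Phi$ from Proposition~\ref{geq} never exceed arity $k$ (the latter being exactly the remark closing the proof of Proposition~\ref{geq}, and the reason its coding is preferred over the one in~\cite{gradel10}). Granting these, the theorem follows immediately.
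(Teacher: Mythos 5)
Your proof is correct and matches the paper exactly: Theorem~\ref{teoreema} is proved there in one line as an immediate consequence of Propositions~\ref{leq} and~\ref{geq}, which is precisely your synthesis. Your added remarks on the arity bounds ($|\vec z\, y| \leq k+1$ in one direction, relation variables of arity at most $k$ in the other) correctly identify where the real content of those propositions lies, but introduce nothing beyond what the paper already establishes.
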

\begin{proof}
Follows from Propositions \ref{leq} and \ref{geq}.
\end{proof}

This gives us immediately a corollary regarding inclusion logic. Recall that $\inc{k}$ denotes the class of inclusion logic sentences in which inclusion atoms of width at most $k$ (i.e. atoms of the form $\vec{a}\subseteq \vec{b}$ where $|\vec{a}|=|\vec{b}|\leq k$) may appear.


\begin{thm}
Assume $k \geq 2$. Then $\inc{k}\leq \ESOfarity{k}$.
\end{thm}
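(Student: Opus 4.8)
The plan is to prove $\inc{k} \leq \ESOfarity{k}$ by showing that every inclusion logic sentence with inclusion atoms of width at most $k$ can be translated into an equivalent $\ESO$ sentence in which all quantified functions and relations have arity at most $k$. The natural strategy is to adapt the translation machinery already developed for the proof of Proposition \ref{geq}, since inclusion logic (with lax semantics) is a syntactic fragment of independence logic by Theorem \ref{two}(\ref{thm:inc_ind}). The obstacle is that the naive route---simply rewriting each inclusion atom $\vec{a} \subseteq \vec{b}$ via the $\indNRlogic$ expression of Theorem \ref{two}(\ref{thm:inc_ind}) and then invoking $\ind{k} = \ESOfarity{k}$---does \emph{not} preserve arity: that translation introduces fresh universally quantified variables $v_1, v_2, \vec{z}$ and a pure independence atom $\indepc{\vec z}{v_1 v_2}$ whose total variable count is $|\vec z| + 2 = k+2$, giving an independence atom in $\ind{k+1}$ rather than $\ind{k}$. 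So I must give a direct, arity-preserving $\ESO$-translation for inclusion atoms rather than going through independence atoms.

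First I would apply Theorem \ref{aputulos2} to put the given $\phi \in \inc{k}$ into prenex normal form $Q^1 x_1 \ldots Q^n x_n\, \theta$ with $\theta$ quantifier-free, preserving all inclusion atoms (so each remains of width at most $k$) and the quantifier count. Then I would set up the same subformula indexing $\theta_{\vec i}$, the index sets $S$, and the ordering $\leq$ as in the proof of Proposition \ref{geq}. The key new ingredient is to encode each inclusion atom $\vec{a} \subseteq \vec{b}$ occurring as $\theta_{\vec i}$ by a single fresh relation variable $R_{\vec i}$ of arity $|\vec a| = |\vec b| \leq k$, intended to hold the common set of tuples realised by $\vec a$ (equivalently by $\vec b$) in the relevant subteam $X_{\vec i}$. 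Concretely, in the first-order skeleton $\varphi$ I would replace $\theta_{\vec i} = (\vec a \subseteq \vec b)$ by $R_{\vec i}(\vec a)$, and then add a correctness clause $\Omega$ asserting, for each inclusion index $\vec i$, that every tuple in $R_{\vec i}$ is actually realised as a value of $\vec b$ in $X_{\vec i}$. This second containment is exactly the content of $X_{\vec i}(\vec a) \subseteq X_{\vec i}(\vec b)$: the clause $\varphi$ forces $X_{\vec i}(\vec a) \subseteq R_{\vec i}$, while $\Omega$ forces $R_{\vec i} \subseteq X_{\vec i}(\vec b)$, and together these give $X_{\vec i}(\vec a) \subseteq X_{\vec i}(\vec b)$.

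The correctness clause $\Omega$ would have, for each inclusion index $\vec i$, a conjunct of the shape
\[
\forall \vec a\, \bigl( R_{\vec i}(\vec a) \rightarrow \on \vec{z}_{\vec i}\, ( \textstyle\bigwedge_{\vec j \leq \vec i} \varphi_{\vec j} \ja Q^1 x'_1 \ldots Q^n x'_n (\varphi' \ja \chi) \ja \vec b = \vec a ) \bigr),
\]
mirroring $\Omega$ from \eqref{Omega}: the existential block over the free-variable witnesses $\vec z_{\vec i}$ together with the $\chi$-gadget of \eqref{chi} selects an actual member $s''$ of the subteam $X_{\vec i}$, and the final equation $\vec b = \vec a$ demands that this member realise the prescribed tuple $\vec a$ as its value of $\vec b$. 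I would then prove $\M \models \phi \Leftrightarrow \M \models \Phi$ by essentially the same two directions as in Proposition \ref{geq}: the only-if part interprets $R_{\vec i}^{\M} := \{ s(\vec a) : s \in Y_{\vec i} \}$ and verifies both conjuncts using flatness (Theorem \ref{flatness}) and the semantics TS-inc; the if part extracts a winning strategy in the first-order semantic game and reconstructs the team, using $\Omega$ to produce the witnessing assignment realising each required $\vec b$-value. I expect the main obstacle to be purely bookkeeping rather than conceptual: one must check that no auxiliary relation or function of arity exceeding $k$ sneaks in (the $R_{\vec i}$ have arity $|\vec a| \leq k$, matching the $\ind k$-to-$\ESOfarity k$ arity control of Proposition \ref{geq}), and that the $\vec b = \vec a$ equation together with the $\chi$-gadget correctly pins down membership in the intended subteam $X_{\vec i}$; the hypothesis $k \geq 2$ is presumably needed to accommodate the auxiliary first-order variables in $\chi$ without inflating arity. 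Since $\phi$ is an inclusion logic sentence the dependence-atom case simply does not arise, so the translation is in fact a strict simplification of the one in Proposition \ref{geq}.
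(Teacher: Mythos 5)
Your core idea---coding each inclusion atom $\theta_{\vec i} = (\vec a \subseteq \vec b)$ by a single fresh relation $R_{\vec i}$ of arity $|\vec a|\leq k$ that stores $X_{\vec i}(\vec a)$, with $\varphi$ forcing $X_{\vec i}(\vec a)\subseteq R_{\vec i}$ and a correctness clause forcing $R_{\vec i}\subseteq X_{\vec i}(\vec b)$---is sound, and it is a genuinely different route from the paper's. But your correctness clause, as written, fails, because you recycle the atom's own variables $\vec a$ as the universally quantified ``prescribed tuple'' and then require the witness to satisfy $\vec b = \vec a$. If $\vec z_{\vec i}$ lists $\{x_1,\ldots,x_n\}\setminus \vec a$, the witness assignment $s''$ built from the prescribed values and the existential witnesses has $s''(\vec a)=s''(\vec b)=$ the prescribed tuple, which is strictly stronger than inclusion (inclusion puts no constraint whatsoever on $s''(\vec a)$); if instead $\vec z_{\vec i}$ lists $\{x_1,\ldots,x_n\}\setminus\Fr(\theta_{\vec i})$ as in \eqref{Omega}, then the variables of $\vec b$ are not bound at all in your clause. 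Concretely, take $\phi = \forall x_1 x_2\, \exists y_1 y_2\, ((x_1\neq y_1 \vee x_2\neq y_2) \ja x_1x_2 \subseteq y_1y_2) \in \inc{2}$: it is true in every model with at least two elements (let $(y_1,y_2)$ be a coordinatewise cyclic successor of $(x_1,x_2)$, so that $X(y_1y_2)=M^2$), yet your $\Omega$-conjunct demands a team member satisfying both the root formula $\varphi_\lambda$, which contains the disjunction $x_1\neq y_1 \vee x_2\neq y_2$, and the added equation $y_1y_2 = x_1x_2$---unsatisfiable, so your $\Phi$ would be false in every model. The repair is small: universally quantify a \emph{fresh} tuple $\vec u$ with antecedent $R_{\vec i}(\vec u)$, existentially quantify the \emph{entire} witness assignment $x_1,\ldots,x_n$ in the consequent, and require $\vec b = \vec u$ there; both directions of the equivalence then go through exactly as in Proposition \ref{geq}.

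Two further points of comparison. First, the route you rejected as unable to control arity is precisely the paper's proof, and it does work: the paper does not apply the bare statement $\ind{k}=\ESOfarity{k}$ to the sentence rewritten via item \ref{thm:inc_ind} of Theorem \ref{two}, but runs the translation of Proposition \ref{geq} on it, under which the pure atom $\indepc{\vec z}{v_1 v_2}$ (with $|\vec z|\leq k$) is coded by \emph{two} relations, of arities $|\vec z|$ and $2$, not by one relation of arity $k+2$; hence the output lies in $\ESOfarity{\max\{k,2\}}$. This, and not your conjecture about the first-order variables of $\chi$ (first-order quantifiers never contribute to $\ESO$ arity), is the reason for the hypothesis $k\geq 2$. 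Second, your direct translation, once repaired as above, needs no such hypothesis: it gives $\inc{k}\leq\ESOfarity{k}$ for every $k\geq 1$, which is slightly stronger than the theorem as stated and is something the paper's route through independence atoms cannot deliver.
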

\begin{proof}
Using item \ref{thm:inc_ind} of Theorem \ref{two}, we first translate inclusion logic sentences to independence logic, and then apply  Proposition \ref{geq}. It is easy to check that this translation takes us to $\ESOfarity{k'}$, where $k' = \max \{k,2\}$.
\end{proof}
There is no hope of proving the other direction, since, e.g.,  even cardinality cannot be expressed in  $\FO(\subseteq)$ \cite{gallhella13}, but it is expressible in $\ESOfarity{1}$. 
Next we will show that $\ESOfarity{k}  \leq \indNR{2k+2}$.
\begin{thm}
$\ESOfarity{k}  \leq \indNR{2k+1} \leq \ESOfarity{2k+1}.$
\end{thm}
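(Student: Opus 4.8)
The plan is to prove the chain $\ESOfarity{k} \leq \indNR{2k+1} \leq \ESOfarity{2k+1}$ by establishing the two inclusions separately. The right inclusion $\indNR{2k+1} \leq \ESOfarity{2k+1}$ is essentially already available: since $\indNRlogic \subseteq \indlogic$ and a pure independence atom of arity $2k+1$ is in particular a conditional independence atom with at most $2k+2$ distinct variables, Proposition \ref{geq} (the arity-preserving translation into $\ESO$) gives the bound. One must check that the arity accounting in that translation, when applied to a pure atom $\vec{b} \bot \vec{c}$ with $|\vec{b}\vec{c}| \leq 2k+1$, produces relation symbols of arity at most $2k+1$ rather than $2k+2$; because for a \emph{pure} atom the conditioning tuple $\vec{a}$ is empty, the coding relations $S_{\vec{i}}$ and $T_{\vec{i}}$ have arities $|\vec{b}|$ and $|\vec{c}|$ respectively, each at most $2k+1$, so the bound is as claimed.

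For the left inclusion $\ESOfarity{k} \leq \indNR{2k+1}$, I would first pass through ordinary independence logic. By Theorem \ref{teoreema} we have $\ESOfarity{k} = \ind{k}$, so it suffices to simulate a conditional independence atom $\vec{b} \bot_{\vec{a}} \vec{c}$ with $|\vec{a}\vec{b}\vec{c}| \leq k+1$ distinct variables by a pure-independence formula whose pure atoms have arity at most $2k+1$. The standard device is to express conditional independence in terms of pure independence: an atom $\vec{b} \bot_{\vec{a}} \vec{c}$ can be rewritten using pure atoms on tuples that bundle the conditioning variables $\vec{a}$ together with $\vec{b}$ and with $\vec{c}$. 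Concretely one uses an equivalence of the shape $\vec{b} \bot_{\vec{a}} \vec{c} \equiv \vec{a}\vec{b} \bot \vec{a}\vec{c} \;\wedge\; (\text{a pure-independence coding of } \dep(\vec{a},\vec{a}))$, or more carefully a formula stating that $\vec{a}\vec{b}$ and $\vec{a}\vec{c}$ are independent conditioned on agreement in $\vec{a}$, which can be unfolded into pure atoms on tuples of length at most $|\vec{a}\vec{b}| + |\vec{a}\vec{c}|$. Since $|\vec{a}\vec{b}\vec{c}| \leq k+1$, the worst case $|\vec{a}\vec{b}| + |\vec{a}\vec{c}| = 2|\vec{a}| + |\vec{b}| + |\vec{c}|$ is bounded by roughly $2(k+1)$, and with the pure-atom arity convention (a $(k+1)$-ary pure atom $\vec{y}\bot\vec{z}$ counted as arity $|\vec{y}\vec{z}| - 1$, matching the indexing in $\indNR{\cdot}$) this lands within $2k+1$.

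The main obstacle I expect is the precise arity bookkeeping: showing that the pure-atom simulation of a conditional atom with $k+1$ distinct variables never forces a pure atom on more than $2k+2$ variables (hence arity $2k+1$ in the paper's convention). The subtlety is that conditional independence genuinely requires referring to the conditioning tuple $\vec{a}$ inside both independent blocks, so a naive rewriting duplicates $\vec{a}$ and can inflate the arity; one must argue that Theorem \ref{thm:DLINDNR} (the pure-independence characterization of dependence) together with Theorem \ref{two}\eqref{thm:inc_ind} lets us encode the required ``copying'' of $\vec{a}$ without independently adding to the width of the critical pure atom. I would therefore carry out the reduction in two stages — first eliminate the dependence-like behaviour of the conditioning variables via the pure characterization of $\dep$, then bound the width of the residual genuinely-independent atom — and keep a careful running count of distinct variables at each step, which is where the factor $2$ and the $+1$ in $2k+1$ come from.
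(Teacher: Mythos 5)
Your second inclusion is fine and coincides with the paper's: $\indNR{2k+1}\subseteq\ind{2k+1}$, and Proposition \ref{geq} (equivalently Theorem \ref{teoreema}) lands in $\ESOfarity{2k+1}$, with the arity accounting essentially as you describe. The gap is in the first inclusion. The equivalence you propose as its engine, $\vec{b} \bot_{\vec{a}} \vec{c} \equiv \vec{a}\vec{b} \bot \vec{a}\vec{c} \wedge \cdots$, is false no matter what the extra conjunct is: if a team satisfies $\vec{a}\vec{b} \bot \vec{a}\vec{c}$, then for any two assignments $s,s'$ the witnessing $s''$ must satisfy both $s''(\vec{a})=s(\vec{a})$ and $s''(\vec{a})=s'(\vec{a})$, so $\vec{a}$ is \emph{constant} on the team; yet $\vec{b} \bot_{\vec{a}} \vec{c}$ holds, for instance, in any team in which each value of $\vec{a}$ occurs in exactly one assignment. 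Since adding conjuncts only strengthens the right-hand side, no bookkeeping can repair this. Your fallback ("a formula stating that $\vec{a}\vec{b}$ and $\vec{a}\vec{c}$ are independent conditioned on agreement in $\vec{a}$, which can be unfolded into pure atoms") is not a construction: expressing a conditional atom by pure ones is exactly the content of V\"a\"an\"anen's theorem that $\indlogic = \indNRlogic$, that translation introduces quantifiers, and you give no argument that it respects the width bound $|\vec{a}\vec{b}|+|\vec{a}\vec{c}|$. Note also that the only conjunction-splitting available at the atom level, Lemma \ref{disjoint}, still leaves a genuine conditional atom $\vec{b_0}\bot_{\vec{a}}\vec{c_0}$, so your two-stage plan ("first eliminate the dependence-like behaviour of the conditioning variables, then bound the residual pure atom") has no valid decomposition to start from.

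The paper sidesteps conditional atoms entirely. For the left inclusion it uses $\ESOfarity{k} = \ddep{k}$ (Durand--Kontinen), so the only non-first-order atoms to eliminate are dependence atoms $\dep(\vec{z},y)$ with $|\vec{z}|\leq k$, and Theorem \ref{thm:DLINDNR} replaces each of them by $\forall \vec{z}'\, \exists w\, ((\vec{z}' = \vec{z} \rightarrow w = y)\wedge \vec{z} y \bot \vec{z}' w)$, whose single pure atom has $2|\vec{z}|+2 \leq 2k+2$ distinct variables, i.e., lies in $\indNR{2k+1}$. You in fact cite Theorem \ref{thm:DLINDNR}, but only as an auxiliary ``copying'' gadget inside the unworkable conditional-atom reduction; applied directly to the dependence-atom normal form of $\ESOfarity{k}$, it is the entire proof. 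The detour through $\ddep{k}$ rather than $\ind{k}$ is what makes the argument go through, and it is also where the factor $2$ and the $+1$ actually come from.
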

\begin{proof}
For the first inequality, note that $\ESOfarity{k}=\ddep{k}$ by \cite{durand11}, and  $\ddep{k} \leq \indNR{2k+1}$ by Theorem \ref{thm:DLINDNR}. The second inequality follows from Theorem \ref{teoreema}.
\end{proof}

\subsection{$\forall$-hierarchies}
In this section, we will examine the fragments $\FO(\mathcal C)(k\forall )$. We will prove that, contrary to the case of the fragments $\mathcal D(k\forall)$ \cite{durand11}, the following holds: 
\begin{enumerate}
\item If $\{\bot, \subseteq\} \subseteq \mathcal C$ then the hierarchy collapses at level $1$:  $\FO(\mathcal C)= \FO(\mathcal C)(1\forall)$;
\item If $\bot \in \mathcal C$ then it collapses at level 2: $\FO(\mathcal C)= \FO(\mathcal C)(2\forall)$.
\end{enumerate}

We will use the following result from \cite{vaananen13}:
\begin{prop}\label{joukotrans}
Let $\phi$ be a $\indNRlogic$ sentence. Then $\phi$ is equivalent to an formula of the form $\forall \vec{x} \exists \vec{y} (\theta \wedge \chi)$, where $\theta$ is a conjunction of pure independence atoms and $\chi$ is first-order and quantifier-free. 
\end{prop}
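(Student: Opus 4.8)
\textbf{Proof proposal for Proposition \ref{joukotrans} (the final statement).}

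The plan is to prove a prenex normal form result for pure independence logic in which the matrix becomes a conjunction of pure independence atoms with a first-order quantifier-free remainder. I would proceed in two conceptual stages: first move all quantifiers to the front, and then discipline the quantifier-free matrix so that every non-first-order atom appears positively inside a single conjunction, absorbing all the disjunctions into first-order content and into fresh existentially quantified witnesses.

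First I would apply Theorem \ref{aputulos2} to $\phi$, obtaining an equivalent $\indNRlogic$ sentence $Q^1 x_1 \ldots Q^m x_m \psi$ with $\psi$ quantifier-free and containing only the pure independence atoms already present in $\phi$. The remaining work is to reshape the quantifier-free matrix $\psi$. The main obstacle is disjunction: team semantics splits a disjunction $\alpha \tai \beta$ across a partition of the team, so a disjunction of independence atoms is genuinely weaker than a single conjunction, and I cannot simply distribute. To handle this I would introduce, for each disjunction in $\psi$, fresh existentially quantified ``tag'' variables that record into which disjunct each assignment is routed, in the spirit of item $4$ of Lemma \ref{aputulos} (where the pair $a,b$ with the condition $a=b$ versus $a\neq b$ encodes the split). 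The key point is that once the split is recorded by first-order equalities on the tag variables, each pure independence atom can be relativised so that it only constrains the relevant part of the team, and the global disjunction is replaced by a global conjunction guarded by first-order formulas about the tags. This is exactly the mechanism that lets one trade a top-level disjunction for extra $\exists$-quantifiers plus first-order bookkeeping.

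Concretely I would push all such tag quantifiers out using cases $1$ and $2$ of Lemma \ref{aputulos} (existential quantifiers commute with $\ja$ and $\tai$ when the bound variable is fresh), keeping the universal block unchanged, so that the sentence acquires the shape $\forall \vec{x} \exists \vec{y}\, \psi'$ where $\psi'$ is quantifier-free and every pure independence atom now sits inside one big conjunction, each conjunct possibly guarded by a first-order condition on tags. Finally I would separate $\psi'$ into its independence part and its first-order part: collecting the guarded independence atoms into $\theta$ (a conjunction of pure independence atoms, where the guards are absorbed by conditioning the atom or by the flatness of the first-order guards, using Theorem \ref{flatness}) and the purely first-order literals and guard conditions into a quantifier-free first-order $\chi$. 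This yields the desired form $\forall \vec{x} \exists \vec{y}(\theta \wedge \chi)$.

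The step I expect to be genuinely delicate is verifying that the tag-encoding of disjunctions is semantically faithful in \emph{both} directions under lax team semantics: one must check that an arbitrary partition witnessing a disjunction can always be recovered from some choice of tag values (the left-to-right direction), and conversely that any assignment of tags induces a legitimate partition realising the disjunction (the right-to-left direction), and that this remains correct when several disjunctions are nested and encoded simultaneously. Locality (Proposition \ref{thm:loc}) and the empty-team property (Proposition \ref{thm:etp}) will be used repeatedly to ensure the fresh variables do not interfere with satisfaction. Since this proposition is quoted from \cite{vaananen13}, I would present the reduction via Theorem \ref{aputulos2} and Lemma \ref{aputulos} as the skeleton and rely on the cited result for the full verification.
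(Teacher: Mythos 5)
The paper itself offers no proof of Proposition \ref{joukotrans}: it is imported wholesale from \cite{vaananen13}, so your argument has to stand on its own, and it has two genuine gaps. The first concerns the quantifier prefix. Theorem \ref{aputulos2} only yields a prenex sentence $Q^1 x_1 \ldots Q^m x_m \psi$ with an \emph{arbitrary} alternation of quantifiers, whereas the target form demands every universal before every existential; your sketch silently passes from the former to ``$\forall \vec{x}\exists\vec{y}\,\psi'$'' without any swapping device. This step cannot be done with pure independence atoms alone: the candidate rule $\exists y \forall x\, \psi \equiv \forall x \exists y\,(y \bot x \wedge \psi)$ is unsound, because $y \bot x$ only asserts a global product of the $(y,x)$-values, not that $y$ is chosen independently of $x$ given the earlier variables. (Take $M=\{1,2\}$ with named elements, a team with one free variable $u$ taking both values, and $\psi$ quantifier-free first-order forcing $y=x$ where $u=1$ and $y\neq x$ where $u=2$: the right-hand formula is satisfied by the ``twisted'' team whose $(y,x)$-projection is all of $M^2$, while the left-hand formula fails.) The sound swap needs a dependence atom $\dep(\vec{u},y)$ over the previously introduced variables --- i.e.\ a conditional, not pure, independence atom --- and to stay inside $\indNRlogic$ one must then unfold it via Theorem \ref{thm:DLINDNR}, at the cost of further quantifiers that must themselves be merged into the blocks. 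None of this machinery appears in your plan, and it is the crux of the proposition.

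Second, your mechanism for eliminating disjunctions is described backwards. A pure independence atom ``relativised so that it only constrains the relevant part of the team'' is not an atom of the logic at all, and flatness (Theorem \ref{flatness}) cannot absorb a guard into a $\bot$-atom, since flatness holds only for first-order formulas. The trick that actually works goes the other way: for an atom $\vec y \bot \vec z$ occurring in the disjunct tagged by $a=b$, one introduces fresh existentially quantified variables $\vec y{\,}'\vec z{\,}'$, puts the first-order guard $a=b \rightarrow \vec y{\,}'\vec z{\,}' = \vec y \vec z$ into $\chi$, and puts the \emph{unguarded} atom $\vec y{\,}' \bot \vec z{\,}'$ into $\theta$; one must then verify that the values of $\vec y{\,}'\vec z{\,}'$ outside the tagged part can always be completed (e.g.\ constantly, repeating a single tuple from the tagged part, with the empty tagged part as a degenerate case) so that the whole team satisfies the product condition. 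That verification --- not locality or the empty team property --- is the real content of the matrix normalization, and it is absent. Finally, your closing move of ``relying on the cited result for the full verification'' is circular, since the cited result is precisely the statement to be proved; as written, the proposal assembles plausible ingredients from the paper but does not constitute a proof.
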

Since, as we saw in the Preliminaries,  we can define inclusion atoms and conditional independence atoms in terms of pure independence atoms, it follows at once that any sentence of $\FO(\dep(\ldots), \bot_{\rm c}, \subseteq)$ is equivalent to some sentence of the above form.

Using this, we will prove that
\begin{thm}
$\FO(\bot, \subseteq)(1\forall) = \FO(\dep(\ldots), \bot_{\rm c}, \subseteq)$.
\end{thm}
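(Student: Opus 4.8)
The plan is to prove the two inclusions $\FO(\bot,\subseteq)(1\forall) \leq \FO(\dep(\ldots),\bot_{\rm c},\subseteq)$ and $\FO(\dep(\ldots),\bot_{\rm c},\subseteq) \leq \FO(\bot,\subseteq)(1\forall)$. The first direction is trivial, since $\FO(\bot,\subseteq)(1\forall)$ is syntactically a fragment of the richer logic $\FO(\dep(\ldots),\bot_{\rm c},\subseteq)$ (recall that pure atoms $\bot$ are special cases of conditional independence atoms $\bot_{\rm c}$). So the entire content lies in the second, non-trivial inclusion: showing that \emph{every} sentence using dependence, conditional independence, and inclusion atoms with arbitrarily many universal quantifiers can be compressed into a sentence using only pure independence and inclusion atoms and a \emph{single} universal quantifier.

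The key tool is Proposition \ref{joukotrans}: by the remark immediately following it, any sentence $\phi \in \FO(\dep(\ldots),\bot_{\rm c},\subseteq)$ is equivalent to a sentence in the normal form $\forall \vec{x} \exists \vec{y}(\theta \wedge \chi)$, where $\theta$ is a conjunction of pure independence atoms and $\chi$ is quantifier-free first-order. So it suffices to compress the block of universal quantifiers $\forall \vec{x} = \forall x_1 \ldots \forall x_m$ down to a single universal quantifier $\forall x$, at the cost of introducing new inclusion and pure independence atoms. The natural strategy is to encode the $m$-tuple quantified by $\forall \vec{x}$ using a single universally quantified variable $x$ that ranges over the domain, together with existentially quantified auxiliary variables that ``unpack'' $x$ into the coordinates. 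First I would introduce a single $\forall x$ and then existentially guess variables $x_1,\ldots,x_m$; the difficulty is that $\exists$ under lax semantics is a nondeterministic choice, so I must \emph{force} the guessed $x_1,\ldots,x_m$ to enumerate all of $M^m$ as $x$ ranges over $M$. This is exactly where inclusion and pure independence atoms earn their keep: an inclusion atom of the form (something like) $x_i \subseteq x$ can force each guessed coordinate to take \emph{every} value in the domain, and a pure independence atom $x_1 \bot x_2 \bot \cdots$ (built up by conjoining $x_{i+1} \bot x_1\ldots x_i$) can force the coordinates to vary \emph{independently}, so that together the tuple $(x_1,\ldots,x_m)$ ranges over the full Cartesian product $M^m$, mimicking $\forall x_1\ldots\forall x_m$.

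Concretely, I expect the proof to replace $\forall x_1 \ldots \forall x_m \exists \vec{y}(\theta\wedge\chi)$ by something of the form $\forall x \,\exists x_1 \ldots \exists x_m \exists \vec{y}\,(\eta \wedge \theta \wedge \chi)$, where $\eta$ is a conjunction of inclusion atoms $x_i \subseteq x$ (ensuring surjectivity of each coordinate onto $M$) and pure independence atoms ensuring mutual independence of the $x_i$. The correctness argument in each direction must verify that the team produced after the single $\forall x$ and the existential unpacking coincides (up to the relevant variables, invoking locality, Proposition \ref{thm:loc}) with the team $X[M/x_1]\ldots[M/x_m]$ produced by the original universal block. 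The main obstacle — and the step I would spend the most care on — is proving that the independence and inclusion constraints genuinely force the unpacked team to be $\vec{x}$-universal in the lax setting; one has to check that no ``collapse'' of coordinates is permitted and that every tuple in $M^m$ is realized, while simultaneously keeping $\vec{y}$, $\theta$, and $\chi$ undisturbed. A subtlety to watch is the interaction with the existential quantifiers for $\vec{y}$: the guessing of $\vec{y}$ must still be allowed to depend on the full tuple $(x_1,\ldots,x_m)$ exactly as before, so the encoding of the universal block must be arranged so that it does not inadvertently constrain the strategy for $\vec{y}$.
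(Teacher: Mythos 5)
Your proposal follows essentially the same route as the paper: reduce to the normal form of Proposition \ref{joukotrans}, then simulate the universal block $\forall x_1 \ldots \forall x_m$ by a single universal quantifier together with existentially quantified coordinates whose values are forced by inclusion atoms to exhaust $M$ and by pure independence atoms $x_1 \ldots x_i \bot x_{i+1}$ to range over the full Cartesian product; the paper's correctness argument is exactly the induction you sketch (showing the unpacked team coincides with $\{\emptyset\}[M/x_1]\ldots[M/x_m]$, with locality protecting $\vec{y}$, $\theta$ and $\chi$). The differences are mostly cosmetic: the paper reuses $x_1$ as the universally quantified variable and existentially quantifies only $x_2,\ldots,x_m$, adding the constraints $\bigwedge_{2 \leq i \leq m}(x_1 \subseteq x_i \ja x_1\ldots x_{i-1} \bot x_i)$, whereas you introduce a fresh $\forall x$. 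There is, however, one slip you must fix: your inclusion atoms point the wrong way. By TS-inc, $x_i \subseteq x$ asserts $X(x_i) \subseteq X(x)$, which is vacuously true because the universally quantified $x$ already takes every value in $M$; it forces nothing about $x_i$. What you need is $x \subseteq x_i$ (the paper's $x_1 \subseteq x_i$), asserting $M = X(x) \subseteq X(x_i)$, which is precisely what makes each existentially guessed coordinate surjective onto $M$. With the atom as written, the translation is strictly weaker than the original sentence: for instance $\forall x_1 \neg P(x_1)$ would become $\forall x \exists x_1(x_1 \subseteq x \wedge \neg P(x_1))$, which is true in any model with $P \neq M$ by choosing $x_1$ to be a constant outside $P$. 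Since you state the intended effect correctly (``force each guessed coordinate to take every value in the domain'') and hedge the exact form of the atom, this is a repairable slip rather than a conceptual gap, but it sits exactly at the crux of why the encoding works, so it is worth being precise about.
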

\begin{proof}
Let $\phi \in \FO(\dep(\ldots), \bot_{\rm c}, \subseteq)$. We will show that there exists a $\phi'\in \FO(\bot, \subseteq)(1\forall)$ equivalent to it.  As we said, we can assume that $\phi$ is of the form
$\forall x_1 \ldots \forall x_m \on x_{m+1} \ldots \on x_{m+n} (\theta \wedge \chi),$
where $\theta$ is a conjunction of pure independence atoms and $\chi$ is first-order and quantifier-free. Let us then define $\phi'$ as
$$\forall x_1 \on x_2 \ldots \on x_m \on x_{m+1} \ldots \on x_{m+n} (\bigwedge_{2 \leq i \leq m}(x_1 \subseteq x_i \ja x_1\ldots x_{i-1} \bot x_i) \ja \theta \wedge \chi).$$
We claim that $\phi'$ is equivalent to $\phi$. Assume first that $\M \models \phi$. Then there are, for $m+1 \leq i \leq m+n$, functions
$$F_i: \{ \emptyset \}[M/x_1]\ldots [M/x_n][F_{m+1}/x_{m+1}]\ldots [F_{i-1}/x_{i-1}] \rightarrow \Po (M) \setminus \{\emptyset\}$$
such that $\M \models_X \theta \wedge \chi$ when $X:=[M/x_1]\ldots [M/x_n][F_{m+1}/x_{m+1}]\ldots [F_{m+n}/x_{m+n}]$. Let $F_i$, for $2 \leq i \leq m$, be the constant function mapping each assignment to $M$. Then
$$X=\{ \emptyset \}[M/x_1][F_2/x_2]\ldots [F_{m+n}/x_{m+n}].$$
Clearly $\M \models_X \bigwedge_{2 \leq i \leq m}(x_1 \subseteq x_i \ja x_1\ldots x_{i-1} \bot x_i)$, and hence $\M \models \phi'$.

For the other direction, assume that $\M \models \phi'$. Then there are, for $2 \leq i \leq m+n$, functions
$$F_i: \{ \emptyset \}[M/x_1][F_2/x_2]\ldots [F_{i-1}/x_{i-1}] \rightarrow \Po (M) \setminus \{\emptyset\}$$
such that $\M \models_X \bigwedge_{2 \leq i \leq m}(x_1 \subseteq x_i \ja x_1\ldots x_{i-1} \bot x_i)$ when $X:=\{ \emptyset \}[M/x_1][F_2/x_2]\ldots [F_{m+n}/x_{m+n}]$. Define, for $2 \leq i \leq m$, $X_i:= \{ \emptyset \}[M/x_1][F_2/x_2]\ldots [F_{i}/x_{i}]$ and $Y_i:=\{ \emptyset \}[M/x_1][M/x_2]\ldots [M/x_i]$. It suffices to show that $X_i=Y_i$ for $ 2 \leq i \leq m$. 

First let us prove the claim for $i=2$. Let $s \in Y_2$. It suffices to show that $s \in X_2$. By Proposition \ref{thm:loc}, $\M \models_{X_2} x_1 \subseteq x_2 \ja x_1 \bot x_2$. Let $s'\in X_2$ be such that $s'(x_1)=s(x_2)$. Since $\M \models_{X_2} x_1 \subseteq x_2$, we can find a $t\in X_2$ such that $t(x_2)=s'(x_1)$. Now let $t' \in X_2$ be such that $t'(x_1)=s(x_1)$. Because $\M \models_{X_2} x_1 \bot x_2$, we can find a $t'' \in X_2$ such that $t''(x_1)=t'(x_1)$ and $t''(x_2)=t(x_2)$. Then $t''=s$ which concludes the claim for $i=2$.

The induction step is proved analogously. This concludes the claim and the proof.
\end{proof}

Let us now prove our second claim.
\begin{thm}
$\FO(\bot)(2\forall) = \FO(\dep(\ldots), \bot_{\rm c}, \subseteq)$.
%
\end{thm}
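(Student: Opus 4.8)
The goal is to show $\FO(\bot)(2\forall) = \FO(\dep(\ldots), \bot_{\rm c}, \subseteq)$, i.e.\ that every sentence of the full logic can be expressed using pure independence atoms and at most two universal quantifiers. By Proposition \ref{joukotrans} and the remark following it, I may assume the given sentence is of the form
\[
\forall x_1 \ldots \forall x_m \on x_{m+1} \ldots \on x_{m+n} (\theta \wedge \chi),
\]
where $\theta$ is a conjunction of pure independence atoms and $\chi$ is first-order quantifier-free. The plan is to replace the $m$ universal quantifiers, which in team semantics force the team to range over all of $M^m$, by \emph{two} universal quantifiers together with existential quantifiers whose witnessing team is constrained to behave like a full Cartesian power of $M$. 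The key idea is to build up an $\vec{x}$-universal-style team one coordinate at a time using pure independence atoms, exactly as in the $1\forall$ proof above, but here the extra universal variable is needed to enforce that each newly existentially quantified coordinate genuinely takes \emph{all} values of $M$ and is independent of the previous coordinates.

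Concretely, I would introduce $\phi'$ of the form
\[
\forall y_1 \forall y_2\, \on x_1 \ldots \on x_m\, \on x_{m+1}\ldots \on x_{m+n} \left( \bigwedge_{1 \leq i \leq m} \alpha_i \;\ja\; \theta \wedge \chi\right),
\]
where each $\alpha_i$ is a conjunction of inclusion-like conditions (expressible via pure independence atoms using item \ref{thm:inc_ind} of Theorem \ref{two}, though one must check this does not reintroduce universal quantifiers beyond the two allotted) and pure independence atoms $x_1 \ldots x_{i-1} \bot x_i$ that force the subteam projected to $x_1,\ldots,x_i$ to equal the full power $\{\emptyset\}[M/x_1]\ldots[M/x_i]$. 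The two universal variables $y_1, y_2$ play the role of ``scanning'' the domain: the inclusion conditions $\ldots \subseteq \ldots$ together with independence guarantee surjectivity of each new coordinate onto $M$, while independence guarantees that the coordinates are chosen freely of one another, so that the cumulative team is genuinely $\{x_1,\ldots,x_i\}$-universal.

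The correctness argument splits into the two standard directions. For the forward direction, starting from $\M \models \phi$, I would take the witnessing functions for $x_{m+1},\ldots,x_{m+n}$ and extend them by constant functions returning $M$ for $x_1,\ldots,x_m$, then verify that the resulting full-power team satisfies each $\alpha_i$ (this is immediate, as inclusion and independence atoms hold trivially on Cartesian-power teams) as well as $\theta \wedge \chi$. For the converse, assuming $\M \models \phi'$, I would argue by induction on $i$ that the team restricted to $\{x_1,\ldots,x_i\}$ coincides with $\{\emptyset\}[M/x_1]\ldots[M/x_i]$, using the atoms $\alpha_i$ exactly as in the $i=2$ base case of the previous theorem: inclusion supplies each required value and independence lets one recombine coordinate values into a single assignment. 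Once the team is shown to be the full power over $x_1,\ldots,x_m$, satisfaction of $\theta\wedge\chi$ transfers back to $\phi$ by Proposition \ref{thm:loc} (locality).

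The main obstacle I anticipate is the careful bookkeeping to ensure that only \emph{two} universal quantifiers suffice: the translation of inclusion atoms into pure independence atoms (Theorem \ref{two}\ref{thm:inc_ind}) itself introduces universal quantifiers, so one cannot naively nest a universally quantified inclusion inside the prefix without blowing up the count. The real work is to design the $\alpha_i$ so that the two outer universals $y_1,y_2$ are \emph{shared} across all the coordinate-building conditions---reused rather than re-introduced for each $i$---and to confirm that the pure-independence encoding of the needed inclusions fits within this budget. Getting the ordering of the independence atoms $x_1\ldots x_{i-1} \bot x_i$ right, so that the inductive recombination step always finds the witnessing assignment inside the current team, is the delicate point; everything else is a locality-plus-flatness routine.
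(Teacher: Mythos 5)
Your proposal has a genuine gap: the formulas $\alpha_i$, which carry the entire weight of the argument, are never constructed, and the one route you indicate for constructing them fails. The encoding of an inclusion atom by pure independence atoms (Theorem \ref{two}, item \ref{thm:inc_ind}) wraps the independence atom in a block of universal quantifiers $\forall v_1 v_2 \vec{z}$ that must sit \emph{inside} the scope of the existential quantifiers $\exists x_1 \ldots \exists x_{m+n}$, since the inclusion condition constrains the existentially chosen values. These inner universals cannot be identified with your two outer variables $y_1, y_2$: in team semantics $\exists x \forall v\, \psi$ entails $\forall v \exists x\, \psi$ but not conversely, so hoisting the universals in front of the existentials replaces the inclusion constraint by a strictly weaker condition, and the converse direction of your equivalence proof (showing the witnessing team restricted to $x_1,\ldots,x_m$ is the full power $\{\emptyset\}[M/x_1]\ldots[M/x_m]$) collapses. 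Prenexing by Lemma \ref{aputulos} / Theorem \ref{aputulos2} preserves the number of universal quantifiers, so any faithful use of this encoding leaves you with far more than two universals. You correctly flag this as ``the real work,'' but the work is not done, and no sharing scheme can do it along these lines, because each coordinate's inclusion condition needs its universals to scope over a \emph{later} existential variable.

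The paper's proof avoids inclusion conditions altogether, and this is the missing idea. Writing $\phi = \forall \vec{x}\, \psi$ with $\vec{x} = x_1 \ldots x_n$, it takes two fresh variables $p,q$ and sets
\[
\phi^* = \forall p \forall q \exists \vec{x}\left( \left(p = q \rightarrow \bigwedge_{i=1}^n x_i = p\right) \wedge \bigwedge_{i=1}^{n-1} (x_1 \ldots x_i \bot x_{i+1}) \wedge \psi\right).
\]
Surjectivity of every coordinate onto $M$ — the job your inclusion-like conditions were meant to do — is obtained by a purely first-order ``diagonal'' clause: for each $m \in M$ the team contains assignments with $p = q = m$, and on these the implication forces $x_i = m$ for every $i$, so each element $m$ is realized simultaneously in all coordinates. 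The chain of pure independence atoms $x_1 \ldots x_i \bot x_{i+1}$ then recombines these diagonal witnesses, by induction on $i$, into an assignment realizing any prescribed tuple $m_1 \ldots m_n$; hence the team restricted to $\vec{x}$ is exactly $\{\emptyset\}[M/\vec{x}]$, and locality (Proposition \ref{thm:loc}) transfers satisfaction of $\psi$ back to $\forall \vec{x}\, \psi$. The clause $p = q \rightarrow \bigwedge_i x_i = p$ costs nothing beyond equality atoms, which is precisely how the construction stays inside $\FO(\bot)(2\forall)$; your inductive recombination step and the forward direction are otherwise essentially the same as the paper's.
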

\begin{proof}
Let $\phi \in \FO(\dep(\ldots), \bot_{\rm c}, \subseteq)$. Again, we can assume that $\phi$ is of the form  $\forall \vec{x} \psi$, where $\vec{x} = x_1 \ldots x_n$ and $\psi$ is of the form $\exists \vec y \theta$ for $\theta$ quantifier-free and in $\FO(\bot)$. Let now $p, q$ be two variables not occurring in $\phi$. We state that $\phi$ is equivalent to 
\[
	\phi^* = \forall p \forall q \exists \vec{x} \left( \left(p = q \rightarrow \bigwedge_{i=1}^n x_i = p\right) \wedge \bigwedge_{i=1}^{n-1} (x_1 \ldots x_i \bot x_{i+1}) \wedge \psi\right).
\]

Indeed, let $\M$ be a model and  $X = \{\emptyset\}[M/p][M/q]$, and let the tuple of (nondeterministic) choice functions $\vec{U}$ for $\vec{x}$ be such that 
\[
	\vec{U}(s) = \left\{
		\begin{array}{l l}
			(m,\ldots,m) & \mbox{ if } s(p) = s(q) = m;\\
			M^n & \mbox{ otherwise}
		\end{array}
	\right.
\]
and let $Y = X[\vec{U}/\vec{x}]$. It is obvious that $\M \models_Y \left(p = q \rightarrow \bigwedge_{i=1}^n x_i = p\right)$; and $\M \models_Y \psi$, because $Y(\vec{x}) = M^n$ and $p$, $q$ do not occur in $\psi$. Finally, it is also true that $Y$ satisfies all independence atoms $x_1 \ldots x_i \bot x_{i+1}$, since $Y(x_1 \ldots x_i x_{i+1}) = M^{i+1}$ (assuming that our model contains two distinct elements). Therefore $\M \models \phi^*$, as required.

Conversely, suppose that $\M \models \phi^*$: then there exists a $\vec{U}$ such that, for $Y = \{\emptyset\}[M/pq][\vec{U}/\vec{x}]$, $\M \models_Y \left(p = q \rightarrow \bigwedge_{i=1}^n x_i = p\right) \wedge \bigwedge_{i=1}^{n-1} (x_1 \ldots x_i \bot x_{i+1}) \wedge \psi$. We will show that $Y(x_1 \ldots x_n)$ is $M^n$, that is, that all possible tuples $m_1 \ldots m_n$ of elements of our models are possible values for $x_1 \ldots x_n$ in $Y$. 

First of all, let us observe that for all $m \in M$ there exists a $h^m \in Y$ such that $h^m(x_i) = m$ for all $i$. Indeed, we can find a $s^m \in X$ such that $s^m(p) = s^m(q) = m$ and then pick an arbitrary $h^m \in s^m[\vec{U}/\vec{x}] \subseteq Y$. Since $\M \models_Y p = q \rightarrow \bigwedge_i x_i = p$, we have at once that $h^m(x_i) = h^m(p) = m$, as required. 

Now we prove, by induction on $i = 1 \ldots n$, that there exists a $h_i \in Y$ such that $h_i(x_1 \ldots x_i) = m_1 \ldots m_i$. 
\begin{description}
\item[Base Case:] Let $h_1$ be $h^{m_1} \in Y$. Then $h^{m_1}(x_1) = m_1$, as required.
\item[Induction Case:] Suppose that $h_i(x_1 \ldots x_i) = m_1 \ldots m_i$, and consider $h^{m_{i+1}}$. As we saw, $h^{m_{i+1}} \in Y$ and $h^{m_{i+1}}(x_{i+1}) = m_{i+1}$. But $\M \models_Y x_1 \ldots x_i \bot x_{i+1}$; and therefore there exists a $h_{i+1} \in Y$ with $h_{i+1}(x_1 \ldots x_i) = h_i(x_1 \ldots x_i) = m_1 \ldots m_i$ and $h_{i+1}(x_{i+1}) = h^{m_{i+1}}(x_{i+1}) = m_{i+1}$. Hence, $h_{i+1}(x_1 \ldots x_{i+1}) = m_1 \ldots m_{i+1}$. 
\end{description}
In particular, this implies that $h_n(x_1 \ldots x_n) = m_1 \ldots m_n$; and since we started from an arbitrary choice of $m_1 \ldots m_n$, we can conclude that $Y(\vec{x}) = M^{|\vec{x}|}$. But then the restriction of $Y$ to $\vec{x}$ is precisely $\{\emptyset\}[M/\vec{x}]$; and since $\M \models_Y \psi$, by locality we have that $\M \models \forall \vec{x} \psi$, as required.
\end{proof}
%

\section*{Conclusion}
In this paper, we examined the expressive power of fragments of inclusion and independence logic obtained by restricting the arity of non first-order atoms or the number of universal quantifiers. For the first kind of restriction, we adapted and extended the hierarchy theorems of \cite{durand11} to this new setting; but for the second kind of restriction, we showed that the hierarchy collapses at a very low level if our logic contains at least pure independence atoms. 

A question which is still open is whether the fragments $\FO(\subseteq)(k\forall)$ of  inclusion logic give rise to an infinite expressivity hierarchy. Another issue that requires further investigation is to which degree our results can be adapted to the case of strict semantics. The exact nature of the relationship between strict and lax semantics is a matter which is of no small interest for the further development of the area, and a comparison of the properties of our fragments in these two settings might prove itself of great value. 

%
%
%






\bibliography{biblio}







\end{document}